\documentclass[letterpaper, 11pt,reqno]{amsart}
\usepackage{mathtools}
\usepackage{amsmath}
\usepackage{amssymb}
\usepackage{yhmath}
\usepackage{graphicx,overpic}
\usepackage{mathrsfs}
\usepackage{bbm}
\usepackage[dvipsnames]{xcolor}
\usepackage{tikz-cd}
\usepackage{tikz}
\usetikzlibrary{patterns}
\usepackage[
    hypertexnames=false,
    colorlinks=true,    
    linkcolor=NavyBlue,
    hypertexnames=false,
    filecolor=black,      
    urlcolor=blue,
    citecolor=OrangeRed,
    ]{hyperref}

\usepackage{verbatim}
\usepackage{float}

\DeclareMathAlphabet{\mathpzc}{OT1}{pzc}{m}{it}

\usepackage{thmtools}
\usepackage{thm-restate}

\usepackage{caption}

\usepackage[]{cleveref}
\newtheorem{theorem}{Theorem}[section]

\newtheorem*{claim*}{Claim}
\newtheorem{claim}{Claim}

\newtheorem{lemma}[theorem]{Lemma}

\newtheorem{corollary}[theorem]{Corollary}

\newtheorem{proposition}[theorem]{Proposition}

\theoremstyle{definition}

\theoremstyle{remark}

\newtheorem{Rmk}[theorem]{Remark}

\numberwithin{equation}{section}

\newcommand{\norm}[1]{\lVert#1\rVert}

\newcommand{\op}{\operatorname}

\newcommand{\be}{\begin{equation}}
\newcommand{\ee}{\end{equation}}
\newcommand{\Ga}{\Gamma}

\newcommand{\Z}{\mathbb Z}
\newcommand{\N}{\mathbb N}
\newcommand{\ga}{\gamma}

\newcommand{\la}{\lambda}

\newcommand{\inte}{\op{int}}

\newcommand{\cal}{\mathcal}
\newcommand{\br}{\mathbb R}

\renewcommand{\frak}{\mathfrak}

\newcommand{\e}{\varepsilon}

\renewcommand{\L}{\mathcal L}
\newcommand{\fa}{\mathfrak a}

\renewcommand{\i}{\op{i}}

\newcommand{\GL}{\op{SL}}

\newcommand{\fg}{\frak g}

\newcommand{\diag}{\op{diag}}

\renewcommand{\epsilon}{\e}

\newcommand{\SL}{\op{SL}}

\renewcommand{\L}{\mathcal L}

\renewcommand{\i}{\op{i}}
\newcommand{\cc}{\mathsf c}
\newcommand{\Dirac}{\mathsf D}

\title[Directional growth of coamenable normal subgroups]{Directional growth of coamenable normal subgroups: counterexamples and rigidity}
\author{Subhadip Dey}
\address{School of Mathematics, Tata Institute of Fundamental Research, Mumbai, India}
\email{subhadip@math.tifr.res.in}

\author{Hee Oh}
\address{Department of Mathematics, Yale University, New Haven, CT}
\email{hee.oh@yale.edu}
\thanks{
 Oh is partially supported by the NSF grant No. DMS-2450703.}
\author{Konstantinos Tsouvalas}
\address{Max Planck Institute for Mathematics in the Sciences in Leipzig,
    Inselstra{\ss}e 22,
    04103 Leipzig,
    Germany
}
\email{konstantinos.tsouvalas@mis.mpg.de}

\makeatletter
\let\@wraptoccontribs\wraptoccontribs
\makeatother

\begin{document}
\begin{abstract}
Roblin's theorem asserts that, in rank one, a coamenable normal subgroup
has the same critical exponent as its ambient group. Natural higher-rank
analogues would predict that coamenability preserves the limit cone and
the growth indicator. We show that both assertions fail, even when the
quotient is infinite cyclic. For every odd integer $n\geq 3$, we construct
a nonempty open family of Zariski-dense Borel--Anosov Schottky subgroups
of $\SL_n(\mathbb R)$ admitting cocyclic normal subgroups with strictly
smaller limit cones. Moreover, in $\SL_3(\mathbb R)$, we construct
cocyclic pairs with the same limit cone but distinct growth indicators
at an interior direction.

We then identify the precise rigidity that survives. Let $\Gamma$ be a
Zariski-dense Borel--Anosov subgroup of a connected semisimple real
algebraic group, and let $N\lhd\Gamma$ be coamenable. Then the growth
indicators of $N$ and $\Gamma$ agree on the fixed-point set of the
opposition involution, and their Riemannian critical exponents are equal.
The examples with equal limit cones show that the restriction to
opposition-invariant directions is sharp.
\end{abstract}

\maketitle


\section{Introduction}
A theorem of Roblin \cite{Roblin05} gives a robust form of growth rigidity in rank one:
if $\Gamma$ is a non-elementary discrete group of isometries of a negatively
curved symmetric space and $N\lhd\Gamma$ is coamenable, then their critical exponents coincide:
\[
    \delta_N=\delta_\Gamma.
\]
Here coamenability means that $\Gamma/N$ is amenable, or equivalently that
there is a $\Gamma$-invariant mean on $\ell^\infty(\Gamma/N)$. The purpose
of this paper is to determine what remains of this statement in higher
rank, where orbit growth is inherently directional.

Let $G$ be a connected semisimple real algebraic group, and let $(X,d)$ be
its Riemannian symmetric space. Fix a Cartan decomposition
\[
    G=K\exp(\fa^+)K,
\]
where $K$ is a maximal compact subgroup and $\fa^+$ is a positive Weyl
chamber. For $g\in G$, let $\mu(g)\in\fa^+$ be its Cartan projection. If
$o=[K]\in X=G/K$, then
\[
    d(o,go)=\norm{\mu(g)}.
\]
Thus $\mu(g)$ is a vector-valued displacement. The ordinary Riemannian
critical exponent records only its norm:
\begin{equation}\label{delta}
    \delta_\Gamma
    :=
    \limsup_{T\to\infty}
    \frac1T
    \log\#\{\gamma\in\Gamma:d(o,\gamma o)<T\}.
\end{equation}

Two basic invariants retain the directional information not contained in \eqref{delta}. The first is the limit cone
\begin{equation}\label{lc}
    \L_\Gamma
    =
    \left\{
    \lim_i t_i\mu(\gamma_i)\in\fa^+:
    t_i\to0,\ \gamma_i\in\Gamma
    \right\}.
\end{equation}
It records the asymptotic Cartan directions of elements of $\Gamma$; when
$\Gamma$ is Zariski dense, $\L_\Gamma$ is a convex cone with nonempty
interior \cite{Benoist97}. The second is the growth indicator
$\psi_\Gamma$, which records the exponential growth rate in each
direction in $\fa^+$. For $v\in\fa^+-\{0\}$,
\begin{equation}\label{gr}
\psi_\Gamma(v)
=
\norm{v}\inf_{\mathcal C\ni v}
\limsup_{T\to\infty}
\frac1T
\log\#\left\{\gamma\in\Gamma:
\norm{\mu(\gamma)}<T,\ \mu(\gamma)\in\mathcal C\right\},
\end{equation}
where the infimum is taken over open cones $\mathcal C\subset\fa^+$ containing
$v$, and $\psi_\Gamma(0)=0$. Its support is precisely $\L_\Gamma$.

A naive higher-rank extension of Roblin's theorem would assert that a
coamenable normal subgroup has the same limit cone and the same growth
indicator as its ambient group. These equalities were asserted in
Glorieux--Tapie \cite{GTa}. Our first results show that neither assertion is
true, even for cocyclic normal subgroups of Borel-Anosov Schottky groups.
The failure occurs in two logically distinct ways: the support of the
directional growth may shrink, and the growth rate may change even when the
support does not.

\subsection*{Robust failure of limit-cone rigidity}
We first construct an open family in which the limit cone of the normal
subgroup is strictly smaller. Let $F_2$ be the free group on two generators,
and let $\mathcal N$ be the normal closure of one of them.

\begin{restatable}{theorem}{LimitConeCounterexample}\label{m}
For every odd integer $n\ge3$, there exists a nonempty open subset
\[
    \Omega\subset\operatorname{Hom}(F_2,\SL_n(\mathbb R))
\]
such that, for every $\rho\in\Omega$,
\begin{enumerate}
\item $\rho(F_2)$ is a Zariski-dense Borel-Anosov subgroup of
$\SL_n(\mathbb R)$;
\item $\rho(F_2)/\rho(\mathcal N)\simeq\mathbb Z$;
\item $\L_{\rho(\mathcal N)}\subsetneq\L_{\rho(F_2)}$.
\end{enumerate}
\end{restatable}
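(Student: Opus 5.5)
The plan is to use Borel--Anosov Schottky groups on two generators. In such groups the Cartan projection is \emph{almost additive} along reduced words. Membership in $\mathcal N$ forces the $b$- and $b^{-1}$-letters to appear in balanced numbers. The opposition involution $\i$ then separates the direction of $\mu(b)$ from the directions achievable in $\mathcal N$.

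Write $F_2=\langle a,b\rangle$, so that $\mathcal N$ is the normal closure of $a$. A reduced word $w$ lies in $\mathcal N$ exactly when its total $b$-exponent is $0$, and then $F_2/\mathcal N\simeq\mathbb Z$ via the $b$-exponent.

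First I fix loxodromic elements $A,B\in\SL_n(\mathbb R)$ whose attracting and repelling flags are in general position. I then pass to high powers $\rho(a)=A^k$, $\rho(b)=B^k$.

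The key quantitative input is a standard ping-pong estimate for products of transverse, uniformly contracting elements. There is a constant $C$, depending only on the flags and not on $k$, such that for every reduced word $w=s_1\cdots s_m$ in $\{a^{\pm1},b^{\pm1}\}$,
\[
\Bigl\|\mu(\rho(w))-\sum_{j=1}^m \mu(\rho(s_j))\Bigr\|\le Cm ,
\]
and the same holds with Jordan projections for cyclically reduced words. Since $\|\mu(\rho(s_j))\|\asymp k$, the relative error per letter is $O(1/k)$.

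Recall that $\mu(g^{-1})=\i\mu(g)$. Set $x=\lambda(A)$ and $y=\lambda(B)$. The estimate gives two inclusions, where $(\cdot)_\epsilon$ denotes an $\epsilon$-neighbourhood of the projectivized cone and $\epsilon=O(1/k)$:
\[
\L_{\rho(F_2)}\subset \bigl(\op{cone}\{x,\i x,y,\i y\}\bigr)_\epsilon ,
\qquad
\L_{\rho(\mathcal N)}\subset \bigl(\op{cone}\{x,\i x,\,y+\i y\}\bigr)_\epsilon .
\]
For the second inclusion, in a word of $\mathcal N$ the $b$-letters and $b^{-1}$-letters occur equally often. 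Their contributions therefore pair up into multiples of $k(y+\i y)$, and the $a^{\pm1}$-letters contribute nonnegative combinations of $kx$ and $k\,\i x$. On the other side, $y\in\L_{\rho(F_2)}$ because $\rho(b)^m$ has Jordan projection $mky$.

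It therefore suffices to choose $x,y$ so that $y\notin\op{cone}\{x,\i x,y+\i y\}$, with a definite angular margin. Then for $k$ large the $\epsilon$-neighbourhood still misses $y$, which gives $\L_{\rho(\mathcal N)}\subsetneq\L_{\rho(F_2)}$. A concrete choice is to take $x$ to be $\i$-fixed and $y$ close to a wall of $\fa^+$ and far from $\op{Fix}(\i)$. Then $\op{cone}\{x,\i x,y+\i y\}\subset\op{Fix}(\i)$ and $y\notin\op{Fix}(\i)$. Such loxodromics exist in $\SL_n(\mathbb R)$; the parity hypothesis on $n$ would enter here or in arranging Zariski density together with these eigenvalue constraints, and I would check which in the details.

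The remaining conditions are handled as follows.
\begin{enumerate}
\item Borel--Anosov: this follows from ping-pong on the full flag variety for $k$ large.
\item Zariski density: this is arranged by choosing $A,B$ generic, for example using Benoist's criterion that a Schottky group generated by sufficiently generic loxodromics is Zariski dense.
\item $\rho(F_2)/\rho(\mathcal N)\simeq\mathbb Z$: since $\rho$ is faithful, this is automatic.
\end{enumerate}
All three conditions, and the angular-margin condition on the Jordan projections, are open in $\rho$ because the estimate above holds with a uniform constant in a neighbourhood. This produces the open set $\Omega$.

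I expect the main obstacle to be the uniform additivity estimate, with $C$ independent of both $k$ and the word length and stable under perturbation. I would prove it by tracking the attracting flags through the product, using that transversality gives a bounded defect $\|\mu(gh)-\mu(g)-\mu(h)\|\le C$ whenever the repelling flag of $g$ and the attracting flag of $h$ are uniformly transverse. The ping-pong dynamics ensures this transversality at every step of a reduced word, so the defects accumulate only additively in $m$.
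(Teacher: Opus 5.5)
Your argument is correct, and it reaches the result by a different route from the paper's.

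\textbf{The paper's route.} The paper starts from an explicit reducible model $\rho_\kappa=\bigl(e^{-\kappa\chi/(2d)}\,\tau_{2d}\circ j\bigr)\oplus e^{\kappa\chi}$. In this model the middle Jordan coordinate satisfies $\lambda_{d+1}(\rho_\kappa(\gamma))=\kappa\chi(\gamma)$ exactly, so the Jordan projections of the kernel lie in the hyperplane $\{v_{d+1}=0\}$. The paper then proves a multiplicative perturbation estimate for the Jordan projections of all words (Propositions \ref{estimate} and \ref{cor-1}). It concludes with Benoist's theorem that the limit cone of the Zariski-dense group $\sigma(\mathcal N)$ is the closed cone spanned by its Jordan projections.

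\textbf{Your route.} You use a generic Schottky pair $A^k,B^k$ and per-letter coarse additivity of Cartan projections. In a word of $\mathcal N$ the quotient letters occur in inverse pairs. Each pair contributes $\mu(\rho(b))+\i\mu(\rho(b))\in\operatorname{Fix}(\i)$, and this holds exactly for every $\rho$, not just the model. The other generator contributes $kx+O(1)$ with $x$ fixed by $\i$. Hence all normalized Cartan projections of $\rho(\mathcal N)$ stay within $O(1/k)$ of $\operatorname{Fix}(\i)$, uniformly on a neighbourhood, while $\lambda(\rho(b))$ stays near $ky\notin\operatorname{Fix}(\i)$. To convert the per-letter error into an $O(1/k)$ angular error, you need $\|\sum_j\mu(\rho(s_j))\|\gtrsim km$. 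This holds because the summands lie in the pointed cone $\fa^+$; say so explicitly.

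\textbf{What each route buys.} Yours controls $\L_{\rho(\mathcal N)}$ directly through Cartan projections. You therefore need neither the reducible model (nor the Anosov criterion used to certify it) nor Benoist's Jordan-projection theorem. It also makes the role of the opposition involution transparent: the paper's hyperplane $\{v_{d+1}=0\}$ is just a hyperplane containing $\operatorname{Fix}(\i)$, as in Lemma \ref{lem:symmetric-directions-normality}. The paper's route, in exchange, gives an explicit model with an exact identity, and the same type of model drives the pressure-gap argument in Section 6.

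\textbf{Parity.} You need not hunt for where parity enters: your argument never uses it. It works for every $n\ge3$, and for any $G$ with non-trivial opposition involution. Oddness in the paper comes only from needing a middle coordinate in $\tau_{2d}\oplus(\text{character})$.

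\textbf{Minor points.} For Zariski density, do not rely on an unspecified genericity criterion. Instead intersect your neighbourhood with the open dense set of Zariski-dense representations (A'Campo--Burger), as the paper does. The inclusion you state for $\L_{\rho(F_2)}$ is not needed.
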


For $G=\SL_n(\mathbb R)$, a finitely generated subgroup $\Gamma<G$ is
Borel-Anosov if there is $C>1$ such that, for every $\gamma\in\Gamma$ and
$1\le i\le n-1$,
\begin{equation}\label{borel}
    \alpha_i(\mu(\gamma))\ge C^{-1}|\gamma|-C,
\end{equation}
where $\alpha_i(v)=v_i-v_{i+1}$ and $|\gamma|$ is word length. In
particular, such a subgroup is discrete and its orbit maps are
quasi-isometric embeddings; see \cite{GuichardWienhard12,KLP17}.

Theorem \ref{m} shows that the expected directional rigidity already fails
for the simplest infinite amenable quotient, $\mathbb Z$, and that the
failure  is also stable under perturbation. Since the support of $\psi_\Gamma$ is
$\L_\Gamma$, it also gives
\[
    \psi_{\rho(\mathcal N)}\ne\psi_{\rho(F_2)}
    \qquad(\rho\in\Omega).
\]

\subsection*{Different growth with the same limit cone}
The preceding examples separate the growth indicators by separating their
supports. The next result shows that this is not the whole phenomenon: even
when the two groups have exactly the same asymptotic directions, their
rates of growth in those directions can differ.

\begin{restatable}{theorem}{GrowthIndicatorGap}\label{thm:growth-indicator-gap}
There exist a Zariski-dense Borel-Anosov subgroup
$\Gamma<\SL_3(\mathbb R)$ and a cocyclic Zariski-dense normal subgroup
$N\lhd\Gamma$ such that
\[
    \L_N=\L_\Gamma
\]
and
\[
    \psi_N(v)\ne\psi_\Gamma(v)
    \qquad\text{for some }v\in\inte\L_N.
\]
\end{restatable}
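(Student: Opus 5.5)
The plan is to build $\Gamma$ as a Schottky group from a cocyclic splitting in which the normal subgroup sees every asymptotic direction of $\Gamma$, yet sees the directions in one region of $\fa^+$ with strictly fewer elements. Let $F_2=\langle a,b\rangle$ and $\mathcal N$ the normal closure of $a$. Then $\mathcal N$ is free on the conjugates $b^kab^{-k}$, $k\in\Z$, and the quotient is $\Z$, recorded by the $b$-exponent sum. We choose $\rho(a)=A$ and $\rho(b)=B$ loxodromic in $\SL_3(\br)$ in general position, in ping-pong configuration, and Zariski dense. Zariski density of $N=\rho(\mathcal N)$ then follows because $N$ is normal in the simple group's Zariski-dense subgroup $\Gamma$ and is infinite, so its Zariski closure is a normal subgroup of positive dimension.

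To make the limit cones equal, we need directions dominated by $B$ to show up inside $N$. The idea is to use commutator-type words $b^k a b^{-k}$ and, more generally, products $w\, b^{m}\, a\, b^{-m'}\cdots$ whose total $b$-exponent is zero. In a Schottky/Borel–Anosov group, Cartan projections are additive up to bounded error along reduced words, so $\mu(b^k a^{\epsilon} b^{-k})\approx k\mu(B)+k\mu(B^{-1})=k(\lambda(B)+\iota\lambda(B))$. Here $\iota$ is the opposition involution. Hence $N$ obtains the direction $\lambda(B)+\iota\lambda(B)$ but never $\lambda(B)$ by itself. This is exactly the mechanism behind Theorem \ref{m}.

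To force $\L_N=\L_\Gamma$, I would choose $B$ with $\iota\lambda(B)=\lambda(B)$, i.e. with Jordan projection on the $\iota$-fixed line (eigenvalues $e^{t},1,e^{-t}$). Then $\lambda(B)$ and $\lambda(B)+\iota\lambda(B)$ are parallel. Every reduced word $\gamma$ has $\mu(\gamma)\approx\sum$ of the Jordan projections of its syllables. Reordering so that the $b$-exponents cancel changes this sum only by the direction of $\lambda(B)$, which is already realized in $N$. Using the convexity of limit cones for Zariski-dense groups \cite{Benoist97} together with semigroup approximations, this gives $\L_N\supseteq\L_\Gamma$.

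The growth gap comes from counting, carried out in a cone $\mathcal C$ around an interior direction $v$ close to $\lambda(B)$. By the Schottky additivity, elements of $\Gamma$ with $\mu\in\mathcal C$ and norm less than $T$ correspond to reduced words with prescribed syllable statistics. The exponential count is a large-deviation entropy of the free-group word combinatorics; this is a variational formula in the proportions of $a$- and $b$-syllables. For $N$ we add the constraint that the $b$-exponent sum is zero, which forbids long runs of $b$ of a single sign. Concretely, near $\lambda(B)$ most of $\Gamma$'s mass comes from words like $b^{k}$ with sparse $a$-insertions, and this entropy is strictly larger than the entropy under the constraint that the $b$-exponents alternate in sign. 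Since the map $b\mapsto$ sign contributes a factor in the counting, I would exhibit explicit lower bounds for $\Gamma$ and matching upper bounds for $N$. Separating these two bounds strictly, via the two variational problems, is the main obstacle.

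For a first attempt I would take $A$ and $B$ with very large and very different eigenvalue gaps, making the additivity errors negligible, and compute the entropies in the limit where the translation lengths tend to infinity. There the counting reduces to weighted free-group combinatorics with explicit formulas. Once the gap is strict there, it persists for large parameters by continuity.
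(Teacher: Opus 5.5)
There is a genuine gap, at exactly the step you flag as the main obstacle: in your construction the proposed mechanism for a strict inequality does not exist.

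Imposing $\chi(\gamma)=0$ for the quotient character $\chi$ is a single linear constraint on a Birkhoff sum. It costs only a polynomial factor in directional counting, unless the entropy-maximizing measure in direction $v$ has nonzero $\chi$-drift. In the symbolic coding, for $v\in\inte\L_\Gamma$ the value $\psi_\Gamma(v)$ is realized by a unique equilibrium state $m_v$, namely that of $-\varphi_v\circ\mu$ for a tangent form $\varphi_v$. If $m_v$ gives zero mean to the locally constant function coding $\chi$, then $\psi_N(v)=\psi_\Gamma(v)$.

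Your choice $\i\lambda(B)=\lambda(B)$, with $\chi$ the $b$-exponent sum, removes any such drift at leading order. For $\gamma=a^{p_1}b^{q_1}\cdots a^{p_k}b^{q_k}$ you have $\mu(\gamma)\approx\sum_i\mu(A^{p_i})+\sum_i|q_i|\lambda(B)$. This is invariant under the automorphism $a\mapsto a$, $b\mapsto b^{-1}$ of $F_2$, while that automorphism negates $\chi$. So in the limiting regime where you propose to compute (additivity errors negligible), uniqueness of $m_v$ forces zero drift, and $\psi_N=\psi_\Gamma$ in \emph{every} direction. There is no strict gap there to propagate by continuity. Any gap at finite parameters would have to come from the junction corrections you discard, and nothing in the proposal controls them.

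Your supporting heuristic is also incorrect. The constraint $\sum_i q_i=0$ does not forbid long runs of one sign, since $b^kab^{-k}\in N$. And on the $\i$-fixed ray through $\lambda(B)$ itself, Theorem \ref{fd} already gives $\psi_N=\psi_\Gamma$.

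The missing idea is a first-order coupling between the quotient character and the non-$\i$-symmetric part of $\mu$, combined with a strict convexity statement. The paper builds this into the model $\rho_\kappa=e^{-\kappa\chi/2}j\oplus e^{\kappa\chi}$:
\begin{itemize}
\item One has exactly $\varphi(\mu(\rho_\kappa(\gamma)))=(s_1+s_2)\bigl(\ell(\gamma)+t_\varphi\chi(\gamma)\bigr)$, with $t_\varphi\neq0$ precisely when $\varphi$ is nonsymmetric.
\item Roblin's rank-one theorem for $\ker\chi$, together with positivity of the asymptotic variance of $\chi$ (Lemma \ref{lem:pressure-gap}), gives $\delta_{N,\varphi}<\delta_{\Gamma,\varphi}$.
\item A large Schottky generator $\mathsf c\in N$ with $\lambda(\mathsf c)\ne\i\lambda(\mathsf c)$ is adjoined. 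This places the maximizing direction $v_\varphi$ of $\delta_{\Gamma,\varphi}$ in $\inte\L_N$ while the exponent gap persists.
\item Lemma \ref{KMO_tent} converts the exponent gap into $\psi_N(v_\varphi)<\psi_\Gamma(v_\varphi)$, since equality would give $\delta_{N,\varphi}\ge\psi_N(v_\varphi)=\psi_\Gamma(v_\varphi)=\delta_{\Gamma,\varphi}$.
\end{itemize}

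The equality $\L_N=\L_\Gamma$ also cannot come from your reordering heuristic. The $O(1)$ junction errors per syllable add up to $O(k)$ over $k$ syllables and can move asymptotic directions. The paper instead uses the finite-index ping-pong Lemma \ref{lem:same-cone-finite-index}, which makes $\L_\Gamma$ the cone spanned by $\lambda(\mathsf c)$ and $\lambda(\mathsf c^{-1})$, both realized in $N$. Applied in your setting with $A\in N$ in the role of $\mathsf c$ and $\Delta=\langle B\rangle$, that lemma would repair your limit-cone step. It is the growth gap that fails.
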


Thus the counterexamples are not merely caused by a loss of available
Cartan directions. They exhibit a genuine change in directional growth
inside a common limit cone.

\subsection*{The rigidity that survives}
We now describe the exact remnant of the rank-one theorem. Let
$\i:\fa^+\to\fa^+$ be the opposition involution, characterized by
\[
    \mu(g^{-1})=\i(\mu(g)).
\]
For a linear form $\varphi\in\fa^*$, define the associated directional
critical exponent by
\[
\delta_{\Gamma,\varphi}
:=
\limsup_{T\to\infty}
\frac1T
\log\#\{\gamma\in\Gamma:\varphi(\mu(\gamma))<T\},
\]
and set
\[
    \bar\varphi:=\frac12(\varphi+\varphi\circ\i).
\]
The following symmetrized inequality is the main coamenability theorem of
the paper.

\begin{restatable}{theorem}{SymmetrizedCriticalExponent}\label{coa}
Let $G$ be a connected semisimple real algebraic group, let
$\Gamma<G$ be a Zariski-dense Borel-Anosov subgroup, and let
$N\lhd\Gamma$ be coamenable. If $\varphi\in\fa^*$ is positive on
$\L_\Gamma-\{0\}$, then
\[
    \delta_{\Gamma,\bar\varphi}
    \le \delta_{N,\varphi}
    \le \delta_{\Gamma,\varphi}.
\]
In particular, if $\varphi=\varphi\circ\i$, then
$\delta_{N,\varphi}=\delta_{\Gamma,\varphi}$.
\end{restatable}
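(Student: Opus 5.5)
The upper bound $\delta_{N,\varphi}\le\delta_{\Gamma,\varphi}$ is immediate from $N\subset\Gamma$, so all the work is in the lower bound $\delta_{\Gamma,\bar\varphi}\le\delta_{N,\varphi}$. The idea is to turn Roblin's amenability argument into a statement about a symmetric random walk, or equivalently a symmetric Markov operator, on $\Gamma/N$. Fix $s>\delta_{N,\varphi}$; we will show $s\ge\delta_{\Gamma,\bar\varphi}$. Consider the kernel
\[
P_s(xN,yN):=\sum_{n\in N} e^{-s\,\bar\varphi(\mu(x^{-1}yn))}.
\]
It is well defined up to bounded multiplicative error, because $\mu(x^{-1}yn)$ and $\mu(x'^{-1}y'n')$ differ by a bounded amount when $x'\in xN$ after reindexing. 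The point of using $\bar\varphi$ is that $\bar\varphi(\mu(g))=\bar\varphi(\mu(g^{-1}))$, so the kernel is symmetric.

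The first step is to show that each row sum $\sum_{n}e^{-s\bar\varphi(\mu(gn))}$ is finite. We want to compare it with $\sum_n e^{-s\varphi(\mu(gn))}$, which converges for $s>\delta_{N,\varphi}$ because translating by the fixed element $g$ changes $\mu$ by a bounded amount. Since $\bar\varphi\ne\varphi$ in general, we instead symmetrize at the level of the group: $\varphi(\mu(n))+\varphi(\mu(n^{-1}))=2\bar\varphi(\mu(n))$. Then Cauchy--Schwarz applied to $e^{-s\bar\varphi(\mu(n))}=e^{-\frac s2\varphi(\mu(n))}e^{-\frac s2\varphi(\mu(n^{-1}))}$ gives
\[
\sum_{n\in N}e^{-s\bar\varphi(\mu(n))}\le\Big(\sum_n e^{-s\varphi(\mu(n))}\Big)^{1/2}\Big(\sum_n e^{-s\varphi(\mu(n^{-1}))}\Big)^{1/2}.
\]
Both factors equal the same convergent sum, because $N$ is closed under inversion. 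So $\sum_{n\in N}e^{-s\bar\varphi(\mu(n))}<\infty$ for every $s>\delta_{N,\varphi}$, that is, $\delta_{N,\bar\varphi}\le\delta_{N,\varphi}$. This reduces the theorem to the symmetric case $\delta_{\Gamma,\bar\varphi}\le\delta_{N,\bar\varphi}$, where $\bar\varphi=\bar\varphi\circ\i$ is still positive on $\L_\Gamma-\{0\}$.

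For the symmetric case we follow Roblin's scheme, in the form used for growth of normal subgroups via amenability (Coulon--Dal'bo--Sambusetti, or the Kesten-type argument). Write $\psi=\bar\varphi$, so $\psi(\mu(g^{-1}))=\psi(\mu(g))$. Borel--Anosov gives the required coarse geometry: $\psi\circ\mu$ is a quasi-metric on $\Gamma$ coarsely satisfying the triangle inequality $\psi(\mu(gh))\le\psi(\mu(g))+\psi(\mu(h))+C$, and it is comparable to word length. Positivity on $\L_\Gamma$ and Anosov subadditivity up to constants give this, using the Benoist/Kassel--Guichard bounds $\|\mu(gh)-\mu(g)-\mu(h)\|$ controlled for $\psi$ along Anosov products; rather than exact subadditivity, we use $\mu(gh)\in\mu(g)+\mu(h)+$ bounded set when $g,h$ are ``transverse'' (see the obstacle below). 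Define the symmetric operator $A_s$ on $\ell^2(\Gamma/N)$ with kernel $P_s$. Coamenability means the trivial representation is weakly contained in $\ell^2(\Gamma/N)$, so the spectral radius of $A_s$ equals its $\ell^1$ row norm $\sum_{\gamma\in\Gamma}\ldots$ restricted suitably. Concretely, we take Følner sets $F\subset\Gamma/N$ and compute $\langle A_s^k\mathbbm 1_F,\mathbbm 1_F\rangle/|F|$. On one hand this is at most $\|A_s\|^k$, which is controlled by the $N$-Poincaré series (Schur test, using symmetry). On the other hand, by Følner it is approximately a $\Gamma$-Poincaré sum over $k$-fold products. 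Comparing exponential growth rates in $k$ forces $\Gamma$-series divergence at $s$ to contradict $N$-convergence. The result is that $\sum_\Gamma e^{-s'\psi(\mu(\gamma))}<\infty$ for $s'>s$, which gives $\delta_{\Gamma,\psi}\le\delta_{N,\psi}$.

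\textbf{Main obstacle.} The hardest step is the lower comparison of $k$-fold convolutions with the $\Gamma$-Poincaré series. This needs a reverse triangle inequality $\psi(\mu(\gamma_1\cdots\gamma_k))\ge\sum\psi(\mu(\gamma_i))-kC$ for many products, which is false in general in higher rank. The plan is to use Zariski density and the Anosov property to fix a finite set $S\subset\Gamma$ such that for any $g,h$ some $f\in S$ makes $g f h$ uniformly loxodromic-transverse, so that $\mu(gfh)=\mu(g)+\mu(h)+O(1)$ (the Abels--Margulis--Soifer trick). We then insert these connectors into the products, which costs only a bounded factor $|S|^k e^{O(k)}$ that is negligible after sending $s'\downarrow s$ via a small-ball argument. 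Alternatively, one could work with a Patterson--Sullivan conformal density for $\psi$ on $\Gamma$ and show that it restricts to an $N$-invariant, $(\Gamma,\psi)$-conformal density of dimension $\delta_{N,\psi}$ using the invariant mean. Then the shadow lemma together with the Anosov structure would give $\delta_{\Gamma,\psi}\le\delta_{N,\psi}$. I would try the Følner/convolution route first.
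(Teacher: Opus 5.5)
\textbf{There is a genuine gap in the symmetric case, which is the whole theorem.} Your Cauchy--Schwarz step is correct and worth keeping. Since $\bar\varphi(\mu(n))=\tfrac12\bigl(\varphi(\mu(n))+\varphi(\mu(n^{-1}))\bigr)$ and $N=N^{-1}$, you get $\delta_{N,\bar\varphi}\le\delta_{N,\varphi}$. So the theorem reduces to $\delta_{\Gamma,\psi}\le\delta_{N,\psi}$ for $\psi=\psi\circ\i$; the paper instead obtains $\bar\varphi$ by cancelling a character. But your argument for the symmetric case does not work.

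\textbf{Where it fails.} The sum you show is finite, $\sum_{n\in N}e^{-s\psi(\mu(gn))}$, is a single entry $P_s(xN,yN)$ of the kernel, not a row sum. The row sum is $\sum_{yN}P_s(xN,yN)=\sum_{\gamma\in\Gamma}e^{-s\psi(\mu(\gamma))}$. So the Schur test bounds $\|A_s\|$ by the $\Gamma$-series, not the $N$-series. Moreover, for amenable $\Gamma/N$, Kesten's theorem says $\|A_s\|$ equals that $\Gamma$-series, finite or not. Your F{\o}lner comparison therefore compares the $\Gamma$-series with itself and proves nothing.

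Relatedly, the ``main obstacle'' is misplaced. The F{\o}lner lower bound needs no reverse triangle inequality, since $\|f^{*k}\|_1=\|f\|_1^k$. If you instead bounded the return entries $A_s^k(xN,xN)$ by the $N$-series, you would have to control how many $k$-tuples have a given product in $N$. Abels--Margulis--Soifer connectors only control specially built products, not full convolution powers.

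\textbf{The missing ingredient.} You need a mechanism by which finiteness of the $N$-series controls the $\Gamma$-kernel. In the paper this is the function $U(x)=\sum_{n\in N}e^{-a\,\varphi(\mu(x^{-1}n))}$ with $a>\delta_{N,\varphi}$. It is combined with a shadow lemma for the coset measures, which uses coarse additivity of $\mu$ along word geodesics and non-elementarity of $N$. Together these give the convolution estimate
\[
\sum_{y\in\Gamma}U(y)\,e^{-s\,\varphi(\mu(x^{-1}y))}\le b_s\,U(x)\qquad(s>a).
\]
The invariant mean then averages $\log U(\gamma\gamma_0)-\log U(\gamma)$ into a character. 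Jensen gives a twisted $\Gamma$-series bound, and inversion plus AM--GM removes the twist.

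\textbf{How your route could be completed.} Since $U$ is right $N$-invariant, the displayed inequality, with $\psi$ in place of $\varphi$, says exactly $A_sU\le b_sU$ for your operator. Your kernel is symmetric and nonnegative, so the weighted Schur test with weight $U$ gives $\|A_s\|\le b_s$. Kesten's theorem, applied to symmetric finite truncations, then gives $\sum_{\gamma\in\Gamma}e^{-s\psi(\mu(\gamma))}\le b_s$, and no character is needed. But this requires precisely the shadow and convolution estimate, which your proposal does not contain.

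Your Patterson--Sullivan alternative has the same gap. It also omits the character twist that averaging by an invariant mean produces.
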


The appearance of $\bar\varphi$ is not an artifact of the proof. It
identifies the universal symmetry that remains under passage to a
coamenable normal subgroup. By convex duality, Theorem \ref{coa} implies
the following rigidity statement for growth indicators.

\begin{restatable}{theorem}{GrowthIndicatorRigidity}\label{thm:growth-indicator-symmetry}\label{fd}
Let $G$, $\Gamma$, and $N$ be as in Theorem \ref{coa}. Then
\[
    \psi_N(v)=\psi_\Gamma(v)
    \qquad\text{whenever }\i(v)=v.
\]
In particular, if the opposition involution is trivial, then
$\psi_N=\psi_\Gamma$ on $\fa^+$.
\end{restatable}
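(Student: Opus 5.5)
Theorem \ref{fd} will follow from Theorem \ref{coa} by convex duality, using the standard description of $\psi_\Gamma$ as a concave function. For a Zariski-dense Anosov subgroup, $\psi_\Gamma$ is concave, upper semicontinuous, homogeneous of degree one, and positive on $\inte\L_\Gamma$ (Quint). Moreover, for every $\varphi\in\fa^*$ positive on $\L_\Gamma-\{0\}$ one has the duality
\[
\delta_{\Gamma,\varphi}=\inf\{t>0:\ t\varphi\ge\psi_\Gamma \text{ on }\fa^+\}
=\max_{v\in\L_\Gamma,\ \varphi(v)=1}\psi_\Gamma(v).
\]
Equivalently, $\psi_\Gamma(v)=\inf\{\delta_{\Gamma,\varphi}\varphi(v)\}$ over such $\varphi$.

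The same identities hold for $N$, and here some care is needed. For a general discrete subgroup of $G$, Quint shows that $\psi_N$ is concave, upper semicontinuous, with $\psi_N\le\psi_\Gamma$, and that the first equality above holds. The only requirement is that the counting series is finite in the relevant sense, which is true since $N<\Gamma$ and $\varphi$ is positive on $\L_N\subset\L_\Gamma$. The representation
\[
\psi_N(v)=\inf_\varphi \delta_{N,\varphi}\varphi(v)
\]
then holds at least for $v\in\inte\L_\Gamma$ by Hahn--Banach applied to the closed concave function $\psi_N$ (with value $-\infty$ off $\L_N$). This is the step I expect to need the most checking.

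Now fix $v$ with $\i(v)=v$. The inequality $\psi_N\le\psi_\Gamma$ is trivial from $N\subset\Gamma$. For the reverse inequality, take any admissible $\varphi$. Since $\i$ preserves $\L_\Gamma$ (because $\mu(\gamma^{-1})=\i\mu(\gamma)$), the form $\bar\varphi$ is also positive on $\L_\Gamma-\{0\}$, and $\bar\varphi(v)=\varphi(v)$ because $\i v=v$. Theorem \ref{coa} gives $\delta_{N,\varphi}\ge\delta_{\Gamma,\bar\varphi}$, so
\[
\delta_{N,\varphi}\varphi(v)\ge\delta_{\Gamma,\bar\varphi}\bar\varphi(v)\ge\psi_\Gamma(v),
\]
where the last inequality is the duality for $\Gamma$. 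Taking the infimum over $\varphi$ yields $\psi_N(v)\ge\psi_\Gamma(v)$.

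It remains to handle the boundary cases. If $v\notin\L_\Gamma$, both sides vanish (or are $-\infty$, depending on convention). If $v\in\partial\L_\Gamma$ is $\i$-fixed, the duality formula still applies, because it is valid for every $v$ at which the concave function is finite, as an infimum of supporting linear forms. Alternatively, one can approximate $v$ by $\i$-fixed interior points $v_k\to v$, obtained by averaging with $\i$ a point of the interior. One then uses upper semicontinuity of $\psi_N$ together with concavity of $\psi_\Gamma$ along the segment to get $\psi_N(v)\ge\limsup\psi_N(v_k)\ge\dots$. Since only an upper semicontinuity inequality is available in this direction, I would prefer the direct infimum formula.

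The main obstacle is thus justifying the dual representation of $\psi_N$ for the possibly non-Anosov subgroup $N$, that is, showing $\psi_N(v)=\inf_\varphi\delta_{N,\varphi}\varphi(v)$ at $\i$-fixed $v$. I would derive it from Quint's general theorem that $\psi_N$ is the least concave upper semicontinuous function with $\delta_{N,\varphi}\varphi\ge\psi_N$, which holds for any discrete subgroup. The final statement, that $\psi_N=\psi_\Gamma$ on $\fa^+$ when $\i$ is trivial, is immediate.
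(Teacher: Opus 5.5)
\textbf{There is a genuine gap, exactly at the step you flagged.} Theorem \ref{coa} applies only to forms positive on $\L_\Gamma-\{0\}$. So what you need is $\psi_N(v)=\inf\{\delta_{N,\varphi}\varphi(v)\}$ with the infimum over that restricted class.

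Hahn--Banach, or Quint's duality for the closed concave function $\psi_N$, only produces linear majorants $\varphi\ge\psi_N$. These are nonnegative on $\L_N$ but may be negative on $\L_\Gamma-\L_N$. Since $\L_N$ can be strictly smaller than $\L_\Gamma$ (Theorem \ref{m}), nothing lets you replace them by admissible forms without increasing $\varphi(v)$.

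In fact your claim that the representation holds on $\inte\L_\Gamma$ is false. For $v\in\inte\L_\Gamma-\L_N$ (nonempty in the examples of Theorem \ref{m}) one has $\psi_N(v)=-\infty$, while $\delta_{N,\varphi}\varphi(v)\ge 0$ for every admissible $\varphi$. Relatedly, nothing in your argument shows that an $\i$-fixed $v\in\L_\Gamma$ lies in $\L_N$ at all, which is part of the conclusion. Your boundary discussion simply presupposes $\psi_N(v)>-\infty$.

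\textbf{The missing ingredient is the normality input, Lemma \ref{lem:symmetric-directions-normality}:} $\L_N^{\i}=\L_\Gamma^{\i}$. It comes from the commutators $[\gamma^m,h]\in N$, which satisfy $\mu([\gamma^m,h])=\mu(\gamma^m)+\mu(\gamma^{-m})+O(1)$. With it, your dual route can be completed at $\i$-fixed $v$:
\begin{enumerate}
\item Since $\psi_N\circ\i=\psi_N$, any linear majorant $\varphi$ of $\psi_N$ can be replaced by $\bar\varphi$ without changing $\varphi(v)$.
\item A symmetric majorant satisfies $\varphi(u)=\varphi(\tfrac{u+\i u}{2})\ge 0$ for every $u\in\L_\Gamma$, because $\tfrac{u+\i u}{2}\in\L_\Gamma^{\i}\subset\L_N$.
\item Take $\varphi_0$ symmetric and positive on $\L_\Gamma-\{0\}$. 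Then $\varphi+\eta\varphi_0$ is admissible and satisfies $\delta_{N,\varphi+\eta\varphi_0}\le 1$.
\item Theorem \ref{coa} then gives $\psi_\Gamma(v)\le\varphi(v)+\eta\varphi_0(v)$.
\item Fenchel--Moreau for $\psi_N$, valid at every point including boundary points, yields $\psi_\Gamma(v)\le\psi_N(v)$.
\end{enumerate}

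Repaired this way, your argument genuinely differs from the paper's. The paper takes a symmetric tangent form $\varphi$ to $\psi_\Gamma$ at an interior $v$ and uses Theorem \ref{coa} to get $\delta_{N,\varphi}=1$. It then picks a tangency point $u$ of $\varphi$ with $\psi_N$, and invokes strict concavity of $\psi_\Gamma$ (Theorem \ref{st}) to force $u\in\mathbb R_{>0}v$. Boundary $v$ is handled by continuity along a segment to a symmetric point of $\inte\L_N$. The dual route avoids strict concavity and treats boundary points directly, but neither route can dispense with the normality lemma.
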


The opposition involution is trivial precisely when $G$ has no simple
factor of type $A_n$ $(n\ge2)$, $D_{2n+1}$ $(n\ge2)$, or $E_6$; see
\cite[1.5.1]{Tits_classification}. Theorem
\ref{thm:growth-indicator-gap} shows that, when the involution is
nontrivial, the fixed-point condition in Theorem
\ref{thm:growth-indicator-symmetry} cannot in general be enlarged, even
under the additional assumption $\L_N=\L_\Gamma$.

The ordinary Riemannian critical exponent is governed by an
opposition-fixed maximizing direction. Consequently, the scalar part of
Roblin's theorem survives unchanged.

\begin{restatable}{theorem}{RiemannianCriticalExponentRigidity}\label{m3}
Let $G$ be a connected semisimple real algebraic group, let
$\Gamma<G$ be a Zariski-dense Borel-Anosov subgroup, and let
$N\lhd\Gamma$ be coamenable. Then
\[
    \delta_N=\delta_\Gamma.
\]
\end{restatable}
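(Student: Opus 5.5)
The plan is to derive Theorem \ref{m3} from Theorem \ref{thm:growth-indicator-symmetry}, together with the standard variational description of the Riemannian critical exponent in terms of the growth indicator. The inequality $\delta_N\le\delta_\Gamma$ is immediate since $N\subset\Gamma$, so only $\delta_\Gamma\le\delta_N$ needs work.

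First, I would recall Quint's formula: for a discrete subgroup $\Delta<G$ of finite critical exponent,
\[
\delta_\Delta=\sup_{v\in\fa^+,\ \norm{v}=1}\psi_\Delta(v).
\]
Here $\psi_\Delta$ is concave and upper semicontinuous on $\L_\Delta$, so the supremum is attained. The inequality $\ge$ follows directly from the definition \eqref{gr}. The inequality $\le$ follows by covering the unit sphere by finitely many small cones and using upper semicontinuity. Both $N$ and $\Gamma$ are discrete with $\delta<\infty$, since $\Gamma$ is Borel--Anosov, hence discrete with exponential word growth.

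The key step is to show that the maximum of $\psi_\Gamma$ on the unit sphere is attained at an $\i$-fixed direction. The opposition involution $\i$ is a linear isometry of $\fa$ preserving $\fa^+$ and the norm, because it is $-w_0$ for the longest Weyl element. Since $\mu(\gamma^{-1})=\i(\mu(\gamma))$ and $\gamma\mapsto\gamma^{-1}$ is a bijection of $\Gamma$, the definition gives $\psi_\Gamma\circ\i=\psi_\Gamma$. Now take a unit vector $u$ with $\psi_\Gamma(u)=\delta_\Gamma$, and set $w=\tfrac12(u+\i u)$. Then $w\neq0$, since both summands lie in the closed cone $\fa^+$, which contains no line. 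By concavity and 1-homogeneity,
\[
\psi_\Gamma(w)\ge\tfrac12\psi_\Gamma(u)+\tfrac12\psi_\Gamma(\i u)=\delta_\Gamma.
\]
Since $\norm{w}\le1$, the unit vector $v=w/\norm{w}$ satisfies $\psi_\Gamma(v)\ge\delta_\Gamma$, and hence $\psi_\Gamma(v)=\delta_\Gamma$. Moreover $\i(v)=v$.

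Finally, apply Theorem \ref{thm:growth-indicator-symmetry}: $\psi_N(v)=\psi_\Gamma(v)=\delta_\Gamma$. Quint's formula for $N$ then gives $\delta_N\ge\psi_N(v)=\delta_\Gamma$.

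The main point requiring care is the concavity of $\psi_\Gamma$ used in the averaging step. This is Quint's theorem for discrete subgroups; for Zariski-dense $\Gamma$ it holds on all of $\fa^+$, with the convention $\psi_\Gamma=-\infty$ outside $\L_\Gamma$. I would cite it rather than reprove it. Note that only the concavity of $\psi_\Gamma$ is needed, not that of $\psi_N$, and the formula for $\delta_N$ as a supremum holds for any discrete group.
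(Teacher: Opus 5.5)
Your argument is correct, but it reaches Theorem \ref{m3} through Theorem \ref{thm:growth-indicator-symmetry}, whereas the paper derives it directly from Theorem \ref{coa}.

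\textbf{Your route.} You average a maximizer $u$ with $\i u$. This uses concavity, homogeneity and $\i$-invariance of $\psi_\Gamma$, together with $\norm{\i u}=\norm{u}$. It produces an $\i$-fixed unit vector $v$ with $\psi_\Gamma(v)=\delta_\Gamma$, and you conclude $\delta_N\ge\psi_N(v)=\psi_\Gamma(v)=\delta_\Gamma$.

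\textbf{The paper's route.} The paper works instead with the maximal growth direction $v_N$ of $N$.
\begin{enumerate}
\item By Quint, the form $\varphi_N=\langle\cdot,v_N\rangle$ satisfies $\delta_{N,\varphi_N}=\delta_N$.
\item $\varphi_N$ is symmetric, by uniqueness of $v_N$.
\item $\varphi_N$ is positive on $\L_\Gamma-\{0\}$, since $\L_\Gamma-\{0\}\subset\inte\fa^+$ and $\fa^+$ is acute.
\item Theorem \ref{coa} then gives $\delta_{\Gamma,\varphi_N}=\delta_N$.
\item Cauchy--Schwarz gives $\delta_\Gamma\le\delta_{\Gamma,\varphi_N}$.
\end{enumerate}

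\textbf{Comparison.} The paper's route needs only the symmetrized inequality for a single linear form. It therefore bypasses the extra inputs behind Theorem \ref{thm:growth-indicator-symmetry}: strict concavity of $\psi_\Gamma$, the tangency result of \cite{KOW_indicators}, and the boundary continuity step. Your route, granted that theorem, is shorter. It needs no supporting linear form and no uniqueness of a maximizing direction. It also carries over directly to any $\i$-invariant norm, which is the generalization the paper remarks on after its proof.

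One minor caveat concerns your closing comment that concavity of $\psi_N$ is not needed. That is true only of your final step. The proof of Theorem \ref{thm:growth-indicator-symmetry} that you invoke does use the structure of $\psi_N$: the tangency result for $N$, and its concavity in the boundary case.
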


Together, these results give a sharp hierarchy: coamenability may fail to
preserve the limit cone, but it preserves the growth indicator in every
opposition-fixed direction and preserves the Riemannian critical exponent.

\subsection*{The role of the Borel-Anosov hypothesis}
The Borel-Anosov assumption has different roles in the negative and
positive results. In Theorems \ref{m} and
\ref{thm:growth-indicator-gap}, it strengthens the counterexamples: the
failure occurs for groups with uniform regularity, hyperbolic dynamics, and
well-controlled asymptotic geometry. In Theorem \ref{coa}, the assumption
provides the geometric structure used by the proof. For every
$\varphi>0$ on $\L_\Gamma-\{0\}$, the function
\[
    d_\varphi(\gamma_1,\gamma_2)
    =\varphi\bigl(\mu(\gamma_1^{-1}\gamma_2)\bigr)
\]
is proper and coarsely comparable to a word metric, while Cartan
projections are coarsely additive along word geodesics. These facts yield
the shadow and convolution estimates needed for amenable averaging. The
passage from directional critical exponents to growth indicators also uses
the strict concavity available for Borel-Anosov groups.

Thus the hypothesis is structural for the present proof rather than a
mere convenience. At the same time, Borel-Anosov is not the maximal scope
of the method. As explained in Remark \ref{general}, the argument extends
to settings in which the same coarse triangle and regularity estimates are
available, including relatively Morse subgroups and cusped Hitchin groups,
as well as the corresponding $\theta$-versions. Whether Theorem \ref{m3}
holds for arbitrary Zariski-dense discrete subgroups, without an
Anosov--Morse hypothesis, remains open.

In real rank one, Borel-Anosov subgroups are precisely convex cocompact
subgroups \cite{GuichardWienhard12}. Other examples include Hitchin
representations into split real semisimple groups
\cite{Labourie06,FockGoncharov11}, Schottky subgroups
\cite[Lemma 7.2]{ELO_anosov}, and self-joinings of convex cocompact
representations into products of rank-one groups.

\subsection*{Ideas of the proofs}
The counterexample in Theorem \ref{m} begins with a particularly simple
mechanism. Let $F_2=\langle a,b\rangle$, let
$\chi:F_2\to\mathbb Z$ satisfy $\chi(a)=1$ and $\chi(b)=0$, and let
$j:F_2\to\SL_2(\mathbb R)$ be Schottky. For $n=2d+1$, we combine the
irreducible $2d$-dimensional representation of $\SL_2$ with a
one-dimensional character twist:
\[
    \rho_\kappa(\gamma)
    =
    \begin{pmatrix}
    e^{-\kappa\chi(\gamma)/(2d)}\tau_{2d}(j(\gamma))&0\\
    0&e^{\kappa\chi(\gamma)}
    \end{pmatrix}.
\]
For sufficiently small $\kappa>0$, the middle Jordan coordinate satisfies
\[
    \lambda_{d+1}(\rho_\kappa(\gamma))=\kappa\chi(\gamma).
\]
It therefore vanishes on $\ker\chi$ but not on the ambient group. This
places the limit cone of the kernel in the hyperplane
$\{v_{d+1}=0\}$ while leaving an ambient direction outside that
hyperplane.

The model representation is reducible, so this observation alone does not
give the open Zariski-dense family in Theorem \ref{m}. The required
robustness is the first technical point of the paper. Since $\ker\chi$ has
infinite index and is not Anosov, the usual continuity of limit cones for
Anosov representations cannot be applied directly. We instead prove a
uniform perturbation estimate for the Jordan projections of every word in
the free group. It preserves the separation of normalized Jordan
directions under small perturbations and allows us to pass to an open set
of Zariski-dense Borel-Anosov representations.

For Theorem \ref{coa}, we work with weighted Poincar\'e series for the coarse
distance $d_\varphi$. Coarse additivity gives shadow estimates and a
weighted convolution inequality over cosets of $N$. An invariant mean on
$\Gamma/N$ averages the logarithmic distortion of the weights and produces
a character $\chi_\varphi:\Gamma\to\mathbb R$. Repeating the construction
for $\varphi\circ\i$ changes the sign of this character. Combining the two
estimates cancels it and leaves exactly the symmetrized form
$\bar\varphi$. Theorems \ref{thm:growth-indicator-symmetry} and \ref{m3}
then follow from convex duality and the geometry of the maximal growth
direction.

Finally, to prove Theorem \ref{thm:growth-indicator-gap}, we first show in
$\SL_3(\mathbb R)$ that every nonsymmetric positive linear form can detect
a strict gap between the dual critical exponents of a cocyclic subgroup and
its ambient group. In the model representation, this reduces to a
perturbation of the Schottky length by a character, and a pressure argument
shows that the critical exponent increases strictly away from the symmetric
form. We then adjoin a sufficiently large Schottky generator so that the
normal subgroup acquires nonsymmetric interior directions while the
critical-exponent gap persists. A finite-index refinement arranges
$\L_N=\L_\Gamma$.

\subsection*{Organization}
Section 2 proves the uniform perturbation estimate, and Section 3 applies
it to Theorem \ref{m}. Section 4 proves Theorem \ref{coa}; Section 5 derives
its two rigidity consequences. Section 6 establishes sharpness and proves
Theorem \ref{thm:growth-indicator-gap}. The appendix, by the third named
author, proves $\delta_N\ge\frac12\delta_\Gamma$ for every infinite normal
subgroup; its cone-localized form also gives \eqref{normalg}.

\subsection*{Acknowledgements}
This material is based upon work supported by the National Science
Foundation under Grant No. DMS-2424139 while the authors were in residence
at the Simons Laufer Mathematical Sciences Institute in Berkeley,
California, during the Spring 2026 semester. We thank Fanny Kassel for
asking whether the examples in Theorem \ref{thm:growth-indicator-gap} can
be arranged to satisfy $\L_N=\L_\Gamma$.

\section{Uniform perturbative control of Jordan projections}
The reducible model used in the proof of Theorem \ref{m} already
separates an ambient Jordan direction from all directions arising in the
cocyclic kernel. To turn that model into a nonempty open family of
Zariski-dense representations, however, the separation must persist
uniformly over infinitely many elements of the kernel. Ordinary continuity
of limit cones for Anosov representations does not provide this: the kernel
has infinite index and is not itself Anosov. The purpose of this section is
to establish the uniform perturbation estimate that replaces that
continuity argument.

Fix $n\ge 2$. For a matrix $g\in \op{SL}_n(\br)$, we denote by $\lambda_1(g)\geq \cdots \geq \lambda_n(g)$
the logarithms of the moduli of the eigenvalues of $g$, arranged in
non-increasing order. The Jordan projection $\la(g)$ is defined as 
$$
    \lambda(g)=\diag(\lambda_1(g),\ldots,\lambda_n(g)).
$$ 

 We denote by $(e_1,\ldots,e_n)$ the canonical basis of $\br^n$. We equip $\br^n$ with the
standard Euclidean norm $\|\cdot\|$; the same notation will also be used for the induced
operator norm on $\op{SL}_n(\br)$. We shall equip the projective space $\mathbb{P}(\mathbb{R}^n)$ with the angle metric $d_{\mathbb{P}}$ defined as follows $$d_{\mathbb{P}}([u],[v])=\sqrt[]{1-\langle u,v\rangle^2}, \qquad  ||u||=||v||=1$$ where $\langle \cdot, \cdot\rangle$ is the standard Euclidean inner product on $\mathbb{R}^n$. Given two compact subsets $S_1,S_2\subset \mathbb{P}(\mathbb{R}^n)$, $\textup{dist}(S_1,S_2)$ denotes the Hausdorff distance between $S_1$ and $S_2$.

\subsection*{Proximal elements} A matrix $g\in \op{SL}_n(\br)$ is called \emph{proximal} if  $  \lambda_1(g)>\lambda_2(g)$. 
In this case, $g$ has a unique attracting point
$x_g^+\in \mathbb P(\br^n)$ and a repelling hyperplane $V_g^-$ such that, for any
$x\in \mathbb P(\br^n)- \mathbb P(V_g^-)$,
$$
    \lim_{k\to\infty} g^k x=x_g^+.
$$
A matrix $g$ is called \emph{biproximal} if both $g$ and $g^{-1}$ are proximal. In this
case we use the notation
$$
    x_g^-\coloneqq x_{g^{-1}}^+,
    \qquad
    V_g^+\coloneqq V_{g^{-1}}^-.
$$

Denote by $\mathcal{F}_{1,n-1}(\mathbb{R})$ the space of $(1,n-1)$-flags in $\mathbb{R}^n$. Two such flags $(x_1,V_1)$ and $(x_2,V_2)$ are {\em antipodal} if $x_1\notin V_2$ and $x_2\notin V_1$. 
For a biproximal element $g\in \SL_n(\mathbb R)$, the pair
$ (x_g^+,V_g^+) \in \mathcal{F}_{1,n-1}(\mathbb{R})$
is the attracting fixed flag of $g$, where $x_g^+$ is the attracting
fixed point of $g$ on $\mathbb P(\mathbb R^n)$, and $V_g^+$ is the
attracting fixed hyperplane of $g$, equivalently the attracting fixed point
of $\wedge^{n-1}g$ under the Pl\"ucker embedding. Similarly,
$(x_g^-,V_g^-)\in \mathcal{F}_{1,n-1}(\mathbb{R})$ is the repelling fixed flag.

The following elementary fact records the stability of attracting and repelling data under
small perturbations. 

\begin{lemma}\label{fact}
Let $h_0\in \op{SL}_n(\br)$, $n\geq 2$, be a biproximal matrix of the form
$$
    h_0=
    g
    \begin{pmatrix}
        \mu_1 & &\\
        & A &\\
        & & \mu_n
    \end{pmatrix}
    g^{-1},
    \qquad
    g\in \op{SL}_n(\br),\quad A\in \op{GL}_{n-2}(\br).
$$
with attracting and repelling fixed flags $$  \big([ge_1],\langle ge_1,\ldots,ge_{n-1}\rangle\big)   \quad\text{and}\quad    \big([ge_n],\langle ge_2,\ldots,ge_n\rangle\big),
$$
respectively. Given any $\epsilon>0$, there exists a neighborhood of $h_0$ in
$\op{SL}_n(\br)$ contained in the set
\begin{multline*}
\Omega(h_0,\epsilon)\\
\coloneqq
\left\{
gh
{\setlength{\arraycolsep}{2pt}
\begin{pmatrix}
\mu_1' & &\\
& A' &\\
& & \mu_n'
\end{pmatrix}
(gh)^{-1}:
\begin{array}{ll}
|\mu_1-\mu_1'|<\epsilon,&
\|A^{\pm1}-(A')^{\pm1}\|<\epsilon,\\
\|h-I_n\|<\epsilon,&
|\mu_n^{-1}-(\mu_n')^{-1}|<\epsilon
\end{array}}
\right\}.
\end{multline*}
\end{lemma}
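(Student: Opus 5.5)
This is a perturbation statement, and I would prove it with the implicit function theorem for a block-diagonalization map. First, since $\mu_1,\mu_n$ are real with $|\mu_1|>$ every eigenvalue modulus of $A$ $>|\mu_n|$ (biproximality), reduce to $g=I_n$. Conjugation by $g$ is a homeomorphism of $\op{SL}_n(\br)$. The set $\Omega(h_0,\epsilon)$ is obtained from $\Omega(g^{-1}h_0g,\epsilon')$ by conjugating with $g$: one writes $gh=(ghg^{-1})g$ and notes that $\|ghg^{-1}-I\|<\epsilon$ follows from $\|h-I\|<\epsilon'$ for $\epsilon'$ small depending on $\|g\|,\|g^{-1}\|$. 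So we may assume $h_0=\diag(\mu_1,A,\mu_n)$ with $h_0e_1=\mu_1e_1$ and $h_0e_n=\mu_ne_n$.

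Next consider the smooth map
\[
\Phi:\ U\times \br^{\times}\times \op{GL}_{n-2}(\br)\times\br^{\times}\to \op{GL}_n(\br),\qquad (h,\mu_1',A',\mu_n')\mapsto h\,\diag(\mu_1',A',\mu_n')\,h^{-1},
\]
where $h$ ranges over a transversal slice to the centralizer of the block structure. A convenient choice is
\[
h=\begin{pmatrix}1&x^T&s\\ y&I_{n-2}&z\\ t&w^T&1\end{pmatrix}
\]
with all off-block-diagonal entries free and the diagonal blocks fixed to the identity. The derivative of $\Phi$ at $(I,\mu_1,A,\mu_n)$ sends a variation $(\dot h,\dot D)$ to $[\dot h,h_0]+\dot D$. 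The map $\dot h\mapsto[\dot h,h_0]$ on the off-block-diagonal part is the Sylvester operator $X\mapsto XD_j-D_iX$ between distinct blocks $D_i,D_j\in\{\mu_1,A,\mu_n\}$. This operator is invertible precisely because the spectra of distinct blocks are disjoint, which is guaranteed by the strict modulus gaps coming from biproximality. The block-diagonal variations $\dot D$ fill the complementary subspace. Hence $d\Phi$ is an isomorphism onto $\mathfrak{gl}_n$, a dimension count confirms this, and by the inverse function theorem $\Phi$ is a local diffeomorphism near $(I,\mu_1,A,\mu_n)$. Determinant-one targets correspond to $\mu_1'\det A'\mu_n'=1$, which is a submanifold, so the argument restricts to $\op{SL}_n$. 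Alternatively, keep the $\op{GL}_n$ statement and intersect.

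Consequently every $h'$ near $h_0$ equals $h\,\diag(\mu_1',A',\mu_n')\,h^{-1}$ with $(h,\mu_1',A',\mu_n')$ depending continuously on $h'$ and converging to $(I,\mu_1,A,\mu_n)$. Continuity of inversion on $\op{GL}_{n-2}$ and of $\mu\mapsto\mu^{-1}$ at $\mu_n\neq0$ then gives all four inequalities once the neighborhood is small. If $\det h\neq1$, rescale $h$ by a scalar; this leaves the conjugate unchanged and only perturbs $h$ slightly. The fact that $h$ is merely near $I$ and not exactly identity-on-blocks is harmless.

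The main obstacle is the invertibility of the linearization, i.e.\ the disjointness of spectra of the blocks $\mu_1$, $A$, $\mu_n$. This is exactly what biproximality supplies, via $|\mu_1|>\rho(A)$ and $|\mu_n|<\rho(A^{-1})^{-1}$. The rest is bookkeeping. I would also remark that the new flags $([ghe_1],\langle ghe_1,\dots,ghe_{n-1}\rangle)$ are then the attracting and repelling data, though the lemma does not require this.
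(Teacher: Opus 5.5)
Your proof is correct, and it reaches the lemma by a different route from the paper. The paper argues dynamically. For $h'\to h_0$, the element $h'$ stays biproximal, and its attracting and repelling points $x^\pm_{h'}$, hyperplanes $V^\pm_{h'}$, and middle piece $V^+_{h'}\cap V^-_{h'}$ converge to those of $h_0$. One then implicitly takes $h$ to be a change of basis adapted to this converging splitting, which is close to $I_n$, and reads off diagonal blocks close to $\mu_1,A,\mu_n$.

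You instead prove persistence of the splitting directly. You apply the inverse function theorem to $(h,D')\mapsto hD'h^{-1}$ on a slice transverse to the block-diagonal centralizer. The key input is invertibility of the Sylvester operators $X\mapsto XD_j-D_iX$, which is exactly the pairwise disjointness of the spectra of $\mu_1$, $A$, $\mu_n$. Your version is self-contained and makes the construction of $h$ explicit. It also gives smooth dependence of $(h,\mu_1',A',\mu_n')$ on $h'$, and shows that only spectral disjointness of the blocks is needed. The paper's version is shorter. It is phrased in terms of the attracting and repelling data that the subsequent ping-pong argument uses anyway, but it leaves the passage from converging subspaces to a conjugating $h$ near $I_n$ unstated.

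One small remark: the $\epsilon'$ in your reduction step is unnecessary. Since $h$ sits to the right of $g$ in the definition, $\Omega(h_0,\epsilon)=g\,\Omega(g^{-1}h_0g,\epsilon)\,g^{-1}$ holds exactly with the same $\epsilon$, where the inner set is formed with $g=I_n$.
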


\begin{proof} This follows from the fact that, for any sequence $(h_n)_{n\in \mathbb{N}}$ in $\mathrm{GL}_n(\mathbb{R})$ with $\lim_n h_n=h_0$, then $h_n$ is also biproximal for large $n$, $\lim_n x_{h_n}^{\pm}=x_{h_0}^{\pm}$, $\lim_n V_{h_n}^{\pm}=V_{h_0}^{\pm}$ and $\lim_n V_{h_n}^{+}\cap V_{h_n}^{-}=V_{h_0}^{+}\cap V_{h_0}^{-}$.\end{proof}

\subsection*{Projective Anosov condition} A word-hyperbolic group $\Gamma<\textup{SL}_n(\mathbb{R})$ is called {\em projective Anosov} if there exist $c_0, c_1>0$ such that for all $\gamma\in \Gamma$, the Cartan projection $\mu(\ga)$ satisfies
$$\alpha_1(\mu(\gamma)) \geq c_0|\gamma|-c_1$$
where $\alpha_1(\op{diag}(v_1, \cdots, v_n))=v_1-v_2$
and $|\cdot |$ is the word length in $\Ga$ \cite{Labourie06,GuichardWienhard12,KLP17}.
By \cite{kassel-potrie}, this is equivalent to the existence of a constant $c>0$ such that, for all $\gamma\in \Gamma$, $$\lambda_1(\gamma)-\lambda_{2}(\gamma)\geq c|\gamma|_{\infty},$$ where $|\gamma|_{\infty}=\lim_{n\to \infty} \frac{|\gamma^n|}{n}$ is the stable translation length of $\gamma\in \Gamma$. 

\subsection*{Perturbative control of Jordan projections}
The next proposition provides the perturbative estimate needed to construct
the open family in Theorem \ref{m}. It says that, after replacing two
biproximal generators by sufficiently large powers, the first Jordan
coordinate of every word changes only by a prescribed multiplicative error
under small perturbations of the generators.
Although limit cones of Borel-Anosov subgroups are known to vary
continuously \cite{Sambarino14,kassel,DeyOh25}, this continuity cannot be
applied directly to the infinite-index normal subgroup, which is not Anosov.
The proposition below gives the required substitute: it controls the relevant
Jordan directions for all words in the normal subgroup under small
perturbations.

The proof is a uniform Schottky ping-pong estimate. It combines
control of attracting and repelling data with multiplicative spectral
bounds, following ideas of Abels--Margulis--Soifer
\cite{AbelsMargulisSoifer} and Benoist \cite{Benoist97,Benoist96}.

\begin{proposition}[Perturbative control of the top Jordan coordinate] \label{estimate}
Let $F_2$ be a free group generated by $a$ and $b$. Let ${\mathsf a},{\mathsf b}\in \op{SL}_n(\br)$, $n\geq 2$, be two biproximal matrices such that the four flags $(x_{\mathsf a}^\pm,V_{\mathsf a}^\pm)$, $(x_{\mathsf b}^\pm,V_{\mathsf b}^\pm)$ in $\mathcal F_{1,n-1}(\br)$ are pairwise antipodal. Given $\epsilon>0$, there exists
$m_0>0$ with the following property: for any $m\geq m_0$, there exists an open
neighborhood $\Omega_m$ of the representation
$$
    F_2\to \op{SL}_n(\br),
    \qquad
    a\mapsto {\mathsf a}^m,\quad b\mapsto {\mathsf b}^m,
$$
such that:
\begin{enumerate}
    \item\label{prop-item1} every representation in $\Omega_m$ is projective Anosov; 
    \item\label{prop-item2} for any 
    $\rho_1,\rho_2$ in $\Omega_m$ and any $h\in F_2$,
    $$
        (1-\epsilon)\lambda_1(\rho_1(h))
        \leq
        \lambda_1(\rho_2(h))
        \leq
        (1+\epsilon)\lambda_1(\rho_1(h)).
    $$
\end{enumerate}
\end{proposition}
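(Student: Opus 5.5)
We argue by a uniform ping-pong in which every cyclically reduced word is a product of perturbed powers of generators, each contributing a top eigenvalue up to a bounded multiplicative error. Since $\lambda_1$ is a conjugacy invariant, we may reduce to cyclically reduced words $h=s_1^{k_1}\cdots s_r^{k_r}$ with $s_i\in\{a^{\pm1},b^{\pm1}\}$, $s_{i+1}\neq s_i^{\pm1}$, $k_i\ge1$. First, using the antipodality of the four flags, we fix $\eta>0$ and disjoint compact neighborhoods: $U_s^+$ of $x_s^+$ in $\mathbb P(\mathbb R^n)$ and $W_s^-$ of $V_s^-$, with $U_t^+$ at distance $\ge\eta$ from $\mathbb P(W^-_s)$ for all $t\ne s^{-1}$. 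By Lemma \ref{fact}, applied to $\mathsf a^{\pm1},\mathsf b^{\pm1}$, there is a neighborhood in which each perturbed generator $\mathsf s'$ is conjugate by $gh$, with $h$ close to $I_n$, to a block matrix with nearby data. Its attracting and repelling flags therefore stay inside these neighborhoods. We then choose $m_0$ so large that $(\mathsf s')^m$ maps the complement of the $\eta$-neighborhood of $\mathbb P(V^-_{s})$ into $U_s^+$, contracting there by a factor $\le 1/2$. This holds uniformly for $\mathsf s'$ near $\mathsf s$, since $|\mu_1'|$ dominates $\|A'\|$ and $|\mu_n'|$ by a uniform gap, raised to the power $m$. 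Finally, $\Omega_m$ consists of the representations with $\rho(a)\in\Omega(\mathsf a^m,\epsilon')$ and $\rho(b)\in\Omega(\mathsf b^m,\epsilon')$. Here the essential point is that the block form is used \emph{after} taking the $m$-th power, so $\epsilon'$ depends on $m$.

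Next comes the multiplicative spectral estimate, in the spirit of Abels--Margulis--Soifer and Benoist. For a ping-pong word $w=\rho(h)$, the image $\rho(h)$ maps the cone $\bigcup U^+_{s_1}$ region into itself contractingly. Hence $\rho(h)$ is proximal, with attracting point in $U_{s_1}^+$ and repelling hyperplane in $W^-_{s_r}$, and
\[
\lambda_1(\rho(h))=\sum_{i}\log\frac{\|\rho(s_i)^{k_i} v_i\|}{\|v_i\|}+O(r),
\]
where $v_i$ is the image of the attracting vector under the tail of the word. The $O(r)$ term has constants depending only on $\eta$; it comes from the standard estimate that, for $v$ at angle $\ge\eta$ from the repelling hyperplane, $\log\|g v\|/\|v\|=\log\|g\|+O_\eta(1)$ for sufficiently proximal $g$. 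Applying this estimate twice, once to each of $\rho_1,\rho_2$, gives
\[
\lambda_1(\rho_j(h))=\sum_i k_i\,\ell_j(s_i)+O_\eta(r),
\]
where $\ell_j(s)=\lambda_1(\rho_j(s))$. Since the $\ell_j(s)$ lie within a factor $(1\pm\epsilon/3)$ of $m\lambda_1(\mathsf s)$, and $m\lambda_1(\mathsf s)\gg O_\eta(1)$ once $m\ge m_0$, we have $O_\eta(r)\le(\epsilon/3)\sum_i k_i\,m\min_s\lambda_1(\mathsf s)$. This gives item (2) after adjusting the constants.

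Item (1) follows from the same estimate. Applied to the second exterior power, or via the gap $\lambda_1-\lambda_2\ge c\,m\sum k_i$ that ping-pong gives on $\wedge^2$, the estimate yields $\lambda_1-\lambda_2\ge c|h|_\infty$, since the stable length of $h$ is $\sum k_i$. The Kassel--Potrie criterion then gives projective Anosov. Alternatively, a direct Cartan ping-pong bound on $\alpha_1(\mu)$ works equally well. The main obstacle I expect is making the $O_\eta(1)$ per-letter error truly uniform over the perturbation neighborhood while $m$ varies. In particular, $\Omega_m$ must be chosen after $m$ so that the $m$-th powers keep their contraction, yet the additive error constant must not depend on $m$. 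I would handle this by stating the angle estimate for all $g$ whose attracting and repelling data lie in the fixed neighborhoods and whose gap $\lambda_1-\lambda_2$ exceeds a threshold depending only on $\eta$.
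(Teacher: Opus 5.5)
Your proposal is correct and is essentially the paper's argument. Both use block-form neighborhoods of $\mathsf a^m,\mathsf b^m$ whose size $\epsilon'$ depends on $m$ (you rightly keep $\epsilon'$ separate from the target $\epsilon$; the paper uses one symbol for both), and a uniform ping-pong in which each block $\rho(s)^k$ contributes $k\lambda_1(\rho(s))$ up to an additive constant independent of $m$. Both then absorb these constants into block lengths of order $m$. The only difference is cosmetic: you telescope along the attracting eigenvector of $\rho(h)$ and compare each $\rho_j$ with $\sum_i k_i m\lambda_1(\mathsf s_i)$, whereas the paper compares $\|\rho_1(h)\|$ with $\|\rho_2(h)\|^{1-\epsilon}$ and passes to spectral radii through $h^r$.

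Two corrections are needed:
\begin{enumerate}
\item The $m$-independent per-block constant comes from norm domination in the block form, namely $\|\rho(s)^k\|\le K e^{k\lambda_1(\rho(s))}$. Here $K$ depends only on the fixed conjugating matrix, because the remaining diagonal blocks have norm below the top eigenvalue; this is the paper's estimate $\tfrac14\ell_1(C^p)\le\|C^p\|\le 4\ell_1(C^p)$. It does not come from a threshold on $\lambda_1-\lambda_2$. An eigenvalue gap cannot control $\|g\|/e^{\lambda_1(g)}$, as a large unipotent part inside the repelling hyperplane shows.
\item The $\wedge^2$ route to item (1) is unavailable, since $\mathsf a,\mathsf b$ need not be biproximal on $\wedge^2\mathbb R^n$. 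Your factor-$\tfrac12$ contraction of each generator on the ping-pong regions already suffices instead. It makes $\rho(h)$ $2^{-\sum k_i}$-Lipschitz near its attracting point, hence $\lambda_1-\lambda_2\ge(\log 2)|h|_\infty$, and Kassel--Potrie applies.
\end{enumerate}
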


\begin{proof}
The existence of $m_0>1$ such that, for all $m\geq m_0$, the subgroup
$\langle {\mathsf a}^m,{\mathsf b}^m\rangle$ is free and projective Anosov follows from \cite[Theorem 4.13]{KLP25}.
We now check that, after increasing $m_0$ and shrinking the neighborhood $\Omega_m$, if necessary, the estimate in (\ref{prop-item2}) also holds. 

 By the hypotheses on ${\mathsf a}$ and ${\mathsf b}$, after conjugating in
$\SL_n(\mathbb R)$, we may assume that
$$
    {\mathsf a}=
    \begin{pmatrix}
        \nu_1 & &\\
        & A &\\
        & & \nu_n
    \end{pmatrix},
    \qquad
    {\mathsf b}=
    g
    \begin{pmatrix}
        \mu_1 & &\\
        & B &\\
        & & \mu_n
    \end{pmatrix}
    g^{-1},
$$
where $A,B\in \op{GL}_{n-2}(\br)$ and $g\in \op{SL}_n(\br)$. Here $\nu_1$ and $\nu_n$ are the eigenvalues of
${\mathsf a}$ of maximum and minimum moduli, respectively, and $\mu_1$ and $\mu_n$ are the
corresponding eigenvalues of ${\mathsf b}$. With this normalization, 
$$
    (x_{\mathsf a}^+,V_{\mathsf a}^+)
    =
    \big([e_1],\langle e_1,\ldots,e_{n-1}\rangle\big),
    \qquad
    (x_{\mathsf a}^-,V_{\mathsf a}^-)
    =
    \big([e_n],\langle e_2,\ldots,e_n\rangle\big),
$$
$$
    (x_{\mathsf b}^+,V_{\mathsf b}^+)
    =
    \big([ge_1],\langle ge_1,\ldots,ge_{n-1}\rangle\big),
    \qquad
    (x_{\mathsf b}^-,V_{\mathsf b}^-)
    =
    \big([ge_n],\langle ge_2,\ldots,ge_n\rangle\big).
$$

The antipodality assumption implies that there exists $0<\theta<10^{-2}$ such that all
unit vectors in the directions of $g^{\pm1}e_1$ and $g^{\pm1}e_n$
have first and $n$-th coordinates of modulus at least $\theta$.

For $g\in \SL_n(\mathbb R)$, set
$$
    C_g:=2\|g\|\,\|g^{-1}\|\ge1.
$$
Then both $g$ and $g^{-1}$ act $C_g$-Lipschitzly on
$\mathbb P(\mathbb R^n)$: for unit vectors $v_1,v_2$,
$$
d_{\mathbb P}([gv_1],[gv_2])
\le
\left\|
\frac{gv_1}{\|gv_1\|}
-
\frac{gv_2}{\|gv_2\|}
\right\|
\le C_g\|v_1-v_2\|.
$$

Fix $m>1$ large and choose $\epsilon=\epsilon(m)>0$ satisfying
\begin{equation}\label{bound-epsilon}
0<\epsilon<
\min\left\{
\|A^{\pm m}\|,\,
\|B^{\pm m}\|,\,
(10^2C_g)^{-4},\,
\theta^{10}
\right\}.
\end{equation}

For $M\in \GL_{n-2}(\mathbb R)$, $\xi_1,\xi_n\in\mathbb R^\times$, and
$\epsilon>0$, set
$$
\mathcal B(M,\xi_1,\xi_n;\epsilon)
:=
\left\{
h
\begin{pmatrix}
t_1 & &\\
& M' &\\
& & t_n
\end{pmatrix}
h^{-1}
\;\middle|\;
\substack{
|t_1-\xi_1|<\epsilon,\ 
\|(M')^{\pm1}-M^{\pm1}\|<\epsilon\\
\|h-I_n\|<\epsilon,\ 
|t_n^{-1}-\xi_n^{-1}|<\epsilon
}
\right\}.
$$

Then define
\begin{align*}
    \Omega({\mathsf a}^m,\epsilon)
    &\coloneqq
    \mathcal B(A^m,\nu_1^m,\nu_n^m;\epsilon);\\
    \Omega({\mathsf b}^m,\epsilon)
    &\coloneqq
    g\,\mathcal B(B^m,\mu_1^m,\mu_n^m;\epsilon)\,g^{-1}.
\end{align*}
For the rest of the proof, if $h\in \SL_n(\mathbb R)$, we denote by
$$
    \ell_1(h)\ge \ell_2(h)
$$
the two largest moduli of the eigenvalues of $h$. We now record several
estimates.

\medskip
\noindent\textup{(i)}
Any two matrices
$ C\in \Omega({\mathsf a}^m,\epsilon)$ and $
    D\in \Omega({\mathsf b}^m,\epsilon)$
are biproximal. Moreover, their two largest eigenvalue moduli satisfy 
\begin{equation}\label{obs1-ineq1}
\min\Bigg\{
\frac{\ell_1(C^{\pm1})}{\ell_2(C^{\pm1})},
\frac{\ell_1(D^{\pm1})}{\ell_2(D^{\pm1})}
\Bigg\}
\geq E(m),
\end{equation}
where
\begin{equation}\label{obs1-ineq2}
E(m)\coloneqq
\frac{1}{4}
\min\Bigg\{
\frac{|\nu_1|^m}{\|A^m\|},\;
\frac{1}{|\nu_n|^m\|A^{-m}\|},\;
\frac{|\mu_1|^m}{\|B^m\|},\;
\frac{1}{|\mu_n|^m\|B^{-m}\|}
\Bigg\}.
\end{equation}
In addition, by the choice of $\epsilon>0$ and the definition of the projective metric
$d_{\mathbb P}$, the attracting and repelling fixed points and
hyperplanes of $C$ and $D$ satisfy 

$$
\begin{aligned}
\operatorname{dist}(x_C^\pm,x_{\mathsf a}^\pm)
&\le
\max_{\|h-I_n\|\le\epsilon}\max_{i=1,n}
d_{\mathbb P}([he_i],[e_i])
\le 10\epsilon
\le \sqrt\epsilon,\\
\operatorname{dist}(x_D^\pm,x_{\mathsf b}^\pm)
&\le
\max_{\|h-I_n\|\le\epsilon}\max_{i=1,n}
d_{\mathbb P}([ghe_i],[ge_i])
\le 10C_g\epsilon
\le \sqrt\epsilon,\\
\operatorname{dist}(\mathbb P(V_C^\pm),\mathbb P(V_{\mathsf a}^\pm))
&\le
\max_{\|h-I_n\|\le\epsilon}\max_{i=1,n}
d_{\mathbb P}([h^{-t}e_i],[e_i])
\le 10\epsilon
\le \sqrt\epsilon,\\
\operatorname{dist}(\mathbb P(V_D^\pm),\mathbb P(V_{\mathsf b}^\pm))
&\le
\max_{\|h-I_n\|\le\epsilon}\max_{i=1,n}
d_{\mathbb P}([(gh)^{-t}e_i],[g^{-t}e_i])
\le 10C_g\epsilon
\le \sqrt\epsilon,
\end{aligned}
$$
where $(\cdot)^{-t}$ denotes the inverse transpose.

\medskip
\noindent\textup{(ii)}
Recall that every unit vector representing one of the four lines
$$
    [ge_1],\ [ge_n],\ [g^{-1}e_1],\ [g^{-1}e_n]
$$
has first and $n$-th coordinates of modulus at least $\theta$.
 Define 
$$
    \mathcal C_1
    \coloneqq
    B_{5\sqrt{\epsilon}}(x_{\mathsf a}^+)
    \cup
    B_{5\sqrt{\epsilon}}(x_{\mathsf a}^-),
    \qquad
    \mathcal C_2
    \coloneqq
    B_{5\sqrt{\epsilon}}(x_{\mathsf b}^+)
    \cup
    B_{5\sqrt{\epsilon}}(x_{\mathsf b}^-).
$$

Since $\lim_{m\to\infty} E(m)^{1/m}>1$, after increasing $m_0$ we may
assume that, for every $m\ge m_0$, the following hold: 
\begin{enumerate}
    \item
    $$
        \min\left\{
        \ell_1({\mathsf a}^{\pm1})^{m\epsilon},
        \ell_1({\mathsf b}^{\pm1})^{m\epsilon}
        \right\}
        \geq
        2^{40}\|g\|^{10}\cdot\|g^{-1}\|^{10}\theta^{-10};
    $$
    \item for any $C\in \Omega({\mathsf a}^m,\epsilon)$, any
    $D\in \Omega({\mathsf b}^m,\epsilon)$, and any $p\in \mathbb Z^\ast$, the ratios
    $$
        \frac{\ell_1(C^{\pm1})}{\ell_2(C^{\pm1})},
        \qquad
        \frac{\ell_1(D^{\pm1})}{\ell_2(D^{\pm1})}
    $$
    are sufficiently large so that
    $$
    C^p
    \left(
    \mathbb P(\br^n)
  -
    \mathcal N_{\theta/2}\big(\mathbb P(V_C^+\cup V_C^-)\big)
    \right)
    \subset B_{\sqrt{\varepsilon}}(x_{C}^{\pm}) \subset
    \mathcal C_1;
    $$
    $$
    D^p
    \left(
    \mathbb P(\br^n)
-
    \mathcal N_{\theta/2}\big(\mathbb P(V_D^+\cup V_D^-)\big)
    \right)
    \subset B_{\sqrt{\varepsilon}}(x_{D}^{\pm})\subset
    \mathcal C_2.
    $$
\end{enumerate}

For the rest of the proof, fix $m\geq m_0$ and $\epsilon>0$ satisfying
\eqref{bound-epsilon}. Then
$$
\textup{dist}(\mathcal C_1,\mathbb{P}(V_D^\pm))
\geq
\textup{dist}(x_{\mathsf a}^\pm,\mathbb{P}(V_{\mathsf b}^+\cup V_{\mathsf b}^-))-6\sqrt{\epsilon}
\geq {\theta}/{2},
$$
and similarly,
$$ \textup{dist}(\mathcal C_2,\mathbb{P}(V_C^\pm))\geq {\theta}/{2}. $$
Consequently, for any $p\in \mathbb Z^\ast$,
\begin{equation}\label{pp-incl}
    C^p\mathcal C_2\subset \mathcal C_1,
    \qquad
    D^p\mathcal C_1\subset \mathcal C_2.
\end{equation}

Fix $h\in \op{SL}_n(\br)$ with $\|h-I_n\|<\epsilon$. For any
$[u_2]\in \mathcal C_2$, the first and $n$-th coordinates of the unit vector
$\frac{h^{-1}u_2}{\|h^{-1}u_2\|}$
have moduli at least $\theta/10$. Similarly, for any $[u_1]\in \mathcal C_1$,
the first and $n$-th coordinates of the unit vector
$\frac{h^{-1}g^{-1}u_1}{\|h^{-1}g^{-1}u_1\|}
$
have moduli at least $\theta/10$. For any
$C\in \Omega({\mathsf a}^m,\epsilon)$, any $D\in \Omega({\mathsf b}^m,\epsilon)$ and any
$p\in \mathbb Z^\ast$, \begin{align*} \frac{1}{4}\ell_1(C^p) &\leq ||C^p||\leq 4\ell_1(C^p);  \\
\frac{\ell_1(D^p)}{4||g||\cdot||g^{-1}||} &\leq ||D^p||\leq 4||g||\cdot||g^{-1}||\ell_1(D^p).\end{align*}

Thus, for any $[u_i]\in \mathcal C_i$, $i=1,2$ and $p\in \mathbb{Z}^{\ast}$, we have
\begin{align}\begin{split}\label{pp-ineq1}
    \|C^pu_2\|
    &\geq
    \frac{\theta\,\ell_1(C^p)\|u_2\|}{10^2}
    \geq
    \frac{\theta}{10^3}\|C^p\|\cdot \|u_2\|;\\
    \|D^pu_1\|
    &\geq
    \frac{\theta\,\ell_1(D^p)\|u_1\|}{10^2\|g\|\cdot\|g^{-1}\|}
    \geq
    \frac{\theta\,\|D^p\|\cdot \|u_1\|}
    {(10^2\|g\|\cdot\|g^{-1}\|)^2}.
\end{split}\end{align}

\medskip

\noindent\textup{(iii)}
Set
$$
    c:=(10^2\|g\|\cdot\|g^{-1}\|)^{-2}\theta.
$$
By the previous choices, for any
$$
    C,C'\in \Omega({\mathsf a}^m,\epsilon),
    \qquad
    D,D'\in \Omega({\mathsf b}^m,\epsilon),
$$
and any $p\in \mathbb Z^\ast$, we have
\begin{equation}\label{obs-iii}
    \frac{\|C^p\|}{\|{C'}^p\|^{1-\epsilon}}\geq \frac{1}{c},
    \qquad
    \frac{\|D^p\|}{\|{D'}^p\|^{1-\epsilon}}\geq \frac{1}{c}.
\end{equation}

Indeed, let $\delta\in\{-1,1\}$ and $p\in \mathbb N$. By the choice of $\epsilon$,
$$
\frac{\ell_1(C^{\delta p})}{\ell_1({C'}^{\delta p})}
=
\frac{\ell_1(C^\delta)^p}{\ell_1({C'}^\delta)^p}
\geq
\frac{1}{4^p},
\qquad
\frac{\ell_1(D^{\delta p})}{\ell_1({D'}^{\delta p})}
=
\frac{\ell_1(D^\delta)^p}{\ell_1({D'}^\delta)^p}
\geq
\frac{1}{4^p}.
$$
Moreover, since $\frac{1}{2}\ell_1({\mathsf a}^{\delta})^m \leq \ell_1(C^{\delta})\leq 2\ell_1({\mathsf a}^{\delta})^m$, $\frac{1}{2}\ell_1({\mathsf b}^{\delta})^m \leq \ell_1(D^{\delta})\leq 2\ell_1({\mathsf b}^{\delta})^m$, we conclude that
$$
    \frac{\ell_1({\mathsf a}^\delta)^{mp}}{2^{p+2}}
    \leq
    \|C^{\delta p}\|
    \leq
    2^{p+2}\ell_1({\mathsf a}^\delta)^{mp},
$$
and
$$
    \frac{\ell_1({\mathsf b}^\delta)^{mp}}
    {2^{p+2}\|g\|\cdot\|g^{-1}\|}
    \leq
    \|D^{\delta p}\|
    \leq
    2^{p+2}\|g\|\cdot\|g^{-1}\|\ell_1({\mathsf b}^\delta)^{mp}.
$$
Using these bounds for the matrices $C,D$ (and also for $C',D'$), we obtain
$$
\frac{\|C^{\delta p}\|}{\|{C'}^{\delta p}\|^{1-\epsilon}}
\geq
\frac{\|{C'}^{\delta p}\|^\epsilon}{4^{2p}}
\geq
\frac{\ell_1({\mathsf a}^\delta)^{mp\epsilon}}{2^{10p}}
\geq
\frac{1}{c},
$$
and similarly
$$
\frac{\|D^{\delta p}\|}{\|{D'}^{\delta p}\|^{1-\epsilon}}
\geq
\frac{\|{D'}^{\delta p}\|^\epsilon}{4^{2p}||g||^2\cdot||g^{-1}||^2}
\geq
\frac{\ell_1({\mathsf b}^\delta)^{mp\epsilon}}
{2^{10p}\|g\|^3\cdot\|g^{-1}\|^3}
\geq
\frac{1}{c}.
$$

We now prove the desired comparison estimate. Let $F_2=\langle a\rangle\ast \langle b\rangle$
and let $ \rho_1,\rho_2:F_2\to \op{SL}_n(\br)$
be representations such that
$$
    \rho_i(a)\in \Omega({\mathsf a}^m,\epsilon),
    \qquad
    \rho_i(b)\in \Omega({\mathsf b}^m,\epsilon),
    \qquad
    i=1,2.
$$
We show that $\rho_1$ and $\rho_2$ satisfy the estimate in
(\ref{prop-item2}), assuming $0<2\varepsilon<1$. Since
$\ell_1(\cdot)$ is conjugacy invariant, it suffices to consider cyclically reduced words.
For this, let
$$
    h=\prod_{j=1}^{l} a^{p_j}b^{q_j},
    \qquad
    p_j,q_j\neq 0.
$$
The other cyclically reduced forms are handled in the same way, after interchanging the
roles of $a$ and $b$ if necessary.
Using the ping-pong inclusions \eqref{pp-incl} and the estimates
in \eqref{pp-ineq1}, we obtain
\begin{equation}\label{pp-ineq3}
    \|\rho_1(h)\|
    \geq
    \prod_{j=1}^{l}
    \big(c\|\rho_1(a)^{p_j}\|\big)
    \big(c\|\rho_1(b)^{q_j}\|\big).
\end{equation}

Therefore, by \eqref{pp-ineq3} and the submultiplicativity of the operator norm,
$$
\frac{\|\rho_1(h)\|}{\|\rho_2(h)\|^{1-\epsilon}}
\geq
\prod_{j=1}^{l}
\left(
c\frac{\|\rho_1(a)^{p_j}\|}
{\|\rho_2(a)^{p_j}\|^{1-\epsilon}}
\right)
\left(
c\frac{\|\rho_1(b)^{q_j}\|}
{\|\rho_2(b)^{q_j}\|^{1-\epsilon}}
\right).
$$
By \eqref{obs-iii}, each factor on the right-hand side is at least $1$.
Hence
$${\|\rho_1(h)\|}\ge {\|\rho_2(h)\|^{1-\epsilon}}.
$$
Applying this inequality to $h^r$, $r\geq 1$, and using the fact that
$$
    \lim_{r\to\infty}\|\rho_i(h^r)\|^{1/r}
    =
    \ell_1(\rho_i(h)),
$$
we get
$$
    \ell_1(\rho_1(h))
    \geq
    \ell_1(\rho_2(h))^{1-\epsilon}.
$$
Since $\rho_1$ and $\rho_2$ were arbitrary, switching their roles gives
$$
    \ell_1(\rho_1(h))
    \leq
    \ell_1(\rho_2(h))^{1/(1-\epsilon)}
    \leq
    \ell_1(\rho_2(h))^{1+2\epsilon},
$$
provided $\epsilon$ is sufficiently small. Taking logarithms, we obtain
$$
    (1-\epsilon)\lambda_1(\rho_2(h))
    \leq
    \lambda_1(\rho_1(h))
    \leq
    (1+2\epsilon)\lambda_1(\rho_2(h)).
$$
This proves the desired
estimate. Finally, since ${\mathsf a}^m$ and ${\mathsf b}^m$ are biproximal, Lemma \ref{fact}, together with
the openness of the projective Anosov property \cite[Theorem 5.13]{GuichardWienhard12}, gives an open neighborhood $\Omega_m\subset \textup{Hom}(F_2,\op{SL}_n(\br))$
of the representation  $ a\mapsto {\mathsf a}^m,
    b\mapsto {\mathsf b}^m,$
consisting entirely of projective Anosov representations and contained in $\Omega({\mathsf a}^m,\epsilon)\times \Omega({\mathsf b}^m,\epsilon)$.
This completes the proof.
\end{proof}

The preceding proposition controls the first Jordan coordinate. For the proof of
Theorem \ref{m}, we need a corresponding estimate for the full Jordan projection.
This follows by applying Proposition \ref{estimate} to the fundamental representations.

Recall that $g\in \SL_n(\mathbb R)$ is called \emph{loxodromic} if it is
conjugate to a diagonal matrix whose eigenvalues have pairwise distinct
moduli. For $1\le k\le n-1$, the $k$-th exterior power representation
$\tau_k=\wedge^k\mathbb R^n$ is the $k$-th fundamental representation of
$\SL_n(\mathbb R)$, and
$$
    \lambda_1(\tau_k(g))
    =
    \lambda_1(g)+\cdots+\lambda_k(g).
$$
If two loxodromic elements have antipodal fixed flags in the full flag
variety, then their images under each $\tau_k$ are biproximal with
antipodal attracting and repelling flags. Applying Proposition
\ref{estimate} simultaneously to all $\tau_k$, and using that the full
Jordan projection is determined by the partial sums
$\lambda_1+\cdots+\lambda_k$, we obtain the following:

\begin{proposition}
    [Perturbative control of the Jordan projection]\label{cor-1}
Let ${\mathsf a},{\mathsf b}\in \op{SL}_n(\br)$, $n\geq 2$, be two loxodromic elements with
antipodal fixed flags in the full flag space $\mathcal F(\br^n)$. Given $\epsilon>0$,
there exist $m>1$ and open neighborhoods $\Omega_1,\Omega_2\subset \op{SL}_n(\br)$ of ${\mathsf a}^m$ and ${\mathsf b}^m$, respectively, such that any two representations
$    \rho,\rho':F_2\to \op{SL}_n(\br)$
with  $ \rho(a),\rho'(a)\in \Omega_1$ and $\rho(b),\rho'(b)\in \Omega_2$,
are Borel-Anosov and satisfy
$$
    \|\lambda(\rho(h))-\lambda(\rho'(h))\|
    \leq
    \epsilon\|\lambda(\rho(h))\| \quad\text{for any $h\in F_2$.}
$$
\end{proposition}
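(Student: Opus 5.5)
The plan is to deduce the statement from Proposition \ref{estimate}, applied simultaneously to the fundamental representations $\tau_k=\wedge^k$, $1\le k\le n-1$, and then reassemble the full Jordan projection from its partial sums.

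\textbf{Step 1: hypotheses for each $\tau_k$.} Since $\mathsf a$ is loxodromic, $\tau_k(\mathsf a)$ is biproximal. Its top eigenline is $\wedge^k$ of the attracting $k$-plane of $\mathsf a$. Its repelling hyperplane is the set of $k$-vectors whose wedge with the repelling $(n-k)$-plane vanishes. The same holds for $\mathsf b$.

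Antipodality of the full flags of $\mathsf a^{\pm}$ and $\mathsf b^{\pm}$ says that each $k$-plane of one flag is transverse to the $(n-k)$-plane of the other. This translates exactly into pairwise antipodality of the four $(1,N_k-1)$-flags in $\mathcal F_{1,N_k-1}(\wedge^k\mathbb R^n)$, where $N_k=\binom nk$.

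\textbf{Step 2: common $m$ and neighbourhoods.} Fix $\epsilon'>0$, to be chosen below. Proposition \ref{estimate} gives $m_0^{(k)}$ for $\tau_k(\mathsf a),\tau_k(\mathsf b)$. Take $m\ge\max_k m_0^{(k)}$ and use $\tau_k(\mathsf a^m)=\tau_k(\mathsf a)^m$.

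Proposition \ref{estimate} then yields neighbourhoods $\Omega_m^{(k)}$ in $\operatorname{Hom}(F_2,\SL_{N_k}(\mathbb R))$. These are not products a priori, but each contains a product neighbourhood $U_k\times V_k$ of $(\tau_k(\mathsf a^m),\tau_k(\mathsf b^m))$. By continuity of $\tau_k$, there are neighbourhoods $\Omega_1\ni\mathsf a^m$ and $\Omega_2\ni\mathsf b^m$ in $\SL_n(\mathbb R)$ with $\tau_k(\Omega_1)\subset U_k$ and $\tau_k(\Omega_2)\subset V_k$ for all $k$.

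For $\rho$ with $\rho(a)\in\Omega_1$ and $\rho(b)\in\Omega_2$, each $\tau_k\circ\rho$ is projective Anosov. Projective Anosovness of $\tau_k\circ\rho$ is equivalent to $\alpha_k(\mu(\rho(\gamma)))\ge c|\gamma|-c'$, since $\alpha_1(\mu(\tau_k g))=\alpha_k(\mu(g))$. Hence $\rho$ is Borel--Anosov.

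\textbf{Step 3: reassembling the Jordan projection.} Set $s_k(h)=\lambda_1(\tau_k\rho(h))=\lambda_1+\dots+\lambda_k$ of $\rho(h)$, and similarly $s_k'$ for $\rho'$. Proposition \ref{estimate}(\ref{prop-item2}) gives
\[
|s_k(h)-s_k'(h)|\le\epsilon'|s_k(h)|.
\]
Since $\lambda_k=s_k-s_{k-1}$ (with $s_0=s_n=0$), we get
\[
\|\lambda(\rho(h))-\lambda(\rho'(h))\|\le 2\sqrt n\,\epsilon'\max_k|s_k(h)|.
\]
Moreover $|s_k|\le k\lambda_1\le n\|\lambda(\rho(h))\|$. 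Choosing $\epsilon'=\epsilon/(2n^{3/2})$ therefore gives the claimed bound.

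\textbf{Main obstacle.} The step needing most care is Step 1: checking that antipodality in the full flag variety gives exactly the pairwise antipodality required for each $\wedge^k$. Two points must be verified.

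First, the top eigenvalue of $\wedge^k$ of a loxodromic element is simple, which holds because $\lambda_k>\lambda_{k+1}$. Second, the four $(1,N_k-1)$-flags must be pairwise antipodal, including the pairs $(x^+_{\mathsf a},V_{\mathsf a}^-)$ of a single element. This reduces to the transversality of $\xi^k$ and $\xi^{n-k}$ for the relevant pairs of full flags. For a single loxodromic element this transversality is automatic, since its attracting and repelling flags are always opposite.

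Beyond that, the argument is bookkeeping in shrinking the neighbourhoods so that a single $m$ and a single pair $(\Omega_1,\Omega_2)$ serve all $k$ at once.
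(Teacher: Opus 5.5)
Your proposal is correct and is essentially the paper's argument: apply Proposition~\ref{estimate} to $\tau_k(\mathsf a),\tau_k(\mathsf b)$ for each $1\le k\le n-1$, then recover $\lambda$ from the partial sums $\lambda_1+\cdots+\lambda_k$. You spell out what the paper leaves implicit: a common $m\ge\max_k m_0^{(k)}$, product neighbourhoods pulled back through $\tau_k$, Borel--Anosovness via $\alpha_1(\mu(\tau_k g))=\alpha_k(\mu(g))$, and the choice $\epsilon'=\epsilon/(2n^{3/2})$ using $s_k\ge 0$.
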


\section{Robust failure of limit-cone rigidity}
We now prove Theorem \ref{m}. The construction has two stages. First we
build a reducible Borel-Anosov representation in odd dimension for which a
single Jordan coordinate records the quotient map onto $\mathbb Z$ and
therefore vanishes on its kernel. We then apply the uniform estimate of
Section 2 to preserve the resulting separation of directions under small
perturbations, and finally pass to a nonempty open set of Zariski-dense
representations.

Let $F_2=\langle a\rangle\ast \langle b\rangle$ be a free group 
and let $$ {\chi}:F_2\to \mathbb Z$$
be the unique homomorphism satisfying  ${\chi}(a)=1$ and
    ${\chi}(b)=0$.
Then  $${\mathcal N}\coloneqq\ker{\chi}$$
is the normal closure of $b$ in $F_2$.
Write $ n=2d+1$ for $d\geq 1$.

\LimitConeCounterexample*

\begin{proof}
Fix a convex cocompact representation $j:F_2\to \op{SL}_2(\br)$, equivalently a projective Anosov representation.
Since $j$ is projective Anosov, there exists $c>0$ such that
$$
    \lambda_1(j(\gamma))\ge c|\gamma|_\infty
    \qquad(\gamma\in F_2).
$$
Since $\chi$ is a homomorphism and $|\chi(\eta)|\le |\eta|$ for every $\eta\in F_2$, for each $k\in\N$ we have
$$
    k|\chi(\gamma)|
    =
    |\chi(\gamma^k)|
    \le
    |\gamma^k|.
$$
Hence
$$
    |\chi(\gamma)|\le |\gamma|_\infty .
$$
It follows that
\be\label{sup}
    \sup_{\gamma\in F_2-\{e\}}
    \frac{|\chi(\gamma)|}{\lambda_1(j(\gamma))}
    <\infty.
\ee

Let
$$
    \tau_{2d}:\op{SL}_2(\br)\to \op{SL}_{2d}(\br)
$$
be the unique irreducible representation of dimension $2d$, up to conjugation. Then
$$
    \tau_{2d}\circ j:F_2\to \op{SL}_{2d}(\br)
$$
is Borel-Anosov. Moreover, for any $\gamma\in F_2$,
$$
    \lambda_d(\tau_{2d}(j(\gamma)))=\lambda_1(j(\gamma)).
$$

For $\kappa>0$, define a representation $\rho_\kappa:F_2\to \op{SL}_{2d+1}(\br)$ 
by
\be\label{rk}
    \rho_\kappa(\gamma)
    =
    \begin{pmatrix}
    e^{-\frac{\kappa{\chi}(\gamma)}{2d}}
    \tau_{2d}(j(\gamma))
    &\\
    &
    e^{\kappa{\chi}(\gamma)}
    \end{pmatrix},
    \qquad
    \gamma\in F_2.
\ee 
By \eqref{sup}, we may choose $\kappa>0$ sufficiently small so that
\be\label{choice}
    \kappa\frac{2d+1}{2d}    \sup_{\gamma\in F_2\smallsetminus\{e\}}
    \frac{|{\chi}(\gamma)|}{\lambda_1(j(\gamma))}
    <1.
\ee 
By \cite{lahn2025},
this condition ensures that $\rho_\kappa$ is Borel-Anosov in
$\op{SL}_{2d+1}(\br)$. 

We claim that
\be\label{middle} \lambda_{d+1}(\rho_\kappa(\gamma))
    =
    \kappa{\chi}(\gamma)
\quad\text{ for any $\gamma\in F_2$.}\ee 
Indeed, let $\gamma\ne e$, and set $L:=\lambda_1(j(\gamma))>0$ and $q:=\chi(\gamma)$.
The Jordan coordinates of $\tau_{2d}(j(\gamma))$ are
$$
    (2d-1)L,\ (2d-3)L,\ldots, L,\ -L,\ldots, -(2d-3)L,\ -(2d-1)L.
$$
After multiplying this $2d$-dimensional block by
$e^{-\kappa q/(2d)}$, all these coordinates are shifted by
$-\kappa q/(2d)$. Hence the middle two coordinates of the twisted
$\tau_{2d}$-block are
$$
    L-\frac{\kappa q}{2d}
    \qquad\text{and}\qquad
    -L-\frac{\kappa q}{2d}.
$$
The remaining one-dimensional block has logarithmic eigenvalue modulus
$$
    \kappa q.
$$
By the choice of $\kappa$ as in \eqref{choice}, we have
$$
    \frac{2d+1}{2d}\kappa |q|<L.
$$
Equivalently,
$$
    -L-\frac{\kappa q}{2d}
    <
    \kappa q
    <
    L-\frac{\kappa q}{2d}.
$$
Thus the coordinate $\kappa q$ lies strictly between the $d$-th and
$(d+1)$-st coordinates of the twisted $2d$-dimensional block. Therefore,
after arranging all $2d+1$ logarithmic eigenvalue moduli in decreasing
order, the middle coordinate is precisely
$$
    \lambda_{d+1}(\rho_\kappa(\gamma))=\kappa\chi(\gamma).
$$
The case $\gamma=e$ is immediate.

Recall that any infinite order element of a Borel-Anosov group is loxodromic. Since $\rho_\kappa(a)$ and $\rho_\kappa(b)$ are loxodromic with antipodal fixed flags in
the full flag space $\mathcal F(\br^n)$, \Cref{cor-1} gives an integer $m>0$
and an open neighborhood $\Omega_m\subset \textup{Hom}(F_2,\op{SL}_{2d+1}(\br))$
of the representation $\rho_\kappa':F_2\to \op{SL}_{2d+1}(\br)$ defined by
$$
    \rho_\kappa'(a)=\rho_\kappa(a^m),
    \qquad
    \rho_\kappa'(b)=\rho_\kappa(b^m),
$$
such that any representation in $\Omega_m$ is Borel-Anosov and, for any
$\sigma\in \Omega_m$ and any $w\in F_2$,
$$
    \|\lambda(\sigma(w))-\lambda(\rho_\kappa'(w))\|
    \leq
    \frac{\kappa}{80d^2\,\lambda_1(j(a))}
    \|\lambda(\rho_\kappa'(w))\|.
$$
We shall use the elementary inequality
$$
\left\|
\frac{x}{\|x\|}-\frac{y}{\|y\|}
\right\|
\le
2\frac{\|x-y\|}{\|y\|}
$$
for nonzero vectors $x,y$. Applying this with
$x=\lambda(\sigma(w))$ and $y=\lambda(\rho_\kappa'(w))$, and using the
preceding estimate,  we obtain that for any non-trivial $w\in F_2$,
\begin{equation}\label{main1-ineq1}
\left\|
\frac{\lambda(\sigma(w))}{\|\lambda(\sigma(w))\|}
-
\frac{\lambda(\rho_\kappa'(w))}{\|\lambda(\rho_\kappa'(w))\|}
\right\|
\leq
\frac{\kappa}{40d^2\,\lambda_1(j(a))}.
\end{equation}

Fixing $\sigma\in \Omega_m$, we claim 
\be\label{mL}
    \mathcal L_{\sigma({\mathcal N})}   \subsetneq
    \mathcal L_{\sigma(F_2)}\ee

We will first show:
\begin{equation}\label{hN}
\inf_{h\in {\mathcal N}- \{e\}}
\left\|
\frac{\lambda(\sigma(h))}{\|\lambda(\sigma(h))\|}
-
\frac{\lambda(\sigma(a))}{\|\lambda(\sigma(a))\|}
\right\|
\ge
\frac{\kappa}{5d^2\,\lambda_1(j(a))}.
\end{equation}
 Let
 $ h=\prod_{j=1}^{l} a^{s_j}b^{t_j} \in {\mathcal N} $ be any non-trivial element.
Then $ s_1+\cdots+s_l=0$ and hence, by the definition of $\rho_\kappa'$,
$$
    \lambda_{d+1}(\rho_\kappa'(h))=\lambda_{d+1} ( \prod_{j=1}^{l}\rho_{\kappa}(a^{ms_j}b^{m t_j}))=\kappa \,{\chi}( \prod_{j=1}^{l} a^{ms_j}b^{mt_j})=\kappa m\sum_{i=1}^l s_i=0.
$$
Therefore \eqref{main1-ineq1} implies:
$$
    \sup_{h\in \mathcal N\smallsetminus\{e\}}
    \frac{|\lambda_{d+1}(\sigma(h))|}
    {\|\lambda(\sigma(h))\|}
    \leq
    \frac{\kappa}{40d^2\,\lambda_1(j(a))}.
$$

On the other hand, since $\chi(a)=1$, we obtain from \eqref{middle} and the choice of $\kappa$:
$$
    \lambda_{d+1}(\rho_\kappa(a))=\kappa \quad\text{and}\quad \frac{\kappa}{2d}\le \frac{\lambda_1(j(a))}{2d+1}.
$$
The largest absolute value of a Jordan coordinate of
$\tau_{2d}(j(a))$ is $(2d-1)\lambda_1(j(a))$. Since the
$2d$-dimensional block in $\rho_\kappa(a)$ is shifted by
$-\kappa/(2d)$, and the remaining one-dimensional block contributes
$\kappa$. Thus, for every $1\leq j\leq 2d+1$,
\[
\begin{aligned}
|\lambda_j(\rho_\kappa(a))|
&\leq |\lambda_1(\tau_{2d}(j(a)))|+\frac{\kappa}{2d}\\
&\leq \left((2d-1)+\frac{1}{2d+1}\right)\lambda_1(j(a))\\
&\leq 2d\lambda_1(j(a)).
\end{aligned}
\]
Consequently,
\[
\|\lambda(\rho_\kappa(a))\|
\leq 2d\sqrt{2d+1}\,\lambda_1(\rho_\kappa(a))
\leq 4d^2\lambda_1(j(a)).
\]
Since $\rho'_\kappa(a)=\rho_\kappa(a^m)$, we conclude that
$$
    \frac{\lambda_{d+1}(\rho_\kappa'(a))}
    {\|\lambda(\rho_\kappa'(a))\|}
    =
    \frac{m\lambda_{d+1}(\rho_\kappa(a))}
    {\|m\lambda(\rho_\kappa(a))\|}
    \geq
    \frac{\kappa}{4d^2\,\lambda_1(j(a))}.
$$
Again applying \eqref{main1-ineq1}, we get
$$
    \frac{\lambda_{d+1}(\sigma(a))}
    {\|\lambda(\sigma(a))\|}
    \ge
    \frac{\lambda_{d+1}(\rho_\kappa'(a))}
    {\|\lambda(\rho_\kappa'(a))\|}
    -
    \frac{\kappa}{40d^2\,\lambda_1(j(a))}
    \ge
    \frac{9\kappa}{40d^2\,\lambda_1(j(a))}.
$$
It follows that, for every $h\in\mathcal N-\{e\}$,
$$
\begin{aligned}
\left\|
\frac{\lambda(\sigma(a))}{\|\lambda(\sigma(a))\|}
-
\frac{\lambda(\sigma(h))}{\|\lambda(\sigma(h))\|}
\right\|
&\ge
\left|
\frac{\lambda_{d+1}(\sigma(a))}
{\|\lambda(\sigma(a))\|}
-
\frac{\lambda_{d+1}(\sigma(h))}
{\|\lambda(\sigma(h))\|}
\right| \\
&\ge
\frac{9\kappa}{40d^2\,\lambda_1(j(a))}
-
\frac{\kappa}{40d^2\,\lambda_1(j(a))} \\
&=
\frac{\kappa}{5d^2\,\lambda_1(j(a))}.
\end{aligned}
$$
This proves \eqref{hN}.
It follows that the direction of $\lambda(\sigma(a))$ is not
contained in the set of accumulation directions of the Jordan projections of
elements of $\sigma(\mathcal N)$. By Theorem \ref{thm:Benoist-Jordan-limit-cone} below,
 \eqref{hN} implies \eqref{mL}.

Since every representation in $\Omega_m$ is Borel-Anosov and $F_2$ has no torsion, it is faithful. Therefore
$$
\sigma(F_2)/\sigma(\mathcal N)\simeq F_2/\mathcal N\simeq \mathbb Z.
$$
Finally, by \cite[Theorem 8.2]{burger_Inv}, Zariski-dense representations
form an open dense subset of
\[
    \operatorname{Hom}(F_2,\SL_{2d+1}(\br)).
\]
After shrinking $\Omega_m$, we may therefore choose a nonempty open subset
$\Omega\subset\Omega_m$ consisting entirely of Zariski-dense
representations.
For every $\sigma\in\Omega$, the subgroup $\sigma(\mathcal N)$ is
normal and infinite in $\sigma(F_2)$. Its Zariski closure is therefore a
normal infinite algebraic subgroup of $\SL_{2d+1}(\br)$. Since
$\SL_{2d+1}(\br)$ is simple, this closure is all of
$\SL_{2d+1}(\br)$.
\end{proof}

\begin{Rmk}
The construction necessarily uses the fact that the opposition involution
is nontrivial for $\SL_n(\br)$; see
\Cref{lem:symmetric-directions-normality}.
\end{Rmk}

\section{Coamenability and symmetrized critical exponents}
Let $G$ be a connected semisimple real algebraic group.
Let $A$ be a maximal real split torus of $G$.
Let $\fg$ and $\fa$ respectively denote the Lie algebras of $G$
and $A$. 

\subsection*{Cartan and Jordan projections}
Fix a positive Weyl chamber $\fa^+ \subset \fa$ and set $A^+=\exp \fa^+$, and a maximal compact subgroup $K< G$ such that the Cartan decomposition $G=K A^+ K$ holds in the sense that
 for any $g\in G$, there exists a unique element $\mu(g)\in \fa^+$ such that $$g\in K \exp \mu(g) K.$$
The map $G\to \fa^+$ given by $g\mapsto \mu(g)$ is called the Cartan projection. 
Its basic property (\cite[Lemma 4.6]{Benoist97}) is that  for any compact subset $Q \subset G$, there exists $C=C(Q)>0$ such that for all $g \in G$, \be  \label{lem.cptcartan} \sup_{q_1, q_2\in Q} \| \mu(q_1gq_2) -\mu(g)\| \le C .\ee 

We also use the Jordan projection. If $g=g_e g_h g_u$ is the
multiplicative Jordan decomposition of $g$, with $g_e$ elliptic,
$g_h$ hyperbolic, and $g_u$ unipotent, then $g_h$ is conjugate to a
unique element $\exp\lambda(g)$ with $\lambda(g)\in\fa^+$. The vector
$\lambda(g)$ is called the Jordan projection of $g$. Equivalently,
\[
    \lambda(g)=\lim_{m\to\infty}\frac{1}{m}\mu(g^m).
\]

\subsection*{Opposition involution}
Let $N_K(A)$ denote the normalizer of $A$ in $K$. Fix an element $w_0\in N_K(A) $ of order $2$  representing the longest Weyl element so that $\op{Ad}_{w_0}\mathfrak a^+= -\mathfrak a^+$. 
The map \be\label{oppo} \i= -\op{Ad}_{w_0}:\fa\to \fa\ee  is called the opposition involution. 
  It preserves $\fa^+$.
 We have 
\be\label{inverse}  \mu(g^{-1})=\i (\mu(g))\quad \text{ and } \quad \la(g^{-1})=\i (\la(g))\quad\text{ for all $g\in G$. }
\ee
It follows that the limit cone $\L_\Ga$ of any closed subgroup $\Ga<G$, defined in \eqref{lc}, is preserved under $\i$.

We also have the following theorem of Benoist:
\begin{theorem} {\cite{Benoist97}}\label{thm:Benoist-Jordan-limit-cone}
Let $\Gamma<G$ be a Zariski dense discrete subgroup. Then
$\mathcal L_\Gamma$ is the smallest closed cone containing the Jordan
projections of  elements of $\Gamma$. Equivalently, it is the smallest
closed cone containing the Jordan projections of the loxodromic elements of $\Gamma$.
\end{theorem}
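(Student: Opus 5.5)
The inclusion of the Jordan cone in $\L_\Gamma$ is immediate, and the reverse inclusion rests on one uniform comparison between Cartan and Jordan projections. Let $\mathcal J_\Gamma$ denote the smallest closed cone containing $\{\lambda(\gamma):\gamma\in\Gamma\}$, and $\mathcal J^{\mathrm{lox}}_\Gamma$ the analogous cone built only from loxodromic $\gamma$. Clearly $\mathcal J^{\mathrm{lox}}_\Gamma\subset\mathcal J_\Gamma$, so I will prove $\mathcal J_\Gamma\subset\L_\Gamma\subset\mathcal J^{\mathrm{lox}}_\Gamma$.

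\emph{Step 1: $\mathcal J_\Gamma\subset\L_\Gamma$.} From the definition \eqref{lc}, $\L_\Gamma$ is a closed cone. For $\gamma\in\Gamma$ we have $\lambda(\gamma)=\lim_{m}\frac1m\mu(\gamma^m)$. Taking $t_m=1/m$ and $\gamma_m=\gamma^m$ in \eqref{lc} shows $\lambda(\gamma)\in\L_\Gamma$. Hence $\mathcal J_\Gamma\subset\L_\Gamma$, and this step does not use Zariski density.

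\emph{Step 2: the uniform lemma.} For the inclusion $\L_\Gamma\subset\mathcal J^{\mathrm{lox}}_\Gamma$ I would invoke the Abels--Margulis--Soifer/Benoist lemma in the following form. Since $\Gamma$ is Zariski dense, there exist a finite set $F\subset\Gamma$ and a constant $C>0$ with the following property: for every $\gamma\in\Gamma$ there is $f\in F$ such that $f\gamma$ is loxodromic and
\[
\norm{\lambda(f\gamma)-\mu(\gamma)}\le C.
\]

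\emph{Step 3: conclusion.} Given $v=\lim_i t_i\mu(\gamma_i)$ with $t_i\to0$, choose $f_i\in F$ as in Step 2. Then
\[
\norm{t_i\lambda(f_i\gamma_i)-t_i\mu(\gamma_i)}\le t_iC\to0,
\]
so $v=\lim_i t_i\lambda(f_i\gamma_i)$ with each $f_i\gamma_i$ loxodromic. Therefore $v\in\mathcal J^{\mathrm{lox}}_\Gamma$, and all three cones coincide, which gives both formulations of the statement.

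\emph{Step 4: proof of the lemma (main obstacle).} I would pass to Tits' irreducible representations $\rho_1,\dots,\rho_r$ of $G$. Each has a highest weight $\chi_k$ with a one-dimensional weight space, and the values $\chi_k(\mu(g))=\log\norm{\rho_k(g)}$ (for a $K$-invariant norm) determine $\mu(g)$; likewise the top eigenvalue logarithms $\chi_k(\lambda(g))$ determine $\lambda(g)$. In each $\rho_k$, write $\rho_k(\gamma)$ in Cartan form; it sends the complement of a neighbourhood of a repelling hyperplane into a neighbourhood of an attracting line. If some $f\in\Gamma$ carries the attracting line of $\rho_k(\gamma)$ uniformly away from its repelling hyperplane, then $\rho_k(f\gamma)$ is $(r,\varepsilon)$-proximal for fixed $r,\varepsilon$. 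Benoist's estimate for $(r,\varepsilon)$-proximal maps then gives
\[
\bigl|\log\ell_1(\rho_k(f\gamma))-\log\norm{\rho_k(f\gamma)}\bigr|\le C_1,
\]
and \eqref{lem.cptcartan} gives $\norm{\mu(f\gamma)-\mu(\gamma)}\le C(F)$. The difficulty is the simultaneous transversality: we need one $f$ drawn from a fixed finite $F$ that works for all $k$ and all possible pairs (line, hyperplane). Zariski density of $\Gamma$ makes each bad set $\{g:\rho_k(g)x\in \mathbb P(V)\}$ a proper subvariety. Compactness of the space of pairs $(x,V)$ then lets one choose finitely many elements $f$ and a margin $r>0$ so that every configuration is avoided by some $f\in F$ with margin $r$. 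Being proximal in every $\rho_k$, the element $f\gamma$ is loxodromic.
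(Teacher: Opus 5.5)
Your Steps 1--3 are correct and follow Benoist's route. The paper gives no proof here and simply cites \cite{Benoist97}. The inclusion of the Jordan cone in $\mathcal L_\Gamma$ is immediate, and the reverse inclusion follows from the Abels--Margulis--Soifer/Benoist lemma of Step 2, which is a true and citable result. If you just invoke it, you are done.

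The gap is in Step 4, your proof of that lemma, which does not work for all $\gamma$. You let the contraction come from $\rho_k(\gamma)$ itself. The claim that $\rho_k(\gamma)$ sends the complement of a neighbourhood of a repelling hyperplane into a small neighbourhood of an attracting line needs the ratio of the first two singular values of $\rho_k(\gamma)$ to be large. So does the claim that a merely transversal $f$ then makes $\rho_k(f\gamma)$ $(r,\varepsilon)$-proximal. Here "large" means the simple roots on $\mu(\gamma)$ exceed a threshold depending on $r$, $\varepsilon$ and $F$. When $\mu(\gamma)$ stays near a wall of $\mathfrak a^+$, there is no such line and hyperplane. Multiplying by an element of a finite set chosen only for transversality cannot create the missing gap. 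Such $\gamma$ occur for general Zariski dense discrete groups; for a lattice, $\mathcal L_\Gamma=\mathfrak a^+$. They are exactly the sequences that produce the points $v\in\mathcal L_\Gamma\cap\partial\mathfrak a^+$ in Step 3. So Step 4 covers $v\in\operatorname{int}\mathfrak a^+$, which would suffice for Zariski dense subgroups of Borel--Anosov groups, but not the statement as given.

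The missing idea, which is the actual Abels--Margulis--Soifer mechanism, is to let a fixed element of $\Gamma$ supply the contraction and use transversality only to position $\gamma$. Fix a loxodromic $h\in\Gamma$, with attracting line $x_k$ and repelling hyperplane $H_k$ for $\rho_k(h)$. Let $F_0$ and $r_0$ be your transversality data.

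Given $\gamma$, let $e_k$ be a top singular vector of $\rho_k(\gamma)$, so that $\|\rho_k(\gamma)u\|\ge\|\rho_k(\gamma)\|\,|\langle u,e_k\rangle|$. Choose $f_1\in F_0$ with $\rho_k(f_1)x_k$ at distance at least $r_0$ from $\mathbb P(e_k^\perp)$. Then choose $f_2\in F_0$ with $\rho_k(f_2\gamma f_1)x_k$ at distance at least $r_0$ from $\mathbb P(H_k)$. Both choices are made for all $k$ simultaneously.

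With these choices, $\rho_k(f_2\gamma f_1)$ is uniformly Lipschitz on a small ball $B$ around $x_k$ and carries $B$ to distance at least $r_0/2$ from $\mathbb P(H_k)$. Hence, for $N$ large depending only on $h$, $F_0$ and $r_0$, the map $\rho_k(h^Nf_2\gamma f_1)$ sends $B$ strictly into itself as a contraction. Therefore $h^Nf_2\gamma f_1$ is loxodromic. Its top eigenvalue in $\rho_k$ has modulus comparable to $\|\rho_k(h^N)\|\,\|\rho_k(\gamma)\|$.

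This element is conjugate to $f\gamma$ with $f=f_1h^Nf_2$. So your Step 2 lemma holds for every $\gamma\in\Gamma$ with $F=F_0h^NF_0$, and Steps 1--3 then go through unchanged.
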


\subsection*{Symmetric directions from normality}
The opposition involution plays an important role in what follows. The
reason is that normality always forces the $\i$-symmetric part of the
ambient limit cone to appear in the normal subgroup. Thus the phenomenon in
\Cref{m} is possible only in directions which are not fixed by $\i$
In particular, this explains why the construction requires a group for
which the opposition involution is non-trivial.

For any cone $\cal C\subset \fa^+$,
let ${\cal C}^{\i}=\{x\in \cal C:\i(x)=x\}$ be its $\i$-fixed part.
\begin{lemma}
\label{lem:symmetric-directions-normality}
Let $G$ be a connected semisimple real algebraic group, and let
$\Gamma<G$ be a Zariski dense discrete subgroup. Let $N\lhd \Gamma$ be a
Zariski dense normal subgroup. Then
$$\L_N^{\i}=\L_\Ga^{\i}.$$
\end{lemma}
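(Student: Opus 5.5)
The inclusion $\L_N^{\i}\subset\L_\Ga^{\i}$ is immediate from $N<\Ga$, so the content is the reverse inclusion. By Theorem \ref{thm:Benoist-Jordan-limit-cone} applied to $N$ (which is Zariski dense and discrete), $\L_N$ is the smallest closed cone containing $\{\lambda(h):h\in N\}$. The plan is to produce, for every loxodromic $\gamma\in\Ga$, elements of $N$ whose normalized Jordan projections converge to the symmetrized direction $\lambda(\gamma)+\i(\lambda(\gamma))$. The natural candidates are commutators $h_k=\gamma^k\,\eta\gamma^{-k}\eta^{-1}$ with $\eta\in N$ fixed. These lie in $N$ by normality, since $\gamma^k\eta\gamma^{-k}\in N$. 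Since $\lambda(\gamma^{-1})=\i(\lambda(\gamma))$, one expects $\lambda(h_k)\approx k\lambda(\gamma)+k\,\i(\lambda(\gamma))$.

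To make this rigorous, I would choose $\eta\in N$ so that $\eta$ puts the attracting and repelling flags of $\gamma$ in general position. This means the flags $\eta\xi^\pm_\gamma$ are transverse to $\xi^\mp_\gamma$ in the appropriate sense. Such $\eta$ exists because $N$ is Zariski dense and the bad set is a proper Zariski-closed condition. Then $h_k=\gamma^k\cdot(\eta\gamma^{-k}\eta^{-1})$ is a product of two loxodromic elements, namely $\gamma^k$ and the conjugate $\eta\gamma^{-k}\eta^{-1}$, whose attracting and repelling flags are mutually transverse. By the standard product estimate of Benoist (the $(r,\varepsilon)$-proximality estimates of \cite{Benoist97}), $\lambda(h_k)=\lambda(\gamma^k)+\lambda(\eta\gamma^{-k}\eta^{-1})+O(1)=k\bigl(\lambda(\gamma)+\i\lambda(\gamma)\bigr)+O(1)$, with the error bounded independently of $k$. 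Dividing by $k$ shows that $\lambda(\gamma)+\i\lambda(\gamma)\in\L_N$.

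Now let $v\in\L_\Ga^{\i}$. By Theorem \ref{thm:Benoist-Jordan-limit-cone} for $\Ga$, $v$ is a limit of positive combinations $\sum_j c_j\lambda(\gamma_j)$ with $\gamma_j$ loxodromic in $\Ga$. Since $\i$ is linear and $\i(v)=v$, we may average: $v=\tfrac12(v+\i v)$ is the limit of $\sum_j \tfrac{c_j}{2}\bigl(\lambda(\gamma_j)+\i\lambda(\gamma_j)\bigr)$. Each summand lies in $\L_N$, which is a closed convex cone because $N$ is Zariski dense (Benoist). Hence $v\in\L_N$, and since $\i v=v$ we get $v\in\L_N^{\i}$.

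I expect the main obstacle to be the uniform additivity $\lambda(h_k)=k(\lambda(\gamma)+\i\lambda(\gamma))+O(1)$. This requires a transversality statement in the full flag variety for the four flags of $\gamma^{\pm1}$ and their $\eta$-translates, rather than merely a projective one. I would handle it through Benoist's lemma on products of $(r,\varepsilon)$-loxodromic elements, or alternatively by applying the $\SL_n$-type estimate of Proposition \ref{estimate} in each fundamental representation of $G$, where Tits' representations convert Jordan projections to top eigenvalues. If that fails, a fallback is to use the weaker Cartan version $\mu(h_k)\approx\mu(\gamma^k)+\mu(\gamma^{-k})$, which follows from \eqref{lem.cptcartan} and genericity, together with the definition \eqref{lc} of $\L_N$ via Cartan projections. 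That version requires no loxodromy bookkeeping.
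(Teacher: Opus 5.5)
Your proof is correct and uses the same elements as the paper: commutators $\gamma^k\eta\gamma^{-k}\eta^{-1}\in N$ with $\gamma\in\Ga$ loxodromic and $\eta\in N$ generic. The difference is the form of the key estimate.

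The paper uses Cartan projections, $\mu(w_m)=\mu(\gamma^m)+\mu(\gamma^{-m})+O(1)$, which is exactly your fallback. It then reads off $\lambda(\gamma)+\i\lambda(\gamma)\in\L_N$ directly from the definition \eqref{lc}. In that route the fixed right factor $\eta^{-1}$ is absorbed by \eqref{lem.cptcartan}. One only needs that conjugating a fixed generic element by $\gamma^m$ adds the Cartan projections of $\gamma^m$ and $\gamma^{-m}$ up to $O(1)$, so no loxodromy of the product is required.

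Your primary route instead treats $h_k$ as a product of the two $(r,\varepsilon_k)$-loxodromic elements $\gamma^k$ and $\eta\gamma^{-k}\eta^{-1}$, with $\varepsilon_k\to0$. You then apply Benoist's product lemma in each Tits representation. This is heavier machinery, but it yields Jordan projections directly, and $\lambda(h_k)\in\L_N$ is immediate from $\lambda(h)=\lim_m\frac1m\mu(h^m)$.

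One slip to fix: the product lemma needs the attracting flag of each factor to be transverse to the repelling flag of the other. Concretely, that means $\eta\xi^{+}_\gamma$ transverse to $\xi^{+}_\gamma$ and $\eta\xi^{-}_\gamma$ transverse to $\xi^{-}_\gamma$, not $\eta\xi^{\pm}_\gamma$ transverse to $\xi^{\mp}_\gamma$ as you wrote. Each of these conditions is a nonempty Zariski-open condition on $\eta$, so a generic $\eta\in N$ satisfies all of them and the argument goes through unchanged.

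Your final paragraph spells out what the paper compresses into a single appeal to Theorem \ref{thm:Benoist-Jordan-limit-cone}, and it is correct as written. You can simplify it slightly. That theorem already gives $v=\lim t_i\lambda(\gamma_i)$ with single loxodromic $\gamma_i$. Averaging each term with its $\i$-image then shows $v\in\L_N$ using only that $\L_N$ is a closed cone, without needing its convexity.
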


\begin{proof}
Let $\gamma\in\Gamma$ be loxodromic. Since $N$ is Zariski dense, we may
choose a loxodromic element $h\in N$ in general position with respect to
$\gamma$. In particular, the attracting and repelling flags of $h$ are
transverse to those of $\gamma$.
For $m\geq 1$, set
$$
    w_m=[\gamma^m,h]=\gamma^m h\gamma^{-m}h^{-1}.
$$
Then $w_m\in N$. By the standard Schottky product
estimate for loxodromic elements in general position \cite{Benoist97}, we have
$$
    \mu(w_m)
    =
    \mu(\gamma^m)+\mu(\gamma^{-m})+O(1),
$$
where the error is independent of $m$. Dividing by $m$, and using
$$
    \frac1m\mu(\gamma^m)\to \lambda(\gamma),
    \qquad
    \frac1m\mu(\gamma^{-m})\to \lambda(\gamma^{-1})
    =
    \i(\lambda(\gamma)),
$$
we obtain
$$
    \frac1m\mu(w_m)
    \to
    \lambda(\gamma)+\i(\lambda(\gamma)).
$$
Since $w_m\in N$, the limit belongs to $\mathcal L_N$.
By \Cref{thm:Benoist-Jordan-limit-cone}, the claim follows.
\end{proof}

\subsection*{Dual critical exponents} 
We say that a subgroup of $G$ is \emph{non-elementary} if it contains a
non-abelian free subgroup. By the Tits alternative, this is equivalent to
not being virtually solvable. In the word-hyperbolic setting, such a subgroup
has limit set in the Gromov boundary with at least three points; in
particular, it cannot fix a single boundary point.

Let $\Ga<G$ be a non-elementary discrete subgroup.
We denote by $\fa^*=\op{Hom}(\fa, \br)$, the space of linear forms on $\fa$. For $\varphi\in\mathfrak a^*$, define the critical exponent of $\Ga$ associated to the linear form $\varphi$ by
\be\label{dp}
\delta_{\Gamma,\varphi}
\coloneqq
\limsup_{T\to \infty}
\frac{1}{T}
\log
\#\{\gamma\in \Gamma : \varphi(\mu(\gamma))<T\}
\in [0,\infty].
\ee

Define the dual critical exponent function  $\delta_\Gamma^*:\mathfrak a^*\to [0,\infty]$
by $$\delta_\Gamma^*(\varphi)=\delta_{\Gamma,\varphi}.$$

\begin{corollary}\label{cm}
In the examples of Theorem \ref{m}, for every $\rho\in\Omega$,
$$
    \delta_{\rho(F_2)}^*\ne \delta_{\rho(\cal N)}^* .
$$
\end{corollary}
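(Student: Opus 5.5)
Since $\L_{\rho(\mathcal N)}\subsetneq\L_{\rho(F_2)}$ by Theorem \ref{m}, we will find a linear form $\varphi\in\fa^*$ at which the two dual critical exponents differ. The mechanism is that a linear form can be negative somewhere on the larger cone while staying positive on the smaller one. Both limit cones are closed convex cones in $\fa^+$; they have nonempty interior, since $\rho(F_2)$ and $\rho(\mathcal N)$ are Zariski dense (the latter being an infinite normal subgroup of a Zariski-dense subgroup of the simple group $\SL_n(\br)$, as shown in the proof of Theorem \ref{m}). They contain no line, because they lie in $\fa^+$. Pick $v\in\L_{\rho(F_2)}-\L_{\rho(\mathcal N)}$. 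By Hahn--Banach separation of the point $v$ from the closed convex cone $\L_{\rho(\mathcal N)}$, there is $\varphi\in\fa^*$ with $\varphi>0$ on $\L_{\rho(\mathcal N)}-\{0\}$ and $\varphi(v)<0$.

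The first step is to show $\delta_{\rho(\mathcal N),\varphi}<\infty$. The set $\L_{\rho(\mathcal N)}\cap\{\|x\|=1\}$ is compact, so positivity there gives $\varphi\ge c$ on it for some $c>0$. By the definition \eqref{lc} of the limit cone, all but finitely many $\mu(h)$, $h\in\rho(\mathcal N)$, lie in a small conical neighborhood of $\L_{\rho(\mathcal N)}$ on which $\varphi(x)\ge \tfrac c2\|x\|$. Hence $\varphi(\mu(h))\ge \tfrac c2\|\mu(h)\|-C$ for all $h\in\rho(\mathcal N)$. Consequently $\#\{h:\varphi(\mu(h))<T\}\le \#\{h:\|\mu(h)\|<2(T+C)/c\}$. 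The right-hand side grows at most exponentially: $\rho(F_2)$ is Borel--Anosov, so the orbit map is a quasi-isometric embedding and $\|\mu(\gamma)\|\ge C^{-1}|\gamma|-C$ by \eqref{borel}. Word balls in $F_2$ grow exponentially, so $\delta_{\rho(\mathcal N),\varphi}\le \frac{2}{c}\delta_{\rho(F_2)}<\infty$.

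The second step is to show $\delta_{\rho(F_2),\varphi}=\infty$. Since $v$ lies in $\L_{\rho(F_2)}$ and $\varphi(v)<0$, Theorem \ref{thm:Benoist-Jordan-limit-cone} provides an element $\gamma\in\rho(F_2)$ whose Jordan projection satisfies $\varphi(\lambda(\gamma))<0$; otherwise all Jordan projections would lie in the closed half-space $\{\varphi\ge 0\}$, and so would the limit cone. Then $\varphi(\mu(\gamma^k))=k\varphi(\lambda(\gamma))+o(k)\to-\infty$. So $\{\gamma^k\}_{k\ge1}$ is an infinite set with $\varphi(\mu(\gamma^k))<T$ for every $T>0$, and the counting function in \eqref{dp} is infinite for every $T$, giving $\delta_{\rho(F_2),\varphi}=\infty$.

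Comparing the two steps gives $\delta^*_{\rho(F_2)}(\varphi)=\infty\neq\delta^*_{\rho(\mathcal N)}(\varphi)$. The only mildly delicate point is the uniform comparison $\varphi(\mu(h))\gtrsim\|\mu(h)\|$ on all of $\rho(\mathcal N)$. It follows from the definition of the limit cone by a compactness argument, since only finitely many $\mu(h)$ fall outside any fixed conical neighborhood of $\L_{\rho(\mathcal N)}$.
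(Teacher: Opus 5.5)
Your proof is correct and is essentially the paper's argument: choose $v\in\L_{\rho(F_2)}-\L_{\rho(\mathcal N)}$, separate it from the closed convex cone $\L_{\rho(\mathcal N)}$ by a form $\varphi$ that is positive on $\L_{\rho(\mathcal N)}-\{0\}$ with $\varphi(v)<0$, and conclude $\delta_{\rho(\mathcal N),\varphi}<\infty=\delta_{\rho(F_2),\varphi}$. You fill in details the paper leaves implicit: pointedness of the cone for strict positivity, the conical-neighborhood comparison for finiteness, and divergence via Jordan projections (this last step can also be read off directly from $t_i\mu(\gamma_i)\to v$ with $\varphi(v)<0$).
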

\begin{proof} By Theorem \ref{m}, we can
choose $v\in \mathcal L_{\rho(F_2)}- \mathcal L_{\rho(\cal N)}$.
Since $\rho(\cal N)$ is Zariski dense,
$\mathcal L_{\rho(\cal N)}$ is a closed convex cone with nonempty interior.
By the separating hyperplane theorem, there exists a linear form
$\psi \in \fa^*$ which is positive on $\L_{\rho(\cal N)}-\{0\}$ but satisfies $\psi(v)<0$. Then
$$
    \delta_{\rho(\cal N),\psi}<\infty
    \qquad\text{whereas}\qquad
    \delta_{\rho(F_2),\psi}=\infty. 
$$
\end{proof}

On the other hand, we will show that the dual critical exponents agree for
symmetric linear forms which are positive on $\L_\Ga-\{0\}$ (see Theorem \ref{coa}).

The growth indicator and the dual critical exponent function are related by
convex duality: for Zariski dense subgroups with common limit cone $\mathcal L$, equality
of the growth indicators on  the interior $\operatorname{int}\mathcal L$ is equivalent
to equality of the dual critical exponent functions on
$\operatorname{int}\mathcal L^{\vee}$. Here $\operatorname{int}\mathcal L^{\vee}$
is the interior of the dual cone, namely the set of linear forms on
$\mathfrak a$ which are positive on $\mathcal L-\{0\}$.
We will use the following variational formula.
\begin{lemma}\label{KMO_tent} \cite[Corollary 2.7]{KMO_tent}\label{tent}
If $\varphi\in \inte\L_\Ga^\vee$, then
 \be\label{var} 0<\delta_{\Ga, \varphi} =\sup_{v\in \L_\Ga-\{0\}}\frac{\psi_\Ga(v)}{\varphi(v)} <\infty\ee 
\end{lemma}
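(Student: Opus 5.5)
The statement is Lemma \ref{tent}: for $\varphi\in\inte\L_\Ga^\vee$ we have $0<\delta_{\Ga,\varphi}<\infty$ and $\delta_{\Ga,\varphi}=\sup_{v\in\L_\Ga-\{0\}}\psi_\Ga(v)/\varphi(v)$. The plan is to prove the two inequalities separately, after first making $\varphi\circ\mu$ comparable to the norm. Two standard facts about $\psi_\Ga$ will be used throughout: it is upper semicontinuous and homogeneous of degree one, and it is bounded by a constant multiple of the norm (Quint). Its support is $\L_\Ga$, as recalled in the introduction.

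\emph{Step 0: comparability and the counting bound.}
\begin{itemize}
\item Since $\varphi>0$ on the compact set $\{v\in\L_\Ga:\norm v=1\}$, there is $c>0$ with $\varphi\ge c\norm{\cdot}$ on $\L_\Ga$.
\item By definition of the limit cone, $\mu(\ga)$ eventually lies in any open cone neighbourhood of $\L_\Ga$. Choosing such a neighbourhood $\mathcal C_0$ on which $\varphi\ge (c/2)\norm{\cdot}$, all but finitely many $\ga$ satisfy $\varphi(\mu(\ga))\ge (c/2)\norm{\mu(\ga)}$.
\item Hence $\#\{\varphi(\mu(\ga))<T\}\le \#\{\norm{\mu(\ga)}<2T/c\}+O(1)$, which gives $\delta_{\Ga,\varphi}\le 2\delta_\Ga/c<\infty$. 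Finiteness of $\delta_\Ga$ holds for discrete subgroups by volume growth in $X$.
\end{itemize}

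\emph{Upper bound $\delta_{\Ga,\varphi}\le s:=\sup\psi_\Ga/\varphi$.}
\begin{itemize}
\item Fix $\eta>0$. For each unit $v\in\L_\Ga$, pick an open cone $\mathcal C_v\ni v$ in which the count with $\norm{\mu}<T$ grows at rate at most $\psi_\Ga(v)+\eta$. Shrink $\mathcal C_v$ so that $\varphi$ varies by a factor of at most $1+\eta$ on its unit vectors.
\item Compactness gives finitely many such cones covering $\L_\Ga\cap\{\norm{\cdot}=1\}$. Together with $\mathcal C_0$ they cover all but finitely many $\mu(\ga)$.
\item Inside $\mathcal C_v$, the condition $\varphi(\mu(\ga))<T$ forces $\norm{\mu(\ga)}<(1+\eta)T/\varphi(v)$. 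So that cone contributes growth rate at most $(1+\eta)(\psi_\Ga(v)+\eta)/\varphi(v)\le(1+\eta)(s+\eta/c)$.
\item The residual region, inside $\mathcal C_0$ but outside the finite union, contains only finitely many $\mu(\ga)$ provided the finite union is chosen to contain a cone neighbourhood of $\L_\Ga$, which it does since it is an open cover of the compact unit section.
\item Summing finitely many exponentials and letting $\eta\to0$ gives the bound.
\end{itemize}

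\emph{Lower bound $\delta_{\Ga,\varphi}\ge s$.} Fix a unit $v$ with $\psi_\Ga(v)>0$ and let $\mathcal C\ni v$ be any small open cone.
\begin{itemize}
\item By definition of $\psi_\Ga$, the count of $\ga$ with $\mu(\ga)\in\mathcal C$ and $\norm{\mu(\ga)}<T$ grows at rate at least $\psi_\Ga(v)$ along a sequence $T_i\to\infty$.
\item On $\mathcal C$ we have $\varphi\le(1+\eta)\varphi(v)\norm{\cdot}$. So these elements satisfy $\varphi(\mu(\ga))<(1+\eta)\varphi(v)T_i$, giving $\delta_{\Ga,\varphi}\ge\psi_\Ga(v)/((1+\eta)\varphi(v))$.
\end{itemize}

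\emph{Strict positivity.}
\begin{itemize}
\item Since $\Ga$ is non-elementary, it contains a Schottky free subgroup with exponential orbit growth. Hence $\delta_{\Ga,\varphi}\ge c'\delta_{\Ga}'>0$, using the reverse bound $\varphi\le C\norm{\cdot}$, valid on all of $\fa$.
\item Equivalently, $\psi_\Ga$ is positive somewhere by Quint's theorem (concavity with $\psi_\Ga>0$ on $\inte\L_\Ga$ for Zariski dense $\Ga$), which makes the supremum positive.
\end{itemize}

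\emph{Main obstacle.} The delicate step is the upper bound. The definition of $\psi_\Ga$ uses $\limsup$ separately in each cone, and the bound must be made uniform over a cover. This is resolved by the compactness of the unit section of $\L_\Ga$ together with upper semicontinuity of $\psi_\Ga$, which is what lets the finitely many local rate bounds combine into a global one.
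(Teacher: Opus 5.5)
Your proof is correct. The paper does not prove this lemma; it quotes it from Kim--Minsky--Oh \cite{KMO_tent}. What you have written is therefore a self-contained replacement for the citation, not a reconstruction of an argument in the text.

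\textbf{Comparison with the citation.} You work directly from the cone definition of $\psi_\Gamma$:
\begin{itemize}
\item The inequality $\sup_{v}\psi_\Gamma(v)/\varphi(v)\le\delta_{\Gamma,\varphi}$ comes from localizing the count to a single small cone. This is equivalent to the tent inequality $\psi_\Gamma\le\delta_{\Gamma,\varphi}\varphi$ on $\mathcal L_\Gamma$, which is the form the paper relies on later when it asserts $\delta_{\Gamma,\varphi}=1$ for a tangent form.
\item The reverse inequality comes from a finite cover of the compact unit section of $\mathcal L_\Gamma$ by cones on which both the directional growth rate and $\varphi$ are nearly constant.
\end{itemize}
Your route gains transparency and generality. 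It uses only discreteness of $\Gamma$, which gives $\delta_\Gamma<\infty$ and forces all but finitely many $\mu(\gamma)$ into any open cone containing $\mathcal L_\Gamma-\{0\}$. For positivity it uses only a nonabelian free subgroup together with $\|\mu(gh)\|\le\|\mu(g)\|+\|\mu(h)\|$. So neither Zariski density nor the Anosov hypothesis is needed. It also shows exactly where positivity of $\varphi$ on $\mathcal L_\Gamma-\{0\}$ enters: it makes the cover finite and gives $\varphi\asymp\|\cdot\|$ on it. The citation gains only brevity.

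\textbf{Minor points.}
\begin{itemize}
\item The auxiliary cone $\mathcal C_0$ is redundant. The finite union of the $\mathcal C_{v_i}$ is itself an open cone containing $\mathcal L_\Gamma-\{0\}$, so it already captures all but finitely many $\mu(\gamma)$.
\item Upper semicontinuity of $\psi_\Gamma$ is never actually used. The infimum-over-cones definition already supplies a single cone on which the rate bound holds uniformly.
\item In the lower bound, allow a slack $\psi_\Gamma(v)-\eta$ along the sequence $T_i$. The $\limsup$ defining $\tau_{\mathcal C}$ need not be attained.
\end{itemize}
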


\subsection*{Coamenability and symmetrization}
For $\varphi\in\mathfrak a^*$, set
$$
    \bar\varphi
    :=
    \frac12(\varphi+\varphi\circ\op{i}).
$$
The main result of this section is the symmetrized inequality stated
in the introduction:
\SymmetrizedCriticalExponent*

We note that unless $\varphi$ is symmetric, the equality $\delta_{N,\varphi}=\delta_{\Gamma,\varphi}$ does not hold in general; see \Cref{ns0}.
\medskip 

We now prove Theorem \ref{coa}. Let $\Ga<G$ and $N\lhd\Gamma$ be as in
its statement, and fix
$\varphi\in\mathfrak a^*$ which is positive on $\mathcal L_\Gamma-\{0\}$.

We may assume without loss of generality that $\Ga$ is torsion-free.
Fix a word metric $d_w$ on $\Gamma$ and write $$|\ga|=d_w(e, \ga).$$
 Since $\Gamma$ is Borel-Anosov, it is word-hyperbolic. We denote by
$\partial\Gamma$ its Gromov boundary and by
$$
    \overline\Gamma:=\Gamma\cup\partial\Gamma
$$
its Gromov compactification.

\subsection*{The pseudo-distance $d_\varphi$}
For $\ga_1, \ga_2\in \Ga$, set
\be\label{tri}
d_\varphi(\gamma_1,\gamma_2)\coloneqq\varphi(
\mu(\gamma_1^{-1}\gamma_2)).
\ee 

We shall use the following coarse comparison with the word metric.
\begin{lemma}\label{qi}
There exist $A_\varphi\ge1$ and
$B_\varphi'\ge0$ such that, for all $\gamma_1,\gamma_2\in\Gamma$,
$$
    A_\varphi^{-1}d_w(\gamma_1,\gamma_2)-B_\varphi
    \le
    d_\varphi(\gamma_1,\gamma_2)
    \le
    A_\varphi d_w(\gamma_1,\gamma_2)+B_\varphi .
$$
\end{lemma}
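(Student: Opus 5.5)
Since $d_\varphi(\gamma_1,\gamma_2)=\varphi(\mu(\gamma_1^{-1}\gamma_2))$ depends only on $\gamma_1^{-1}\gamma_2$, and $d_w$ is left-invariant, it suffices to prove that for all $\gamma\in\Gamma$,
\[
    A_\varphi^{-1}|\gamma|-B_\varphi\le \varphi(\mu(\gamma))\le A_\varphi|\gamma|+B_\varphi .
\]
I will prove the two inequalities separately: the upper bound uses only subadditivity of $\mu$, and the lower bound uses the Borel--Anosov condition together with positivity of $\varphi$ on $\L_\Gamma-\{0\}$.

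For the upper bound, let $S$ be the finite symmetric generating set defining $d_w$. Write $\gamma=s_1\cdots s_k$ with $k=|\gamma|$ and $s_i\in S$. The Cartan projection satisfies $\|\mu(gh)\|\le\|\mu(g)\|+\|\mu(h)\|$, because $\|\mu(g)\|=d(o,go)$ and the triangle inequality in $X$ applies. Hence
\[
    \varphi(\mu(\gamma))\le\|\varphi\|\,\|\mu(\gamma)\|\le \|\varphi\|\max_{s\in S}\|\mu(s)\|\cdot|\gamma| .
\]
This is the upper bound with $B_\varphi=0$.

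For the lower bound, I first compare $\varphi$ with the norm on $\L_\Gamma$. The cone $\L_\Gamma$ is closed and $\varphi>0$ on the compact set $\L_\Gamma\cap\{\|v\|=1\}$, so there is $c>0$ with $\varphi(v)\ge 2c\|v\|$ on $\L_\Gamma$. By continuity, $\varphi(v)\ge c\|v\|$ on some open cone $\mathcal C\supset\L_\Gamma-\{0\}$. By the definition \eqref{lc}, only finitely many $\gamma\in\Gamma$ have $\mu(\gamma)\notin\mathcal C$: otherwise, using discreteness (so $\|\mu(\gamma_i)\|\to\infty$), a sequence $\mu(\gamma_i)/\|\mu(\gamma_i)\|$ outside $\mathcal C$ would accumulate on a unit vector of $\L_\Gamma$ outside $\mathcal C$, which is impossible.

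Therefore, outside a finite set, $\varphi(\mu(\gamma))\ge c\|\mu(\gamma)\|$. The Borel--Anosov condition \eqref{borel} (in general $G$, $\alpha(\mu(\gamma))\ge C^{-1}|\gamma|-C$ for a simple root $\alpha$) gives
\[
    \|\mu(\gamma)\|\ge\|\alpha\|^{-1}\alpha(\mu(\gamma))\ge C'^{-1}|\gamma|-C' .
\]
Combining the two estimates and enlarging the additive constant $B_\varphi$ to absorb the finitely many exceptional $\gamma$ yields the lower bound.

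The only delicate step is the claim that $\mu(\Gamma)$ lies in $\mathcal C$ up to finitely many elements. This is where both the closedness of $\L_\Gamma$ and the discreteness of $\Gamma$ are needed. Everything else is routine.
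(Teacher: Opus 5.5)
Your proof is correct. Its overall structure matches the paper's. The upper bound comes from $\varphi(v)\le\|\varphi\|\,\|v\|$ together with a linear upper bound on $\|\mu(\gamma)\|$. The lower bound combines $\varphi\ge c\|\cdot\|$ near $\L_\Gamma$ with the Anosov lower bound $\|\mu(\gamma)\|\ge C'^{-1}|\gamma|-C'$.

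The genuine difference is how you pass from $\L_\Gamma$ to the set $\mu(\Gamma)$ itself. The paper cites Benoist's theorem that $\mu(\Gamma)$ stays within bounded distance of $\L_\Gamma$. This yields $\varphi(\mu(\gamma))\ge c_\varphi\|\mu(\gamma)\|-B'_\varphi$ for every $\gamma$ with the sharp constant $c_\varphi$, but it rests on a nontrivial result that uses Zariski density.

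You instead thicken $\L_\Gamma-\{0\}$ to the open cone $\{v:\varphi(v)>c\|v\|\}$. You then use only the definition of $\L_\Gamma$, properness of $\mu$, discreteness, and compactness of the unit sphere to show that all but finitely many $\mu(\gamma)$ lie in that cone. You lose a factor $2$ in the constant and pick up finitely many exceptions; both are absorbed into $A_\varphi$ and $B_\varphi$.

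Your route is therefore more elementary. It works for any discrete $\Gamma$ and any $\varphi$ positive on $\L_\Gamma-\{0\}$, provided one has the linear lower bound on $\|\mu\|$. The paper's route gives the stronger, uniform bounded-distance statement, which is not needed for this lemma.

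You also obtain the upper bound directly from the triangle inequality in $X$, with additive constant $0$, rather than quoting the quasi-isometric embedding of the orbit map.
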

\begin{proof}
Since $\Gamma$ is Borel-Anosov, the orbit map of $\Gamma$ into
the symmetric space is a quasi-isometric embedding. Equivalently, there
exist constants $A\ge1$ and $B\ge0$ such that
$$
    A^{-1}|\gamma|-B
    \le
    \|\mu(\gamma)\|
    \le
    A|\gamma|+B
    \qquad(\gamma\in\Gamma).
$$
On the other hand, since $\varphi$ is positive on
$\mathcal L_\Gamma-\{0\}$, there exists $c_\varphi>0$ such that
$$
    \varphi(v)\ge c_\varphi\|v\|
    \qquad(v\in\mathcal L_\Gamma).
$$
Since $\mu(\Gamma)$ stays within a
bounded distance of $\mathcal L_\Gamma$ \cite{Benoist97}, there exists
$B'_\varphi\ge0$ such that
$$
    \varphi(\mu(\gamma))
    \ge
    c_\varphi\|\mu(\gamma)\|-B'_\varphi
    \qquad(\gamma\in\Gamma).
$$
The reverse inequality $\varphi(\mu(\gamma))\le \|\varphi\|\,\|\mu(\gamma)\|$
is immediate. This proves the lemma.
\end{proof}

\begin{proposition} \cite[Corollary 5.15]{LeeOh23} \label{Grprod}
 Given $C\ge 0$, there exists 
$L>0$ such that, for any $\gamma_1,\gamma_2\in\Gamma$ satisfying
$    \big||\gamma_1\gamma_2|-|\gamma_1|-|\gamma_2|\big|\leq C$,
we have
$$
    \big\|
    \mu(\gamma_1\gamma_2)-\mu(\gamma_1)-\mu(\gamma_2)
    \big\|
    \leq L.
$$
\end{proposition}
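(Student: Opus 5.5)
The proposition is the coarse additivity of the Cartan projection along near-geodesic concatenations, quoted from \cite[Corollary 5.15]{LeeOh23}. I would prove it by reducing to norm estimates in linear representations and then proving uniform transversality through a compactness argument on the Gromov boundary.

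\emph{Step 1: reduction to norms of matrices.} For each simple root $\alpha$, use Tits' construction to get an irreducible representation $(\rho_\alpha,V_\alpha)$ of $G$ with highest weight $\chi_\alpha$, a positive multiple of the fundamental weight $\omega_\alpha$. Choose a $K$-invariant norm on $V_\alpha$ so that
\[
\log\|\rho_\alpha(g)\|=\chi_\alpha(\mu(g)).
\]
The forms $\chi_\alpha$ form a basis of $\fa^*$. It therefore suffices to bound $\chi_\alpha(\mu(\gamma_1\gamma_2)-\mu(\gamma_1)-\mu(\gamma_2))$ above and below for each $\alpha$. The upper bound $\le 0$ is just submultiplicativity of the operator norm. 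The whole content is a lower bound of the form
\[
\|\rho_\alpha(\gamma_1\gamma_2)\|\ge c\,\|\rho_\alpha(\gamma_1)\|\,\|\rho_\alpha(\gamma_2)\|
\qquad\text{with } c=c(C)>0.
\]

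\emph{Step 2: transversality gives the lower bound.} For $g$ with $\chi_\alpha$-gap, let $U(g)\in\mathbb P(V_\alpha)$ be the top singular direction of $\rho_\alpha(g)$. Let $S(g)$ be the hyperplane orthogonal to the top singular direction of $\rho_\alpha(g)^{*}$, which corresponds to $U(g^{-1})$ under duality. Writing singular value decompositions gives the elementary estimate
\[
\|\rho_\alpha(\gamma_1\gamma_2)\|\ge \sin\angle\bigl(U(\gamma_2),S(\gamma_1)\bigr)\,\|\rho_\alpha(\gamma_1)\|\,\|\rho_\alpha(\gamma_2)\|.
\]
So I need a uniform positive lower bound on this angle whenever $||\gamma_1\gamma_2|-|\gamma_1|-|\gamma_2||\le C$. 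Since $\Gamma$ is Borel-Anosov, the $\alpha$-gap grows linearly in word length, so $U(\gamma)$ is well defined for $|\gamma|$ large. If $|\gamma_1|$ or $|\gamma_2|$ stays below some fixed $R$, the claim follows directly from \eqref{lem.cptcartan} applied to the finite set of elements of length $\le R$. So assume both lengths are large.

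\emph{Step 3: uniform transversality by contradiction (the main obstacle).} Suppose there are sequences $\gamma_1^{(i)},\gamma_2^{(i)}$ with $|\gamma_1^{(i)}|,|\gamma_2^{(i)}|\to\infty$, with the near-additivity condition, but with the angle tending to $0$.

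The condition $|\gamma_1\gamma_2|\approx|\gamma_1|+|\gamma_2|$ says the Gromov product $(\gamma_1^{-1}\cdot\gamma_2)_e$ is bounded by roughly $C/2$. Passing to subsequences, $\gamma_1^{(i)\,-1}\to x$ and $\gamma_2^{(i)}\to y$ in $\partial\Gamma$, and bounded Gromov product forces $x\neq y$.

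Next I would use the standard convergence fact for Anosov subgroups (Kapovich–Leeb–Porti, Bochi–Potrie–Sambarino): Cartan attracting data converge to the boundary map. Concretely, $U(\gamma)\to\xi_\alpha(z)$ as $\gamma\to z$, and dually $S(\gamma_1)\to\xi^*_\alpha(x)$ as $\gamma_1^{-1}\to x$. Here $\xi_\alpha,\xi_\alpha^*$ are the continuous, dynamics-preserving, antipodal boundary maps, pushed into $\mathbb P(V_\alpha)$ and its dual.

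Antipodality of $\xi_\alpha(y)$ and $\xi^*_\alpha(x)$ for $x\ne y$ then contradicts the angle tending to $0$. The delicate point is setting up this convergence uniformly: either cite it directly, or derive it from the Morse lemma, which says word geodesics map to uniformly regular quasi-geodesics staying close to Weyl cones.

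Finally, take $L$ as the maximum of $|\log c|$ over the finitely many $\alpha$, converted through the basis $\{\chi_\alpha\}$, together with the constant from the bounded-length case.
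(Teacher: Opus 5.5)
Your argument is correct. The paper gives no proof of its own. It cites \cite[Corollary 5.15]{LeeOh23}, which is stated only for $C=0$, and asserts that the same proof works for general $C$.

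Your proposal is a self-contained version of the standard argument behind that corollary. Antipodality of the boundary maps, combined with compactness of $\overline\Gamma$, gives uniform transversality of Cartan attractors. This is then converted into two-sided norm estimates in Tits' representations $V_\alpha$. What your route buys over the citation is that the extension from $C=0$ to $C>0$ becomes transparent. The hypothesis enters only through $(\gamma_1^{-1}\cdot\gamma_2)_e\le C/2$, and this bound is exactly what forces the limit points to satisfy $x\neq y$. The bounded-length regime is handled by \eqref{lem.cptcartan} and does not use the hypothesis at all.

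A few inputs should be stated precisely rather than left loose.

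\begin{enumerate}
\item The norm on $V_\alpha$ should be a $K$-invariant Euclidean norm for which $\rho_\alpha(A)$ acts by self-adjoint maps. This is needed both for $\log\|\rho_\alpha(g)\|=\chi_\alpha(\mu(g))$ and for your singular-value estimate.
\item $S(\gamma_1)$ is the kernel of the top singular direction of $\gamma_1^{-1}$ in the contragredient representation $V_\alpha^*$, whose highest weight is $\chi_\alpha\circ\i$. It is not $U(\gamma_1^{-1})$ in $V_\alpha$ itself. So its convergence to $\xi^*_\alpha(x)$ is the Cartan property for $\i(\alpha)$, and you are really using that $\Gamma$ is $\{\alpha,\i(\alpha)\}$-Anosov for every $\alpha$. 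Borel-Anosov provides this.
\item You need the Cartan property for arbitrary sequences $\gamma_n\to z$ in $\overline\Gamma$, not just along geodesic rays. Some references only state the ray version. The general form is available from the asymptotic-embeddedness characterization in \cite{KLP17}, or from Gu\'eritaud--Guichard--Kassel--Wienhard, and it is better to cite it than to re-derive it through the Morse lemma.
\end{enumerate}
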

Strictly speaking,  \cite[Corollary 5.15]{LeeOh23} was stated for $C=0$, but its proof works for a general $C>0$. Proposition
\ref{Grprod} implies that there
exists $D>0$ such that whenever $u$ lies on a word geodesic from $x$ to
$z$,
\be\label{t2}
    \left|d_\varphi(x,z)-d_\varphi(x,u)-d_\varphi(u,z)\right|\le D.\ee

Using the Gromov hyperbolicity, one can also deduce the following coarse triangle inequality for $d_\varphi$ from  Proposition
\ref{Grprod}. See \cite{DKO_AR} where a more general version was obtained:

\begin{proposition} \cite[Theorem 4.1]{DKO_AR} \label{t3}
For all $x,y,z\in\Gamma$,
\be\label{eq:coarse-triangle}
    d_\varphi(x,z)\le d_\varphi(x,y)+d_\varphi(y,z)+D.\ee 
\end{proposition}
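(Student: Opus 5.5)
We derive the coarse triangle inequality \eqref{eq:coarse-triangle} from two ingredients: the coarse additivity \eqref{t2} along word geodesics, and thinness of geodesic triangles in the Cayley graph of $\Gamma$, which is $\delta_0$-hyperbolic. The guiding principle is that $d_\varphi$ behaves, up to bounded error, like a ``weighted length'' along geodesics. A tripod approximating a geodesic triangle then reduces the inequality to the positivity of $d_\varphi$ on long segments.

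Fix $x,y,z\in\Gamma$ and word geodesics $[x,y]$, $[y,z]$, $[x,z]$. By thinness, there is an internal point $u\in[x,z]$ together with points $u'\in[x,y]$ and $u''\in[y,z]$ such that $d_w(u,u')$ and $d_w(u,u'')$ are at most some constant $R=R(\delta_0)$. Concretely, $u$ is at word distance $(y|z)_x$ from $x$, with the analogous description for the other two points.

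Applying \eqref{t2} on $[x,z]$ gives
\[
d_\varphi(x,z)\le d_\varphi(x,u)+d_\varphi(u,z)+D.
\]
We then compare $d_\varphi(x,u)$ with $d_\varphi(x,u')$. Since $u=u'g$ with $|g|\le R$, we have $\mu(x^{-1}u)=\mu(x^{-1}u'g)$. By \eqref{lem.cptcartan}, applied with the finite set $Q=\{g:|g|\le R\}$, this differs from $\mu(x^{-1}u')$ by at most $C(R)$. Hence $|d_\varphi(x,u)-d_\varphi(x,u')|\le\|\varphi\|C(R)$. In the same way, $|d_\varphi(u,z)-d_\varphi(u'',z)|\le\|\varphi\|C(R)$.

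Next we use \eqref{t2} on $[x,y]$ and $[y,z]$ to get
\[
d_\varphi(x,y)\ge d_\varphi(x,u')+d_\varphi(u',y)-D, \qquad d_\varphi(y,z)\ge d_\varphi(y,u'')+d_\varphi(u'',z)-D.
\]
It remains to show that the leftover terms $d_\varphi(u',y)$ and $d_\varphi(y,u'')$ are bounded below by a uniform constant $-B$. This follows from \Cref{qi}, which gives $d_\varphi(a,b)\ge A_\varphi^{-1}d_w(a,b)-B_\varphi\ge -B_\varphi$, with the constant $B_\varphi$ from that lemma.

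Combining these estimates yields
\[
d_\varphi(x,z)\le d_\varphi(x,y)+d_\varphi(y,z)+3D+2\|\varphi\|C(R)+2B_\varphi.
\]
After enlarging $D$, this is the claim.

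The only delicate step is the uniform lower bound on $d_\varphi$, since $\varphi$ need not be positive on all of $\fa^+$. This is exactly where positivity of $\varphi$ on $\L_\Gamma-\{0\}$, together with the fact that $\mu(\Gamma)$ stays within bounded distance of $\L_\Gamma$, enters through \Cref{qi}. The rest of the argument is standard hyperbolic geometry combined with \Cref{Grprod}.
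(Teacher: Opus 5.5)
Your argument is correct and follows the route the paper indicates. The paper only cites \cite{DKO_AR}, remarking that the inequality follows from Gromov hyperbolicity together with Proposition \ref{Grprod}; you make that explicit by approximating the triangle with a tripod, applying \eqref{t2} along the three sides, absorbing the bounded jumps between internal points via \eqref{lem.cptcartan}, and bounding the discarded terms $d_\varphi(u',y)$ and $d_\varphi(y,u'')$ from below by Lemma \ref{qi}. One cosmetic point: Gromov products can be half-integers, so take the internal points to be nearest vertices on the geodesics, which only enlarges $R$ by one.
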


\subsection*{Shadow estimate}
For $R>0$, let $\mathcal O_R(x,y)$ denote the word-shadow
$$
\mathcal O_R(x,y)
=
\{z\in\overline\Gamma:
\text{ some word geodesic }[x,z]\text{ meets }B_w(y,R)\}
$$
where $B_w(y,R)=\{x\in \Ga: d_w(x,y)<R\}$.

\begin{lemma}\label{lem:shadow-estimate}
Assume that $N\lhd\Gamma$ is non-elementary. Fix
$a>\delta_{N,\varphi}$. For a coset $C=gN\subset\Gamma$ and
$x\in\Gamma$, define a finite measure on $\overline\Gamma$, supported on
$C$, by
$$
    \mu_x^C
    \coloneqq
    \sum_{c\in C} e^{-a d_\varphi(x,c)}\Dirac_c,
$$
where $\Dirac_c$ denotes the Dirac measure at $c$.

Then there exist $R>0$ and $A\ge1$ such that, for every coset
$C=gN$ and all $x,y\in\Gamma$,
\be \label{eq:shadow-estimate}
A^{-1}|\mu_y^C|e^{-a d_\varphi(x,y)}
\le
\mu_x^C(\mathcal O_R(x,y))
\le
A|\mu_y^C|e^{-a d_\varphi(x,y)}.
\ee 
\end{lemma}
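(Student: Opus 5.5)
The shadow estimate will follow the standard Sullivan/Roblin argument, using only the coarse additivity \eqref{t2}, the coarse triangle inequality \eqref{eq:coarse-triangle}, and Lemma \ref{qi}. Normality of $N$ makes the measures $\mu_x^C$ behave like $\Gamma$-quasi-conformal densities: for $\gamma\in\Gamma$ we have $\gamma C=\gamma gN$, another coset, and $\gamma_*\mu_x^C=\mu_{\gamma x}^{\gamma C}$, since $d_\varphi$ is left-invariant. Because $a>\delta_{N,\varphi}$ and $d_\varphi(x,c)\ge d_\varphi(e,x^{-1}c)$ up to constants, each total mass $|\mu_y^C|$ is finite. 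Since $x^{-1}C=x^{-1}gN$ is a coset of $N$, its Poincaré series is comparable to that of $N$ after translating by a single element, using \eqref{eq:coarse-triangle}. I would first reduce by left-translation to $x=e$, replacing $C$ by $x^{-1}C$ and $y$ by $x^{-1}y$. All constants must then be uniform in the coset, which is where the proof has to be careful.

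\emph{Upper bound.} Let $c\in C$ with $c\in\mathcal O_R(e,y)$, so some word geodesic $[e,c]$ passes within $R$ of $y$, say through $u$ with $d_w(u,y)<R$. By \eqref{t2},
\[
d_\varphi(e,c)\ge d_\varphi(e,u)+d_\varphi(u,c)-D.
\]
Replacing $u$ by $y$ costs a bounded amount, by \eqref{eq:coarse-triangle} and Lemma \ref{qi} with $d_w(u,y)<R$. This gives $d_\varphi(e,c)\ge d_\varphi(e,y)+d_\varphi(y,c)-D'(R)$. Summing over $c$ yields
\[
\mu_e^C(\mathcal O_R(e,y))\le e^{aD'}e^{-ad_\varphi(e,y)}|\mu_y^C|.
\]
This step is routine.

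\emph{Lower bound.} The reverse inequality $d_\varphi(e,c)\le d_\varphi(e,y)+d_\varphi(y,c)+D$ holds for every $c$, so it suffices to show that a definite proportion of $|\mu_y^C|$ lies in $\mathcal O_R(e,y)$, uniformly in $y$ and $C$. Hyperbolicity gives this for all $c$ such that the Gromov product $(e|c)_y$ is bounded by roughly $R-\delta$. The complement consists of $c$ lying in a "shadow seen from $y$" of a point near $[y,e]$, that is, in $\mathcal O_{R'}(y,y')$ for some $y'$ at bounded distance from $y$ in the direction of $e$.

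The plan is to use a finite set $S\subset\Gamma$ of "rotations" coming from non-elementarity, in the style of Roblin's argument. I would choose finitely many elements $s_1,\dots,s_k\in N$ whose boundary points are pairwise far apart. Then for every direction $\xi\in\partial\Gamma$ there is $i$ such that the shadows from $e$ in direction $\xi$ and the translates under $s_i$ are disjoint at a fixed scale; these translates cover all but one cone. Since $N$ is normal, $s_i$ does not preserve $C$. However $yC'$-type translations do: $ys_iy^{-1}\in N$ preserves $C$ because $N$ is normal. So $\mu_y^C$ is quasi-invariant under $ys_iy^{-1}$, with Radon--Nikodym derivative bounded by $e^{a(\max_i d_\varphi(e,s_i)+2D)}$.

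Hence the "bad" part $\mu_y^C(\mathcal O_{R'}(y,y'))$ is carried by some $ys_iy^{-1}$ into the good part, with bounded distortion. This gives $\mu_y^C(\text{good})\ge (k+1)^{-1}e^{-\text{const}}|\mu_y^C|$ once $R$ is large. Finally, we convert back from $\mu_y^C$ to $\mu_e^C$ on the good set using the reverse inequality.

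The main obstacle is making this rotation argument uniform over all cosets and all $y$. Concretely, the shadow $\mathcal O_{R'}(y,y')$ must have boundary-visual size bounded away from the full boundary independently of $y$, after translating by $y^{-1}$. I would handle this by pulling back via $y^{-1}$, which turns $\mu_y^C$ into $\mu_e^{y^{-1}C}$ and turns $ys_iy^{-1}$ into $s_i$. The problem then reduces to a statement at the base point $e$ for an arbitrary coset, where a compactness argument on $\partial\Gamma$ with the fixed finite set $\{s_i\}\subset N$ applies.
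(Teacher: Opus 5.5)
Your proposal is correct, but it proves the lower bound by a different route from the paper.

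\textbf{The paper's argument.} It argues by contradiction. Suppose the lower bound fails along $R_i\to\infty$, cosets $C_i$ and points $h_i$ (after translating $x_i$ to $e$). Then the normalized pulled-back measures $\nu_i=(h_i^{-1})_*\mu^{C_i}_{h_i}/|\mu^{C_i}_{h_i}|$ lose their mass on every compact set avoiding $\xi=\lim h_i^{-1}$, so they converge weakly to $\delta_\xi$. On the other hand, $h_i^{-1}C_i$ is left $N$-invariant. So each $n\in N$ distorts $\nu_i$ by a factor bounded independently of $i$, and the limit $\delta_\xi$ is $N$-quasi-invariant. This forces $N$ to fix $\xi$, contradicting non-elementarity.

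\textbf{Your argument.} You run the direct Sullivan--Roblin ``rotation'' argument instead. After the same pull-back by $y^{-1}$, you identify the bad set explicitly as $\{c:(y^{-1}|c)_e>R-O(\delta)\}$, which is a small visual neighborhood of $y^{-1}$. You then fix a finite set $\{s_i\}\subset N$ with no common fixed point in $\partial\Gamma$, and use it to move the bad mass into the good region. The distortion constant is again independent of the coset.

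\textbf{Comparison.} Both proofs rest on the same two ingredients: normality gives $N$-quasi-invariance uniformly over all cosets, and $N$ has no global fixed point on $\partial\Gamma$. Your version is effective: it gives $A$ and $R$ explicitly in terms of $d_\varphi(e,s_i^{\pm1})$, $D$ and the hyperbolicity constant. The price is the visual-metric bookkeeping you only sketch. In particular, the uniform displacement $\max_i d(s_iz,z)\ge 3\epsilon_0$ must hold on a neighborhood of $\partial\Gamma$ in $\overline\Gamma$, or on all of $\overline\Gamma$, not just on $\partial\Gamma$, because $y^{-1}$ is a group element. You also need uniform continuity of the $s_i$, so that a set of small visual diameter is moved off itself. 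The paper's weak-limit argument avoids all of this, at the cost of non-explicit constants.

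\textbf{Minor corrections.}
\begin{enumerate}
\item The point $y'$ in your description of the bad set should lie at distance about $R$ from $y$ along $[y,e]$, not merely at bounded distance. That is what makes the bad set shrink as $R\to\infty$.
\item Your remark that $s_i$ does not preserve $C$ is incorrect. By normality, every element of $N$ preserves every coset $gN$ under left multiplication. This is harmless, since you only use that $ys_iy^{-1}$ (respectively $s_i$ after pulling back) preserves the relevant coset.
\item Since $\varphi$ need not be $\i$-invariant, the distortion constant should involve $\max\{d_\varphi(e,s_i),d_\varphi(e,s_i^{-1})\}$, not just $d_\varphi(e,s_i)$.
\end{enumerate}
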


\begin{proof} For fixed $x,g\in\Gamma$, we have
$$
    \sup_{n\in N} \|\mu(x^{-1}ng)-\mu(n)\|<\infty.
$$
Since $a>\delta_{N,\varphi}$,
it follows that the measures $\mu_x^C$ are finite. 

We first prove the upper bound in \eqref{eq:shadow-estimate}. Let $c\in C\cap\mathcal O_R(x,y)$, and
choose $u\in[x,c]$ with $d_w(u,y)\le R$. By \eqref{t2}, applied to the
word geodesic $[x,c]$, we have
$$
    d_\varphi(x,c)\ge d_\varphi(x,u)+d_\varphi(u,c)-D.
$$
Since $d_w(u,y)\le R$, both $d_\varphi(u,y)$ and
$d_\varphi(y,u)$ are bounded by a constant depending only on $R$. Using
Proposition \ref{t3}, we get
$$
    d_\varphi(x,u)\ge d_\varphi(x,y)-O_R(1)
    \quad\text{and}\quad
    d_\varphi(u,c)\ge d_\varphi(y,c)-O_R(1).
$$
Therefore
$$
    d_\varphi(x,c)\ge d_\varphi(x,y)+d_\varphi(y,c)-O_R(1).
$$
It follows that
$$
    e^{-a d_\varphi(x,c)}
    \le
    A e^{-a d_\varphi(x,y)}e^{-a d_\varphi(y,c)}
$$
for some $A=A(R)>0$. Summing over $c\in C\cap\mathcal O_R(x,y)$ gives
the upper bound.

We prove the lower bound by contradiction. Suppose it fails. Then there are
$R_i\to\infty$,  cosets $C_i=g_i N$, and 
$x_i,y_i\in\Gamma$ such that
\be \label{eq:shadow-contra}
    e^{a d_\varphi(x_i,y_i)}
    \frac{\mu_{x_i}^{C_i}(\mathcal O_{R_i}(x_i,y_i))}
    {|\mu_{y_i}^{C_i}|}
    \to 0.\ee 
Left-translating by $x_i^{-1}$, and using normality of $N$, we may
assume that $x_i=e$. Write $h_i=y_i$. Since $d_\varphi$ is quasi-isometric
to the word metric, $h_i\to\infty$; otherwise, for large $i$, the shadow
$\mathcal O_{R_i}(e,h_i)$ would be all of $\overline\Gamma$, contradicting
\eqref{eq:shadow-contra}. Passing to a subsequence, we may assume that the sequence
$h_i^{-1}$ converges to some $\xi\in\partial\Gamma$.

Define probability measures on $\overline \Ga$:
$$ \nu_i \coloneqq \frac{1}{|\mu_{h_i}^{C_i}|}(h_i^{-1})_*\mu_{h_i}^{C_i}.
$$
After passing to a subsequence, $\nu_i\to\nu$ weakly. We claim that
$\nu=\delta_\xi$. Let $V\subset\overline\Gamma$ be a compact subset such that
$ V\cap\{\xi\}=\varnothing$. Since $h_i^{-1}\to\xi$, word
hyperbolicity gives
$$ V\subset \mathcal O_{R_i}(h_i^{-1},e) \quad\text{for all large $i$.} $$
 Equivalently,
$h_iV\subset \mathcal O_{R_i}(e,h_i)$. 
For $z\in h_iV$, Proposition \ref{t3}, applied to the triple
$(e,h_i,z)$, gives
$$
    d_\varphi(h_i,z)\ge d_\varphi(e,z)-d_\varphi(e,h_i)-D.
$$

Therefore
$$
\begin{aligned}
\nu_i(V)
&=
\frac{\mu_{h_i}^{C_i}(h_iV)}{|\mu_{h_i}^{C_i}|}  \le
e^{aD}e^{a d_\varphi(e,h_i)}
\frac{\mu_e^{C_i}(\mathcal O_{R_i}(e,h_i))}
{|\mu_{h_i}^{C_i}|}.
\end{aligned}
$$
By \eqref{eq:shadow-contra}, the right-hand side tends to $0$. Hence
$\nu_i(V)\to0$, and so $\nu(V)=0$. This implies that $\nu=\delta_\xi$.

We claim that for any $n\in N$, there exists
$C_n\ge1$, independent of $i$, such that 
for any non-negative
continuous function $f$ on $\overline \Ga$,
\be\label{cn}
    C_n^{-1}\nu_i(f)\le (n_*\nu_i)(f)\le C_n\nu_i(f).
\ee 
Indeed, the support of
$\nu_i$ is $h_i^{-1}C_i$, which is left $N$-invariant by normality.

Moreover, Proposition \ref{t3} gives a uniform comparison of the weights on
the support. For $z\in\Gamma$, we get
$$
    d_\varphi(e,n^{-1}z)
    \le d_\varphi(e,n^{-1})+d_\varphi(n^{-1},n^{-1}z)+D
    =
    d_\varphi(e,n^{-1})+d_\varphi(e,z)+D,
$$
and
$$
    d_\varphi(e,z)
    \le d_\varphi(e,n)+d_\varphi(e,n^{-1}z)+D.
$$
Hence
$$
    |d_\varphi(e,n^{-1}z)-d_\varphi(e,z)|
    \le
    \max\{d_\varphi(e,n),d_\varphi(e,n^{-1})\}+D.
$$
Thus, for a constant $C_n\ge1$, independent of $i$,
$$
    C_n^{-1}e^{-a d_\varphi(e,z)}
    \le
    e^{-a d_\varphi(e,n^{-1}z)}
    \le
    C_n e^{-a d_\varphi(e,z)}.
$$
Since $h_i^{-1}C_i$ is left $N$-invariant, changing variables
$w=nz$ in the sum defining $\nu_i$ gives, for any non-negative
continuous function $f$ on $\overline \Ga$,
$$
    C_n^{-1}\int f\,d\nu_i
    \le
    \int f(nz)\,d\nu_i(z)
    \le
    C_n\int f\,d\nu_i .
$$
This proves \eqref{cn}.
Passing to the weak limit gives 
$$ C_n^{-1}\nu(f) \le n_*\nu (f) \le C_n \nu (f)\quad \text{ for all $n\in N$.}$$
Since $\nu=\delta_\xi$, it follows that
any $n\in N$ fixes $\xi$. This contradicts
the non-elementarity of $N$. Hence this proves the lower bound.
\end{proof}

\subsection*{Weighted convolution estimate}
We next convert the shadow estimate into a weighted convolution inequality.
This is the estimate to which coamenability will be applied in the next
lemma.
\begin{lemma}\label{lem:convolution}\label{eq:conv-estimate}
With $a>\delta_{N,\varphi}$ as above, define, for any $x\in \Ga$,
$$
    U(x)\coloneqq|\mu_x^N|=\sum_{n\in N}e^{-a d_\varphi(x,n)}<\infty.
$$
Then for any $s>a$, there exists $b_s<\infty$ such that, for all
$x\in\Gamma$,
$$
    \sum_{y\in\Gamma}U(y)e^{-s d_\varphi(x,y)}
    \le b_sU(x).
$$
\end{lemma}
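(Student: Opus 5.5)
The plan is to bound each term $U(y)e^{-s d_\varphi(x,y)}$ by a shadow mass using Lemma \ref{lem:shadow-estimate}, then exchange the order of summation so that only a geometric series along word geodesics remains.

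\emph{Step 1 (shadow bound).} Apply the lower bound of Lemma \ref{lem:shadow-estimate} with the trivial coset $C=N$. It gives $R>0$ and $A\ge1$ such that, for all $x,y\in\Gamma$,
\[
U(y)e^{-a d_\varphi(x,y)}=|\mu_y^N|e^{-a d_\varphi(x,y)}\le A\,\mu_x^N(\mathcal O_R(x,y)).
\]
Multiplying by $e^{-(s-a)d_\varphi(x,y)}$ yields
\[
U(y)e^{-s d_\varphi(x,y)}\le A\,e^{-(s-a)d_\varphi(x,y)}\,\mu_x^N(\mathcal O_R(x,y)).
\]

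\emph{Step 2 (Fubini).} Sum over $y\in\Gamma$ and expand $\mu_x^N$. This gives
\[
\sum_{y}U(y)e^{-s d_\varphi(x,y)}\le A\sum_{n\in N}e^{-a d_\varphi(x,n)}\,S(x,n),
\]
where
\[
S(x,n):=\sum_{y:\,n\in\mathcal O_R(x,y)}e^{-(s-a)d_\varphi(x,y)}.
\]
It therefore suffices to show that $S(x,n)\le b'$ uniformly in $x$ and $n$; then $b_s=Ab'$ works.

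\emph{Step 3 (counting along a geodesic).} Fix a word geodesic $\sigma=[x,n]$. By $\delta$-hyperbolicity, any other word geodesic from $x$ to $n$ stays within $\delta$ of $\sigma$. Hence if $n\in\mathcal O_R(x,y)$, then $y$ lies in the $(R+\delta)$-neighbourhood of $\sigma$.

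Say $y$ is within $R+\delta$ of $\sigma(j)$. Then $d_w(x,y)\ge j-R-\delta$. For each $j$, the number of such $y$ is at most $\#B_w(e,R+\delta)$. By Lemma \ref{qi},
\[
d_\varphi(x,y)\ge A_\varphi^{-1}(j-R-\delta)-B_\varphi .
\]
Consequently
\[
S(x,n)\le \#B_w(e,R+\delta)\sum_{j\ge0}e^{-(s-a)\left(A_\varphi^{-1}(j-R-\delta)-B_\varphi\right)}.
\]
Since $s>a$, this series converges to a constant independent of $x$ and $n$. This gives the required uniform bound $b'$.

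\emph{Main obstacle.} The key step is the uniformity in Step 1. It rests on the lower bound in the shadow lemma, which in turn uses the non-elementarity of $N$. Beyond that, one must check that the "some geodesic" definition of shadows does not spoil the counting in Step 3; fellow-travelling of geodesics in the hyperbolic group $\Gamma$ takes care of this. The convergence of the inner series needs only $s>a$, and never $s>\delta_{\Gamma,\varphi}$.
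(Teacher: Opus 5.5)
Your proof is correct and uses the same three ingredients as the paper: the lower shadow bound of Lemma~\ref{lem:shadow-estimate} for the coset $C=N$, a bounded-overlap property of the shadows $\mathcal O_R(x,y)$, and geometric decay coming from $s>a$. The difference is only in the bookkeeping. The paper sorts $y$ into $d_\varphi$-annuli $S_m(x)$ and bounds the multiplicity of the shadows in each annulus using \eqref{t2}, Proposition~\ref{t3} and Lemma~\ref{qi}. You instead interchange the sums and count, for each $n\in N$, the $y$ whose shadow contains $n$ along a fixed geodesic $[x,n]$, using thin bigons in the word metric; this needs only Lemma~\ref{qi}, up to a harmless constant adjustment when passing between Cayley-graph points and vertices.
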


\begin{proof}
Let $R,A$ be the constants from Lemma \ref{lem:shadow-estimate}. By \Cref{qi}, we 
may choose a negative integer $m_0$ such that $d_\varphi(x,y)\ge m_0$ for all
$x,y\in\Gamma$, and set
$$
    S_m(x)\coloneqq\{y\in\Gamma:m\le d_\varphi(x,y)<m+1\},
    \qquad m\ge m_0.
$$

We first note that the family
$$
    \{\mathcal O_R(x,y):y\in S_m(x)\}
$$
has uniformly bounded multiplicity, independently of $x$ and $m$.
Indeed, suppose
$$
    z\in\mathcal O_R(x,y)\cap\mathcal O_R(x,y')
    \qquad\text{with }y,y'\in S_m(x).
$$
Choose points $u,u'$ on a word geodesic $[x,z]$ such that
$d_w(u,y)\le R$ and $d_w(u',y')\le R$. Since $d_\varphi$ is
quasi-isometric to $d_w$, all $d_\varphi$-distances between points at
$d_w$-distance at most $R$ are uniformly bounded. Hence, by Proposition
\ref{t3},
$$
    d_\varphi(x,u)=d_\varphi(x,y)+O_R(1)
    \quad\text{ and}\quad
    d_\varphi(x,u')=d_\varphi(x,y')+O_R(1).
$$
Since $y,y'\in S_m(x)$, it follows that
$$
    d_\varphi(x,u)=d_\varphi(x,u')+O_R(1).
$$
After interchanging $u$ and $u'$ if necessary, assume that $u$ lies
between $x$ and $u'$ on the geodesic $[x,z]$. Applying \eqref{t2} to
the geodesic segment $[x,u']$, we obtain
$$
    d_\varphi(u,u')
    \le d_\varphi(x,u')-d_\varphi(x,u)+D
    =
    O_R(1).
$$
By \Cref{qi}, this implies
$d_w(u,u')=O_R(1)$. Therefore $d_w(y,y')=O_R(1)$. Properness of the
word metric gives a uniform multiplicity bound; call it $M$.

Using the lower bound in \eqref{eq:shadow-estimate} for the coset $C=N$,
we get
$$
    U(y)e^{-a d_\varphi(x,y)}
    \le
    A\,\mu_x^N(\mathcal O_R(x,y)).
$$
Therefore, for any $m\ge m_0$,
$$
\sum_{y\in S_m(x)}
U(y)e^{-a d_\varphi(x,y)}
\le
A\sum_{y\in S_m(x)}\mu_x^N(\mathcal O_R(x,y))
\le
AMU(x).
$$
Since $d_\varphi(x,y)\ge m$ for $y\in S_m(x)$, we obtain
$$
\sum_{y\in S_m(x)}
U(y)e^{-s d_\varphi(x,y)}
\le
AMe^{-(s-a)m}U(x).
$$
Summing over $m\ge m_0$ proves the lemma.
\end{proof}

\subsection*{Amenable averaging over $\Ga/N$}
The next lemma is where coamenability enters the proof. 
Since $\Gamma/N$ is amenable, there exists a right $\Gamma$-invariant
mean
\be\label{mean} \mathfrak m:\ell^\infty(\Gamma/N)\to\mathbb R
\ee 
where $\Gamma$ acts on $\Gamma/N$ by right multiplication:
$(\gamma N)\cdot\gamma_0=\gamma\gamma_0N.
$
Here a mean on $\ell^\infty(\Gamma/N)$ means a positive normalized
linear functional. Thus $\mathfrak m(f)\ge0$ whenever $f\ge0$, and
$\mathfrak m(1)=1$. Right $\Gamma$-invariance means that
$$
    \mathfrak m(f\cdot\gamma_0)=\mathfrak m(f)
    \qquad
    (f\in\ell^\infty(\Gamma/N),\ \gamma_0\in\Gamma),
$$
where $(f\cdot\gamma_0)(\gamma N)=f(\gamma\gamma_0N)$.
The existence of such a mean is equivalent to the amenability of
$\Gamma/N$.

The idea, following Roblin \cite{Roblin05}, is to use
an invariant mean on $\Gamma/N$ to average the logarithmic distortion of
the weight $U$ under right translation; this produces a character
$\chi:\Gamma\to\mathbb R$. The two resulting twisted Poincar\'e series,
for $\varphi$ and for $\varphi\circ\op{i}$, can then be combined so that the
character cancels and the symmetrized exponent $\bar\varphi$ appears. 

The following lemma concludes the proof of Theorem \ref{coa}.
\begin{lemma}\label{lem:amenable-average}
 For any $s>\delta_{N,\varphi}$,
\be\label{eq:sym-series}
    \sum_{\gamma\in\Gamma}e^{-s d_{\bar\varphi}(e,\gamma)}<\infty.
\ee 

 In particular,  $\delta_{N,\varphi}\ge \delta_{\Gamma,\bar\varphi}.$
\end{lemma}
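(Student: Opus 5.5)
Fix $a$ with $\delta_{N,\varphi}<a<s$. Then Lemma \ref{lem:convolution} applies with this $a$ and the given $s$. It needs $N$ non-elementary. This holds because $N$ is infinite: otherwise $\Gamma$ would be amenable, which is impossible for a Zariski-dense subgroup. An infinite normal subgroup of a non-elementary hyperbolic group is non-elementary.

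The plan follows Roblin: turn the convolution inequality into a twisted Poincar\'e series via the invariant mean \eqref{mean}, then remove the twist by inversion.

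\emph{Step 1: $U$ descends to $\Gamma/N$ and has bounded logarithmic increments.} For $n_0\in N$,
\[
U(n_0x)=\sum_{n}e^{-a\varphi(\mu(x^{-1}n_0^{-1}n))}=U(x),
\]
after reindexing $n\mapsto n_0n$. Since $N$ is normal, $Nx=xN$, so $U$ is a function on $\Gamma/N$. For $\gamma\in\Gamma$ set $f_\gamma(xN):=\log U(x\gamma)-\log U(x)$; this depends only on $xN$ for the same reason. Proposition \ref{t3} applied to $d_\varphi(x\gamma,n)$ and $d_\varphi(x,n)$ gives
\[
|d_\varphi(x\gamma,n)-d_\varphi(x,n)|\le \max\{d_\varphi(e,\gamma),d_\varphi(e,\gamma^{-1})\}+D.
\]
Hence $f_\gamma\in\ell^\infty(\Gamma/N)$. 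The cocycle identity $f_{\gamma\gamma'}=f_\gamma+f_{\gamma'}\cdot\gamma$ and right invariance of $\mathfrak m$ show that $\chi(\gamma):=\mathfrak m(f_\gamma)$ is a homomorphism $\Gamma\to\mathbb R$.

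\emph{Step 2: twisted series for $\varphi$.} Substitute $y=x\gamma$ in Lemma \ref{lem:convolution} and divide by $U(x)$. This gives, for every $x$,
\[
\sum_{\gamma\in\Gamma}e^{f_\gamma(xN)}e^{-s\varphi(\mu(\gamma))}\le b_s .
\]
Restrict to a finite set $F\subset\Gamma$ and apply $\mathfrak m$. For a mean and bounded $f$ one has $e^{\mathfrak m(f)}\le\mathfrak m(e^f)$. This follows from positivity of $\mathfrak m$ and the tangent-line bound $e^t\ge e^c(1+t-c)$ with $c=\mathfrak m(f)$. We obtain
\[
\sum_{\gamma\in F}e^{\chi(\gamma)-s\varphi(\mu(\gamma))}\le b_s ,
\]
and letting $F\uparrow\Gamma$ gives $\sum_\gamma e^{\chi(\gamma)-s\varphi(\mu(\gamma))}<\infty$.

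\emph{Step 3: symmetrize.} Reindex by $\gamma\mapsto\gamma^{-1}$, using $\chi(\gamma^{-1})=-\chi(\gamma)$ and $\mu(\gamma^{-1})=\i\mu(\gamma)$ from \eqref{inverse}. This gives $\sum_\gamma e^{-\chi(\gamma)-s\,\varphi\circ\i(\mu(\gamma))}<\infty$. So no separate construction for $\varphi\circ\i$ is needed: the sign change of the character comes for free. By the AM--GM inequality,
\[
e^{-s\bar\varphi(\mu(\gamma))}=\bigl(e^{\chi(\gamma)-s\varphi(\mu(\gamma))}\,e^{-\chi(\gamma)-s\varphi\circ\i(\mu(\gamma))}\bigr)^{1/2}\le\tfrac12\bigl(e^{\chi(\gamma)-s\varphi(\mu(\gamma))}+e^{-\chi(\gamma)-s\varphi\circ\i(\mu(\gamma))}\bigr).
\]
Summing over $\gamma$ yields \eqref{eq:sym-series}.

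\emph{Step 4: conclude.} Since $\L_\Gamma$ is $\i$-invariant, $\bar\varphi$ is positive on $\L_\Gamma-\{0\}$. Convergence of $\sum_\gamma e^{-s\bar\varphi(\mu(\gamma))}$ gives $\#\{\gamma:\bar\varphi(\mu(\gamma))<T\}\le e^{sT}\cdot\text{const}$, so $\delta_{\Gamma,\bar\varphi}\le s$. Letting $s\downarrow\delta_{N,\varphi}$ gives $\delta_{\Gamma,\bar\varphi}\le\delta_{N,\varphi}$.

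The main delicate points are in Steps 1--2: checking that $f_\gamma$ is genuinely a bounded function on cosets, so that $\chi$ is well defined and additive, and justifying the Jensen-type inequality for a merely finitely additive mean. Everything else is bookkeeping.
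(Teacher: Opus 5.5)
Your proposal is correct and follows the paper's proof essentially step by step. The common route is: the bounded cocycle $\log U(x\gamma)-\log U(x)$ on $\Gamma/N$, the character $\chi$ obtained from the right-invariant mean, Jensen's inequality combined with the convolution estimate on finite sets, and then inversion $\gamma\mapsto\gamma^{-1}$ followed by AM--GM. Your extra justifications fill in points the paper leaves implicit: that $N$ is non-elementary, as the shadow and convolution lemmas require, and the tangent-line proof of Jensen's inequality for a finitely additive mean.
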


\begin{proof} Fix $s>\delta_{N,\varphi}$, and choose $a$ such that
$$
    \delta_{N,\varphi}<a<s.
$$
Let $U$ be the function defined in Lemma \ref{lem:convolution} for this
choice of $a$. 
First observe that $U$ descends to a function on $\Gamma/N$. Indeed, for
$n_0\in N$,
$$
\begin{aligned}
U(\gamma n_0)
&=
\sum_{n\in N}e^{-a d_\varphi(\gamma n_0,n)}
=
\sum_{n\in N}e^{-a d_\varphi(e,n_0^{-1}\gamma^{-1}n)} \\
&=
\sum_{n\in N}
e^{-a d_\varphi(e,\gamma^{-1}(\gamma n_0^{-1}\gamma^{-1})n)}
=
\sum_{n'\in N}e^{-a d_\varphi(e,\gamma^{-1}n')}
=
U(\gamma),
\end{aligned}
$$
where normality of $N$ was used in the third equality. Hence we may write
$U(\gamma N)\coloneqq U(\gamma)$.

For $\gamma_0\in\Gamma$, define a function $b_{\ga_0}$ on $\Ga/N$:
$$
    b_{\gamma_0}(\gamma N)
    \coloneqq
    \log U(\gamma\gamma_0)-\log U(\gamma),
$$
which is well-defined by the normality of $N$.  
Moreover,  $b_{\gamma_0}$ is bounded. To see this, let $\ga\in \Ga$ and $n\in N$.
By Proposition \ref{t3},
$$
    d_\varphi(\gamma\gamma_0,n)
    \ge
    d_\varphi(\gamma,n)-d_\varphi(e,\gamma_0)-D,
$$
and hence, summing over $n\in N$ gives
$$
    U(\gamma\gamma_0)
    \le
    e^{a(d_\varphi(e,\gamma_0)+D)}U(\gamma).
$$
The reverse inequality follows similarly, using $\gamma=\gamma\gamma_0\gamma_0^{-1}$.
Thus
$$
    |b_{\gamma_0}(\gamma N)|
    \le
    a\max\{d_\varphi(e,\gamma_0),d_\varphi(e,\gamma_0^{-1})\}+O(1),
$$
proving the boundedness of $b_{\ga_0}$. So $b_{\ga_0}\in \ell^\infty(\Gamma/N)$.

Using the mean  $\mathfrak m:\ell^\infty(\Gamma/N)\to\mathbb R$ given by \eqref{mean},
define $\chi:\Ga\to \br$ by
$$\chi(\gamma_0)\coloneqq\mathfrak m(b_{\gamma_0}) \quad\text{ for $\ga_0\in \Ga$}.
$$
The identity
$b_{\gamma_0\ga_1}(\gamma N)
    =
    b_{\ga_1}(\gamma\gamma_0N)+b_{\gamma_0}(\gamma N)
$
and the right $\Gamma$-invariance of $\mathfrak m$ imply
$$
\chi(\gamma_0\ga_1)=\chi(\ga_1)+\chi(\gamma_0) \quad\text{for all $\ga_0, \ga_1\in \Ga$.}
$$
Therefore $\chi:\Gamma\to\mathbb R$ is a character. Moreover, $\chi|_N=0$ since  $b_{\ga_0}=0$ for any $\ga_0\in N$.

By Jensen's inequality for the positive normalized mean $\mathfrak m$,
$$
e^{\chi(\gamma_0)}
=
e^{\mathfrak m(b_{\gamma_0})}
\le
\mathfrak m(e^{b_{\gamma_0}})
=
\mathfrak m\left(
\gamma N\mapsto
\frac{U(\gamma\gamma_0)}{U(\gamma)}
\right).
$$
Let $F\subset\Gamma$ be finite and let $s>a$. For every
$\gamma,\gamma_0\in\Gamma$, we have
$$
    d_\varphi(\gamma,\gamma\gamma_0)
    =
    \varphi(\mu(\gamma^{-1}\gamma\gamma_0))
    =
    \varphi(\mu(\gamma_0))
    =
    d_\varphi(e,\gamma_0).
$$
Hence, by Jensen's inequality, for each $\gamma_0\in F$,
$$
e^{\chi(\gamma_0)}e^{-s d_\varphi(e,\gamma_0)}
\le
\mathfrak m\left(
\gamma N\mapsto
\frac{U(\gamma\gamma_0)}{U(\gamma)}
e^{-s d_\varphi(\gamma,\gamma\gamma_0)}
\right).
$$
Summing over $\gamma_0\in F$, and using the positivity and linearity of
$\mathfrak m$, we obtain
$$
\begin{aligned}
\sum_{\gamma_0\in F}
e^{\chi(\gamma_0)}e^{-s d_\varphi(e,\gamma_0)}
&\le
\mathfrak m\left(
\gamma N\mapsto
\frac1{U(\gamma)}
\sum_{\gamma_0\in F}
U(\gamma\gamma_0)e^{-s d_\varphi(\gamma,\gamma\gamma_0)}
\right).
\end{aligned}
$$
By \Cref{eq:conv-estimate}, applied with
$x=\gamma$, we have
$$
\sum_{\gamma_0\in F}
U(\gamma\gamma_0)e^{-s d_\varphi(\gamma,\gamma\gamma_0)}
\le
\sum_{y\in\Gamma}
U(y)e^{-s d_\varphi(\gamma,y)}
\le
b_sU(\gamma).
$$
Therefore the function inside the mean is bounded above by $b_s$, and so
$$
\sum_{\gamma_0\in F}
e^{\chi(\gamma_0)}e^{-s d_\varphi(e,\gamma_0)}
\le b_s\quad\text{ and hence } \quad   \sum_{\gamma\in\Gamma}
    e^{\chi(\gamma)}e^{-s d_\varphi(e,\gamma)}<\infty.
$$

Changing variables $\gamma\mapsto\gamma^{-1}$  and using
$\chi(\gamma^{-1})=-\chi(\gamma)$, gives
\be\label{eq:opposite-twisted-series}
    \sum_{\gamma\in\Gamma}
    e^{-\chi(\gamma)}e^{-s d_\varphi(e,\gamma^{-1})}
    =
    \sum_{\gamma\in\Gamma}
    e^{-\chi(\gamma)}e^{-s d_{\varphi\circ\op{i}}(e,\gamma)}
    <\infty.\ee 
Combining these estimates and applying the arithmetic-geometric
mean inequality, we get
$$
\begin{aligned}
\infty
&>
\sum_{\gamma\in\Gamma}
\left(
e^{\chi(\gamma)}e^{-s d_\varphi(e,\gamma)}
+
e^{-\chi(\gamma)}e^{-s d_{\varphi\circ\op{i}}(e,\gamma)}
\right)  \\
&\ge
2\sum_{\gamma\in\Gamma}
\exp\left(
-\frac{s}{2}
\bigl(d_\varphi(e,\gamma)+d_{\varphi\circ\op{i}}(e,\gamma)\bigr)
\right) =
2\sum_{\gamma\in\Gamma}
e^{-s d_{\bar\varphi}(e,\gamma)}.
\end{aligned}
$$
This proves \eqref{eq:sym-series}.
\end{proof}

\begin{Rmk}\label{general} The key ingredient for the proof of Theorem \ref{coa} is  Proposition \ref{t3}.
This is  known to be true for relatively Morse subgroups and for linear forms which are positive on the Morse limit cone \cite{DKO_AR}. For instance, our results apply to cusped Hitchin subgroups.
They also apply to
$\theta$-Anosov and $\theta$-Morse subgroups as well, provided we replace the corresponding critical exponents and growth indicators by their $\theta$-versions as introduced in \cite{KOW_indicators}.
    \end{Rmk}

\section{Surviving rigidity for growth indicators and Riemannian exponents}
In this section, we derive Theorems \ref{fd} and \ref{m3} from the
symmetrized critical-exponent inequality, Theorem \ref{coa}.
Let $G$ be a connected semisimple real algebraic group. For a discrete subgroup $\Ga<G$ and
for any vector $u\in \fa^+$, set
\begin{equation}\label{grow}
\psi_{\Gamma}(u):=\|u\| \inf_{\underset{u \in{\mathcal C}}{\mathrm{open\;cones\;}{\mathcal C}\subset \fa^+}}
\tau_{\cal C}
\end{equation}
where $\tau_{\cal C}$ is the abscissa of convergence of the series $\sum_{\ga\in\Ga,\,\mu(\ga)\in\cal C}e^{-t\norm{\mu(\ga)}}$. This definition is independent of the choice of a norm on $\fa$. Set $\psi_{\Gamma}(0)=0$.
The resulting function $\psi_{\Gamma}$ on $\fa^+$ is the growth indicator of $\Ga$, which coincides with the one given in \eqref{gr} for non-elementary discrete subgroups. Quint \cite{Quint02} showed that $\psi_{\Gamma}$ is concave, upper-semicontinuous, and its support is precisely the limit cone:
$$\L_\Gamma= \{u\in \fa^+: \psi_{\Gamma}(u)\ge 0\}.$$
Moreover, $ {\psi_{\Gamma}}$ is  positive on $\op{int}\L_\Gamma$. We have $\psi_{\Gamma} \circ \op{i}=\psi_{\Gamma}$.

\subsection*{Rigidity on opposition-fixed directions} 
For the rest of this section, let $\Gamma<G$ be a Zariski-dense
Borel-Anosov subgroup, and let $N\lhd\Gamma$ be a coamenable normal
subgroup. Then $N$ is necessarily Zariski dense.

We use the following standard properties of Zariski-dense Borel-Anosov
subgroups.
\begin{theorem}\label{st}
We have
$\mathcal L_\Gamma-\{0\}\subset \operatorname{int}\mathfrak a^+$ and $\psi_\Gamma$ is strictly concave on
$\operatorname{int}\mathcal L_\Gamma$.
\end{theorem}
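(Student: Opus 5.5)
Theorem \ref{st} makes two claims, and I would prove them separately, each from known structure theory of Anosov representations.

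\emph{The limit cone avoids the walls.} Fix a simple root $\alpha$ and $v\in\L_\Gamma-\{0\}$. Write $v=\lim t_i\mu(\gamma_i)$ with $t_i\to0$ and $\gamma_i\in\Gamma$. Since $v\neq0$ and the orbit map is a quasi-isometric embedding (Lemma \ref{qi}), we have $t_i|\gamma_i|\asymp t_i\|\mu(\gamma_i)\|\to\|v\|>0$. The Borel--Anosov inequality \eqref{borel}, in its general form $\alpha(\mu(\gamma))\ge C^{-1}|\gamma|-C$ for every simple root $\alpha$, then gives
\[
\alpha(v)=\lim t_i\alpha(\mu(\gamma_i))\ge C^{-1}\liminf t_i|\gamma_i|>0 .
\]
As $\alpha$ was arbitrary, $v\in\inte\fa^+$. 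This part is routine.

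\emph{Strict concavity.} Here I would use the thermodynamic formalism for Anosov representations (Sambarino, Potrie--Sambarino; for general $G$ as in \cite{Sambarino14} and its sequels). For each $\varphi\in\inte\L_\Gamma^\vee$, the function $\gamma\mapsto\varphi(\lambda(\gamma))$ is the period function of a Hölder reparametrization of the Gromov geodesic flow of $\Gamma$. Its entropy is $\delta_{\Gamma,\varphi}$, and the Jordan-projection critical exponent agrees with the Cartan one for Anosov groups. Hence $\delta^*_\Gamma$ is real-analytic on $\inte\L_\Gamma^\vee$. Its level set $\{\delta^*_\Gamma=1\}$ is the boundary of a strictly convex region. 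Strictness comes from positive definiteness of the pressure Hessian: by Livšic, the relevant variance vanishes in direction $\eta$ only if $\eta(\lambda(\gamma))$ is proportional to $\varphi(\lambda(\gamma))$ for all $\gamma$. Benoist's theorem that the Jordan projections of a Zariski-dense subgroup generate a dense subgroup of $\fa$ rules this out unless $\eta\propto\varphi$.

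Finally I would transfer to $\psi_\Gamma$ via the duality of Lemma \ref{tent}. Restricted to $\inte\L_\Gamma$, $\psi_\Gamma$ is the function whose level set $\{\psi_\Gamma=1\}$ is the Legendre-type dual of $\{\delta^*_\Gamma=1\}$, homogeneity of degree one accounting for the rest. Strict convexity together with $C^1$-smoothness of the dual hypersurface implies that $\psi_\Gamma$ has no affine segments transverse to rays. I would also need positivity of $\psi_\Gamma$ on $\inte\L_\Gamma$ and the fact that every interior direction is attained as the tangency direction of some $\varphi$; this follows from analyticity together with the asymptotic cone of $\{\delta^*=1\}$ being $\partial\L_\Gamma^\vee$.

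The main obstacle is the meaning of ``strict'' for a degree-one homogeneous function. Such a function is linear along rays, so ``strictly concave'' must be read as strict concavity transverse to rays, equivalently strict convexity of $\{\psi_\Gamma\ge1\}\cap\inte\L_\Gamma$. The delicate step is excluding flat pieces of that level set. I would try first the differentiability of $\delta^*_\Gamma$ at every $\varphi$ (uniqueness of equilibrium states), since a flat piece of $\{\psi_\Gamma=1\}$ would force a corner of $\{\delta^*_\Gamma=1\}$. For this I would cite Sambarino's and Potrie--Sambarino's results rather than reprove them.
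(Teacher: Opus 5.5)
Your proposal is correct and is essentially the paper's route: the paper cites \cite{KLP17} for the first claim, and your limit argument from the uniform root growth $\alpha(\mu(\gamma))\ge C^{-1}|\gamma|-C$ is exactly that content; for strict concavity the paper cites Quint and Sambarino's thermodynamic formalism, which your duality sketch unpacks. One precision: affine segments of $\psi_\Gamma$ transverse to rays are ruled out by differentiability of $\delta^*_\Gamma$ on $\operatorname{int}\mathcal L_\Gamma^\vee$ together with the existence, at each point of $\operatorname{int}\mathcal L_\Gamma$, of a tangent form lying in $\operatorname{int}\mathcal L_\Gamma^\vee$; that second input is a boundary statement, $\delta^*_\Gamma\to\infty$ at $\partial\mathcal L_\Gamma^\vee$, rather than a consequence of analyticity, and the strict convexity of $\{\delta^*_\Gamma=1\}$ obtained from your variance/Livsic/Benoist step gives the dual property (differentiability of $\psi_\Gamma$), which this statement does not need.
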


The first assertion follows from \cite{KLP17}; the strict concavity follows
from \cite[Section 4]{Quint_indicator} and \cite{sambarino2022report}.

We first prove the growth-indicator statement.

\GrowthIndicatorRigidity*

\begin{proof} By \Cref{lem:symmetric-directions-normality}, $\L_N^{\i}=\L_\Ga^{\i}$.
Since $\psi_{N}=-\infty$ outside $\L_N$, it suffices to show that $\psi_N(v)=\psi_\Ga(v)$ for all $ v\in \mathcal L_N$ and
   $ \op{i}(v)=v$. First consider the case where $v\in\inte\L_\Ga$. Let $\varphi\in\mathfrak a^*$ be a linear form tangent to
$\psi_{\Gamma}$ at $v$, that is, $\varphi\ge \psi_{\Gamma}$ and
$\varphi(v)=\psi_{\Gamma}(v)$. Such a form exists since
$v\in \operatorname{int}\mathcal L_\Gamma$ and $\psi_{\Gamma}$ is concave (cf. \cite[Lemma 2.24]{ELO_anosov}).
  Since $\psi_{\Gamma}$ is invariant under $\op{i}$ and $\op{i}(v)=v$, the linear form
$\varphi\circ\op{i}$ is also tangent to $\psi_{\Gamma}$ at $v$.  Hence replacing $\varphi$ with $\bar{\varphi}$,
we may assume that $\varphi=\varphi\circ\op{i}$. Moreover, since $\Gamma$ is Borel-Anosov, a tangent form to
$\psi_{\Gamma}$ at an interior point of $\mathcal L_\Gamma$ is positive on
$\mathcal L_\Gamma-\{0\}$. 
Because $\varphi$ is tangent to $\psi_{\Gamma}$, we have $\delta_{\Gamma,\varphi}=1$.
Since $\varphi$ is symmetric, Theorem \ref{coa} gives
$$\delta_{N,\varphi}=\delta_{\Gamma,\varphi}=1.$$

Since $\varphi >0$ on $\L_N-\{0\}$ and $\delta_{N,\varphi}=1$, it follows that $\varphi$ is tangent to $\psi_{N}$ at some $u\in \L_N$ (see \cite[Corollary 4.6]{KOW_indicators}).
Now
$$
    \varphi(u)=\psi_{N}(u)\le \psi_{\Gamma}(u)\le \varphi(u),
$$
and hence $\psi_{\Gamma}(u)=\varphi(u)$. 
We claim that $u$ lies on the ray $\mathbb R_{>0}v$. Suppose not. For
$0<t<1$, set
$$
    w_t=(1-t)v+tu.
$$
Since $v\in\operatorname{int}\mathcal L_\Gamma$ and
$u\in\mathcal L_\Gamma$, we have
$w_t\in\operatorname{int}\mathcal L_\Gamma$. By concavity of
$\psi_{\Gamma}$ and the inequalities $\psi_{\Gamma}\le\varphi$, we get
$$
\psi_{\Gamma}(w_t)
\ge
(1-t)\psi_{\Gamma}(v)+t\psi_{\Gamma}(u)
=
(1-t)\varphi(v)+t\varphi(u)
=
\varphi(w_t)
\ge
\psi_{\Gamma}(w_t).
$$
Therefore equality holds throughout. Hence $\psi_{\Gamma}$ is affine on the
segment joining $v$ to $u$, whose interior lies in
$\operatorname{int}\mathcal L_\Gamma$. Since $\psi_{\Gamma}$ is strictly
concave on $\operatorname{int}\mathcal L_\Gamma$, this is possible only if
$u$ and $v$ lie on the same ray. Thus $u=tv$ for some $t>0$.
 By homogeneity, we get $\psi_{N}(v)=\psi_{\Gamma}(v)$, as desired. 
 
We now extend this to the case when $v$ lies in the boundary of $\L_\Gamma$. Since $\L_N$ has nonempty interior and $\i$-invariant, we can choose $w\in \inte \L_N$ with $\i(w)=w$. For $0<t\le 1$,
 define $v_t=(1-t)v+tw$. Then for all $0<t<1$,
 $\i(v_t)=v_t$ and $v_t\in \inte \L_N\subset \inte \L_\Ga$,  and hence $\psi_N(v_t)=\psi_\Ga(v_t)$.
 We use the elementary fact that a finite
concave upper-semicontinuous function on a compact interval is continuous. Applying this to the restrictions of $\psi_N$ and $\psi_\Ga$ to the line segment
$\{v_t: 0\le t\le 1\}$, we get $\psi_N(v)=\psi_\Ga(v)$.
\end{proof}

The hypothesis that $v$ is symmetric cannot be dropped: see
\Cref{thm:growth-indicator-gap}.

\subsection*{Riemannian critical exponents}
Consider the associated Riemannian symmetric space $(G/K, d)$ and set $o = [K]$. 
Fix a $K$-invariant inner product $\langle \cdot, \cdot \rangle$ on $\fg$  induced from the Killing form on $\fg$ so that the induced norm $\|\cdot \|$ satisfies $$d(go, ho)=\|\mu(g^{-1}h)\|\quad\text{ for any $g, h \in G$.}$$

 Since $\psi_{\Gamma}$ is concave, upper-semicontinuous, and the unit norm ball in $\fa$ is strictly convex, there exists a unique unit vector
  ${{u_\Ga}}\in 
  \cal L_\Gamma$ (called the maximal growth direction) such that the critical exponent $\delta_\Ga$ with respect to the Riemannian distance $d$ satisfies:
   \begin{equation}\label{tug}\delta_\Ga=\max_{u\in\mathfrak a^+,\norm{u}=1}\psi_{\Gamma}(u)=\psi_{\Gamma}({u_\Ga}).\end{equation}
We have $\op{i}(u_{\Gamma})=u_\Gamma$.
\RiemannianCriticalExponentRigidity*

\begin{proof}
Let $v_N\in\mathcal L_N$ be the maximal growth direction of $N$.  By Quint's theorem
\cite[Corollary 4.2.4]{Quint02}, the linear form
$$
    \varphi_{N}(u)\coloneqq\langle u,v_N\rangle
$$
satisfies $$\delta_{N,\varphi_{N}}=\delta_N.$$ 
Since  $\Ga$ is Borel-Anosov, we have $\L_\Ga-\{0\}\subset \inte \fa^+$ and hence 
$v_N\in\operatorname{int}\mathfrak a^+$. Since $\fa^+$ is acute with respect to the inner product $\langle \cdot, \cdot\rangle$, 
$\varphi_{N}$ is positive on $\cal L_{\Ga}-\{0\}$. Since both $\psi_N$ and $v_N$
are invariant under $\i$, the uniqueness of the maximal growth direction implies  $\op{i}(v_N)=v_N$ and hence $\varphi_N$ is symmetric.
Therefore Theorem \ref{coa} gives
\be\label{pre}\delta_{\Gamma,\varphi_{N}}=\delta_{N,\varphi_{N}}=\delta_N .\ee 
On the other hand, since $\|v_N\|=1$, the Cauchy--Schwarz inequality gives
$$
    \varphi_{N}(\mu(\gamma))=\langle \mu(\gamma),v_N\rangle
    \le \|\mu(\gamma)\|
    \quad\text{ for all }\gamma\in\Gamma.
$$
Hence
$$
    \{\gamma\in\Gamma:\|\mu(\gamma)\|<T\}
    \subset
    \{\gamma\in\Gamma:\varphi_{N}(\mu(\gamma))<T\},
$$
and therefore $$\delta_\Gamma\le \delta_{\Gamma,\varphi_{N}}.$$
Combining this with \eqref{pre} gives $\delta_\Gamma\le \delta_N$.  Hence $\delta_N=\delta_\Gamma$.
\end{proof}

Indeed, the same proof works for the following: if $\|\cdot\|$ is any norm on $\fa$ which is $\i$-invariant and $\delta_{\star, \|\cdot\|}:=\limsup_{T\to \infty}\frac{1}{T}\log \#\{g\in \star: \|\mu(g)\|<T\}$,  then $$\delta_{\Ga, \|\cdot\|}=\delta_{N, \|\cdot\|}.$$

\section{Sharpness: equal limit cones and different growth indicators}

Theorem \ref{fd} identifies the opposition-fixed directions as a
universal rigidity locus. We now show that this is optimal in the strongest
relevant sense: away from that locus, directional critical exponents may
differ, and the growth indicators may differ at an interior direction even
when the two limit cones coincide.

Let $G=\SL_3(\mathbb R)$. We denote by $(\fa^+)^\vee$ the closed dual
cone of $\fa^+$, namely
$$
    (\fa^+)^\vee
    =
    \{\varphi\in\fa^*:\varphi(v)\ge 0 \text{ for all } v\in\fa^+\}.
$$

We prove two sharpness results. First, for every fixed nonsymmetric
$\varphi\in(\fa^+)^\vee-\{0\}$, there is a Zariski dense Borel-Anosov group
$\Gamma<\SL_3(\mathbb R)$ with a cocyclic Zariski dense normal subgroup
$N\lhd\Gamma$ such that
\be\label{counter}
    \delta_{N,\varphi}<\delta_{\Gamma,\varphi}.
\ee 
Second, there is such a pair $N\lhd\Gamma$ for which the growth
indicators differ at a nonsymmetric interior direction of the normal
subgroup's limit cone.

\medskip

 The following pressure lemma is the key
input for producing a strict gap for nonsymmetric linear forms. By a Schottky representation
$j:F_2=\langle a\rangle *\langle b\rangle\to\SL_2(\mathbb R)$, we mean a faithful
(convex cocompact) representation such that $j(a)$ and $j(b)$ form a
classical ping-pong pair on $\mathbb P^1(\mathbb R)$: there are pairwise
disjoint intervals $I_a^\pm,I_b^\pm$ around their attracting and repelling
fixed points such that
$$    j(a)^{\pm1}\bigl(\mathbb P^1(\mathbb R)- I_a^{\mp}\bigr)
    \subset I_a^\pm,
    \qquad
    j(b)^{\pm1}\bigl(\mathbb P^1(\mathbb R)- I_b^{\mp}\bigr)
    \subset I_b^\pm .$$

\begin{lemma}\label{lem:pressure-gap}
Let $j:F_2\to\SL_2(\mathbb R)$ be a Schottky representation, and let
$\ell:F_2\to\mathbb R_{\ge0}$ be defined by
\be\label{defl}
    \mu(j(\gamma))=\operatorname{diag}(\ell(\gamma),-\ell(\gamma))
    \qquad(\gamma\in F_2).
\ee 
Let $\chi_0:F_2\to\mathbb R$ be a non-trivial homomorphism. Then, for all
sufficiently small $t\ne0$,
$$
    \delta_{F_2,\ell+t\chi_0}
    >
    \delta_{F_2,\ell}.
$$
Here, for a proper function $f:F_2\to\mathbb R$,
$$
    \delta_{F_2,f}
    :=
    \limsup_{T\to\infty}
    \frac1T
    \log\#\{\gamma\in F_2:f(\gamma)<T\}.
$$
\end{lemma}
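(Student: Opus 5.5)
The proof goes through thermodynamic formalism for the Schottky group. Since $j$ is a classical ping-pong representation, $F_2$ is coded by the one-sided shift $\Sigma$ of reduced words on $\{a^{\pm1},b^{\pm1}\}$.

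\emph{Step 1 (coding the two functions).} I will code $\ell$ and $\chi_0$ by Hölder potentials. There is a Hölder function $r:\Sigma\to\mathbb R$ (the Busemann/Iwasawa cocycle of $j$) whose Birkhoff sums $S_{|\gamma|}r$ along the word of $\gamma$ agree with $\ell(\gamma)$ up to a uniformly bounded error. This is the standard consequence of ping-pong and coarse additivity, as in Proposition \ref{Grprod}. The homomorphism $\chi_0$ is coded exactly by the locally constant potential $c(x)=\chi_0(x_0)$, where $x_0$ is the first letter.

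\emph{Step 2 (critical exponents as pressure zeros).} The counting function $\ell+t\chi_0$ is proper for small $|t|$. Indeed, $|\chi_0(\gamma)|\le C|\gamma|$ and $\ell(\gamma)\ge c|\gamma|-C'$, so $\ell+t\chi_0$ is comparable to word length. With the bounded error of Step 1, the Poincaré series $\sum_\gamma e^{-s(\ell+t\chi_0)(\gamma)}$ is then comparable to $\sum_k\sum_{|w|=k}e^{-sS_k(r+tc)(w)}$. Hence $\delta(t):=\delta_{F_2,\ell+t\chi_0}$ is the unique $s>0$ with $P(-s(r+tc))=0$, where $P$ is the topological pressure. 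The pressure $P(-s(r+tc))$ is real analytic in $(s,t)$ and strictly decreasing in $s$, with derivative $-\int (r+tc)\,dm<0$ for the equilibrium state $m$. By the implicit function theorem, $\delta(t)$ is real analytic near $t=0$.

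\emph{Step 3 (symmetry).} The involution $\gamma\mapsto\gamma^{-1}$ preserves $\ell$, because $\mu(g^{-1})=\mu(g)$ in $\SL_2$, and it sends $\chi_0$ to $-\chi_0$. So the two counting sets for $t$ and $-t$ correspond bijectively, and $\delta(t)=\delta(-t)$. Thus $\delta'(0)=0$, and it suffices to show $\delta''(0)>0$.

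\emph{Step 4 (strict convexity — the main obstacle).} Differentiate $P(-\delta(t)(r+tc))=0$ twice at $t=0$. Let $m$ be the equilibrium state of $-\delta_0 r$. Using $\delta'(0)=0$ and $\int c\,dm=0$ (by the $\gamma\mapsto\gamma^{-1}$ symmetry, or directly from the first-order equation), I expect to obtain
\[
\delta''(0)\int r\,dm=\delta_0^2\,\sigma^2_m(c),
\]
where $\sigma^2_m$ is the asymptotic variance. So $\delta''(0)>0$ exactly when $c$ is not cohomologous to a constant, and that constant must be $0$ since the mean is $0$. If $c$ were cohomologous to $0$, its Birkhoff sums over periodic orbits would vanish, i.e.\ $\chi_0(\gamma)=0$ for every cyclically reduced $\gamma$. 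That forces $\chi_0(a)=\chi_0(b)=0$, contradicting nontriviality. Therefore $\delta''(0)>0$, and $\delta(t)>\delta(0)$ for all small $t\neq0$.

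The delicate points are the justification of the second-variation formula (the standard Ruelle–Parry–Pollicott calculus) and checking that the cocycle coding of $\ell$ is genuinely Hölder with bounded error. Both are standard for Schottky groups.
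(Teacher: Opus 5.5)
Your proposal is correct and follows essentially the same route as the paper: code $\ell$ by a positive Hölder potential (the paper cites Quint's Schottky coding) and $\chi_0$ by the locally constant potential $x\mapsto\chi_0(x_0)$, characterize $\delta_{F_2,\ell+t\chi_0}$ as the zero of the pressure, get evenness from $\gamma\mapsto\gamma^{-1}$, and conclude from the second-variation identity $H''(0)\int L\,dm=h^2\operatorname{Var}_m$ together with the Livsic periodic-orbit argument. The only cosmetic difference is how you show the cohomology constant is zero: you use $\int c\,dm=0$, while the paper compares the periodic orbits of $w$ and $w^{-1}$; both arguments work.
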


\begin{proof}
Let $S=\{a^{\pm1},b^{\pm1}\}$ be the free generating set of $F_2$, and let
$$
    \Sigma=\{(x_i)_{i\ge 0}\in S^{\mathbb N}: x_{i+1}\ne x_i^{-1}\}
$$
be the one-sided subshift coding reduced words. We denote by $\sigma$ the
shift map. By Quint's Schottky coding for orbital counting
\cite[Sections 4--5]{Quint_Schottky}, 
there exists a positive H\"older function $L:\Sigma\to\mathbb R$ with the
following property: there is a constant $C>0$ such that, for every reduced
word $\gamma=s_0s_1\cdots s_{n-1}\in F_2$
and every $x\in\Sigma$ beginning with the block $s_0s_1\cdots s_{n-1}$,
we have
$$
    \left|
    \sum_{k=0}^{n-1} L(\sigma^k x)-\ell(\gamma)
    \right|\le C.
$$

Since $\chi_0:F_2\to\mathbb R$ is a homomorphism, it is represented by the
locally constant function
$$
    \mathcal X(x)=\chi_0(x_0),\qquad x=(x_i)_{i\ge0}\in\Sigma.
$$
Thus, if $x$ begins with the reduced word $\gamma=s_0\cdots s_{n-1}$, then
$$
    \sum_{k=0}^{n-1}\mathcal X(\sigma^k x)=\chi_0(\gamma).
$$
Therefore, for every $t\in\mathbb R$,
$$
    \left|
    \sum_{k=0}^{n-1}(L+t\mathcal X)(\sigma^k x)
    -
    \bigl(\ell(\gamma)+t\chi_0(\gamma)\bigr)
    \right|
    \le C.
$$

Since $L$ is positive and continuous on the compact space $\Sigma$, there
is $c_0>0$ such that $L\ge c_0$. Hence, for all sufficiently small
$|t|$, the function
$$
    L_t:=L+t\mathcal X
$$
is still positive. In particular, $\ell+t\chi_0$ is proper on $F_2$, and
the bounded error above does not change the exponential growth exponent.
Thus
$$
    H(t):=\delta_{F_2,\ell+t\chi_0}
$$
is characterized by the pressure equation
$$
    p(-H(t)L_t)=0,
$$
where $p$ denotes topological pressure for $(\Sigma,\sigma)$.

The map $(s,t)\mapsto p(-sL_t)$ is real analytic. Moreover,
$$
    \frac{\partial}{\partial s}p(-sL_t)
    =
    -\int L_t\,dm_{s,t}<0,
$$
where $m_{s,t}$ is the equilibrium state of $-sL_t$. Hence the implicit
function theorem implies that $H(t)$ is real analytic for $|t|$ small.

We next observe that $H$ is even. Since
$\ell(\gamma^{-1})=\ell(\gamma)$ and $
    \chi_0(\gamma^{-1})=-\chi_0(\gamma)$,
the change of variables $\gamma\mapsto\gamma^{-1}$ gives
$$
    \#\{\gamma\in F_2:\ell(\gamma)+t\chi_0(\gamma)<T\}
    =
    \#\{\gamma\in F_2:\ell(\gamma)-t\chi_0(\gamma)<T\}.
$$
Therefore
$ H(t)=H(-t)$  and hence $ H'(0)=0.$

Let $h=H(0)=\delta_{F_2,\ell}$, and let $m$ be the equilibrium state for
the potential $-hL$. Differentiating
$ p\bigl(-H(t)(L+t\mathcal X)\bigr)=0$
at $t=0$, and using the pressure derivative formula, gives
$$
    0
    =
    \int \bigl(-H'(0)L-h\mathcal X\bigr)\,dm.
$$
Since $H'(0)=0$, we obtain
 $ \int \mathcal X\,dm=0$. Differentiating a second time and using the standard pressure Hessian formula
gives
$$
    0
    =
    -H''(0)\int L\,dm
    +
    h^2\operatorname{Var}_m(\mathcal X),
$$
where $\operatorname{Var}_m(\mathcal X)$ is the asymptotic variance of
$\mathcal X$ with respect to the equilibrium state $m$. Hence
$$
    H''(0)\int L\,dm
    =
    h^2\operatorname{Var}_m(\mathcal X).
$$
Since $L>0$, we have $\int L\,dm>0$. It remains to show that
$\operatorname{Var}_m(\mathcal X)>0$.

By the Livsic degeneracy criterion for a topologically mixing subshift of
finite type, $\operatorname{Var}_m(\mathcal X)=0$ if and only if
$\mathcal X$ is cohomologous to a constant. Suppose, for contradiction,
that
$$
    \mathcal X=c+u-u\circ\sigma
$$
for some continuous function $u$ and some constant $c$. Then for every
periodic word $w$ of period $n$,
$$
    \chi_0(w)=\sum_{k=0}^{n-1}\mathcal X(\sigma^k x)=cn.
$$
Applying this to $w^{-1}$, which has the same period $n$, gives
$ \chi_0(w)=-cn.$
Thus $\chi_0(w)=0$ for every cyclically reduced word $w$. This is
impossible because $\chi_0$ is a non-trivial homomorphism on $F_2$. Therefore
$$
    \operatorname{Var}_m(\mathcal X)>0\quad \text{ and hence}\quad
    H''(0)>0.
$$

Since $H$ is real analytic, even, and satisfies $H''(0)>0$, the point
$t=0$ is a strict local minimum of $H$. Hence, for all sufficiently small
$t\ne0$,
$$
    \delta_{F_2,\ell+t\chi_0}>
    \delta_{F_2,\ell}.
$$
This proves the lemma.
\end{proof}

\subsection*{The model representation}
For $v=\operatorname{diag}(v_1,v_2,v_3)\in\mathfrak a^+$ for
$\SL_3(\mathbb R)$, define
$$
    \omega_1(v):=v_1 \quad\text{ and } \quad \omega_2(v):=v_1+v_2.
$$
These are the fundamental weights.

In the rest of this section, fix a linear form
$$
    \varphi\in(\fa^+)^\vee-\{0\},
    \qquad
    \varphi=s_1\omega_1+s_2\omega_2,
    \qquad
    s_1,s_2\ge0,\quad s_1+s_2>0.
$$
Then $\varphi$ is nonsymmetric if and only if $s_1\ne s_2$.

In order to construct an example satisfying \eqref{counter}, we begin with a  Schottky representation
$$j:F_2= \langle a\rangle *\langle b\rangle\to\SL_2(\mathbb R) $$ and  the function $\ell:F_2\to \br_{\ge 0}$ defined in \eqref{defl}.
Let $\chi:F_2 \rightarrow \mathbb{R}$ be the homomorphism with $\chi(a)=1$ and $\chi(b)=0$, and
set
$$
    \mathcal N:=\ker{\chi}=\langle\!\langle b\rangle\!\rangle_{F_2}.
$$

For $\kappa\ge 0$, define
$\rho_{\kappa}:F_2\to\SL_3(\mathbb R)$
by
\be\label{rhokappa}
    \rho_{\kappa}(\gamma)
    =
    \begin{pmatrix}
        e^{-\kappa{\chi}(\gamma)/2}j(\gamma)&0\\
        0&e^{\kappa{\chi}(\gamma)}
    \end{pmatrix}.
\ee

Set
$$
    \Gamma_{\kappa}:=\rho_{\kappa}(F_2),
    \qquad
    N_{\kappa}:=\rho_{\kappa}(\mathcal N).
$$

\begin{theorem}[Model representation: all nonsymmetric positive forms]
\label{thm:model-all-positive-forms} For all sufficiently small $\kappa>0$, we have
$$
    \delta_{\Gamma_{\kappa},\varphi}
    =
    \frac{1}{s_1+s_2}\,
    \delta_{F_2,\ell+\frac{\kappa(s_2-s_1)}{2(s_1+s_2)}{\chi}}\qquad \text{and}\qquad 
    \delta_{N_{\kappa},\varphi}
    =
    \frac{1}{s_1+s_2}\,\delta_{F_2,\ell}.
$$
Consequently,  $\varphi$ is nonsymmetric if and only if
$$\delta_{N_{\kappa},\varphi}   <\delta_{\Gamma_{\kappa},\varphi}.$$

\end{theorem}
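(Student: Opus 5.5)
The plan is to compute $\varphi(\mu(\rho_\kappa(\gamma)))$ explicitly, up to a bounded error, in terms of $\ell(\gamma)$ and $\chi(\gamma)$, and then read off both critical exponents. After that, the strict inequality will follow from Lemma \ref{lem:pressure-gap}.

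\textbf{Step 1: the Cartan projection.} Since $\rho_\kappa(\gamma)$ is block diagonal with an orthogonal-compatible $K$-decomposition, its singular values are
\[
e^{\pm\ell(\gamma)-\kappa\chi(\gamma)/2}\quad\text{and}\quad e^{\kappa\chi(\gamma)}.
\]
By \eqref{sup}, after shrinking $\kappa$ we have $\tfrac32\kappa|\chi(\gamma)|\le \ell(\gamma)+O(1)$. Here we use $|\chi(\gamma)|\le|\gamma|$ and the fact that $\ell$ is comparable to word length for the Schottky group, not just to the Jordan length. Under this inequality the middle singular value is $e^{\kappa\chi}$, except for a bounded set of $\gamma$, which does not affect exponents. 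Hence
\[
\mu(\rho_\kappa(\gamma))=\operatorname{diag}\bigl(\ell-\tfrac{\kappa\chi}{2},\ \kappa\chi,\ -\ell-\tfrac{\kappa\chi}{2}\bigr).
\]
From this,
\[
\omega_1(\mu)=\ell-\tfrac{\kappa}{2}\chi,\qquad \omega_2(\mu)=\ell+\tfrac{\kappa}{2}\chi .
\]
Therefore
\[
\varphi(\mu(\rho_\kappa(\gamma)))=(s_1+s_2)\Bigl(\ell(\gamma)+\tfrac{\kappa(s_2-s_1)}{2(s_1+s_2)}\chi(\gamma)\Bigr),
\]
up to $O(1)$ in the exceptional regime. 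Rescaling $T$ by $s_1+s_2$ then gives the first formula, since $\rho_\kappa$ is faithful.

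\textbf{Step 2: the normal subgroup.} On $\mathcal N$ we have $\chi=0$, so $\varphi(\mu)=(s_1+s_2)\ell$ there, and
\[
\delta_{N_\kappa,\varphi}=\frac{1}{s_1+s_2}\,\delta_{\mathcal N,\ell}.
\]
It remains to show that $\delta_{\mathcal N,\ell}=\delta_{F_2,\ell}$. This is Roblin's rank-one theorem, since $\mathcal N$ is a coamenable normal subgroup of the convex cocompact group $j(F_2)<\SL_2(\mathbb R)$. It can also be obtained from Theorem \ref{coa} applied to $j(F_2)$ in $\SL_2$, where the opposition involution is trivial. I need to double-check that the statement's formula for $\delta_{N_\kappa,\varphi}$ uses $\delta_{F_2,\ell}$, i.e.\ that it is exactly this equality that is being asserted.

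\textbf{Step 3: the equivalence.} If $s_1=s_2$, the two formulas coincide. If $s_1\ne s_2$, set $t=\kappa(s_2-s_1)/(2(s_1+s_2))$, which is small and nonzero. Lemma \ref{lem:pressure-gap} with $\chi_0=\chi$ gives $\delta_{F_2,\ell+t\chi}>\delta_{F_2,\ell}$.

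\textbf{Main obstacle.} The delicate point is Step 1: justifying the ordering of singular values uniformly in $\gamma$. This requires a linear lower bound $\ell(\gamma)\ge c|\gamma|-C$ for the Cartan length (true for a convex cocompact $j$), combined with $|\chi(\gamma)|\le|\gamma|$, so that $\kappa<2c/3$ suffices up to finitely many $\gamma$. A secondary point is that the smallness of $\kappa$ must also make $t$ small enough for Lemma \ref{lem:pressure-gap}.
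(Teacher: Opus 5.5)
Your proposal is correct and follows the paper's proof essentially step for step. You compute the exact Cartan projection $\operatorname{diag}(\ell-\tfrac{\kappa}{2}\chi,\,\kappa\chi,\,-\ell-\tfrac{\kappa}{2}\chi)$ under $\tfrac32\kappa|\chi|<\ell$, derive $\varphi(\mu(\rho_\kappa(\gamma)))=(s_1+s_2)(\ell+t_\varphi\chi)$, apply Roblin's theorem to get $\delta_{\mathcal N,\ell}=\delta_{F_2,\ell}$, and invoke Lemma \ref{lem:pressure-gap} with $|t_\varphi|\le\kappa/2$ kept below the lemma's threshold. This is exactly the paper's argument.

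Your finite exceptional set is harmless but unnecessary. Since $\lambda_1(j(\gamma))\le\ell(\gamma)$, \eqref{sup} already gives $\tfrac32\kappa|\chi(\gamma)|<\ell(\gamma)$ for every $\gamma\ne e$ once $\kappa$ is small.
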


\begin{proof}
By Lemma \ref{lem:pressure-gap}, applied to $j$ and ${\chi}$, there
exists $\eta>0$ such that
\begin{equation}\label{eq:pressure-gap-all-positive-forms}
    \delta_{F_2,\ell+t{\chi}}
    >
    \delta_{F_2,\ell} \quad\text{for all $0<|t|<\eta$.}
\end{equation}
Choose $\kappa>0$ sufficiently small so that $\kappa/2<\eta$ and, for
all $\gamma\in F_2-\{e\}$,
\begin{equation}\label{eq:kappa-choice-all-positive-forms}
    \ell(\gamma)>\frac{3\kappa}{2}|{\chi}(\gamma)|.
\end{equation}
This is possible because $j$ is convex cocompact, so $\ell(\gamma)$
grows linearly in the word length, while $|{\chi}(\gamma)|$ is bounded
above linearly in the word length.
The choice of $\kappa$ ensures that, for every non-trivial
$\gamma\in F_2$, the Cartan projection of $\rho_{\kappa}(\gamma)$ is
$$
    \mu(\rho_{\kappa}(\gamma))
    =
    \operatorname{diag}\left(
        \ell(\gamma)-\frac{\kappa}{2}{\chi}(\gamma),\,
        \kappa{\chi}(\gamma),\,
        -\ell(\gamma)-\frac{\kappa}{2}{\chi}(\gamma)
    \right).
$$

Hence, for all $\gamma\in F_2-\{e\}$,
$$
    \varphi(\mu(\rho_{\kappa}(\gamma)))
    = (s_1+s_2)
    \left(
        \ell(\gamma)
        +   t_{\varphi} {\chi}(\gamma)
    \right)
$$ where $
    t_{\varphi}:=
    \frac{\kappa(s_2-s_1)}{2(s_1+s_2)}$.
Thus 
we obtain
$$
    \delta_{\Gamma_{\kappa},\varphi}
    =
   \tfrac{1}{s_1+s_2}\,
    \delta_{F_2,\ell+t_{\varphi}{\chi}}.
$$

On the other hand, ${\chi}=0$ on $\mathcal N$. Hence, for every
$h\in\mathcal N-\{e\}$,
$\varphi(\mu(\rho_{\kappa}(h))) = (s_1+s_2)\ell(h).$
Therefore
$$
    \delta_{N_{\kappa},\varphi}
    =
 \tfrac{1}{s_1+s_2}\,\delta_{\mathcal N,\ell}.
$$
Since $j(\mathcal N)$ is cocyclic in the convex cocompact group
$j(F_2)<\SL_2(\mathbb R)$, Roblin's rank-one theorem
\cite{Roblin05} gives
$
    \delta_{\mathcal N,\ell}
    =
    \delta_{F_2,\ell}.
$
Thus
$$
    \delta_{N_{\kappa},\varphi}
    =
     \tfrac{1}{s_1+s_2}\,\delta_{F_2,\ell}.
$$

Since $|s_2-s_1|\le s_1+s_2$, we have $|t_{\varphi}|\le \kappa/2<\eta$. 
If $s_1\ne s_2$, then $t_{\varphi}\ne0$, and hence
\eqref{eq:pressure-gap-all-positive-forms} gives
$
    \delta_{F_2,\ell+t_{\varphi}{\chi}}
    >
    \delta_{F_2,\ell}.
$
Therefore $
    \delta_{\Gamma_{\kappa},\varphi} >
    \delta_{N_{\kappa},\varphi}.$
If $s_1=s_2$, then $t_{\varphi}=0$, so the two displayed formulae give
$
    \delta_{\Gamma_{\kappa},\varphi}
    =
    \delta_{N_{\kappa},\varphi}.
$
This completes the proof.
\end{proof}

\subsection*{Adding a Schottky generator}
The normal subgroup $N_{\kappa}$  has only the barycentric direction in its limit cone; in particular, it contains no nonsymmetric interior direction.  To prove the sharpness of the symmetry
condition for growth indicators, we need a nonsymmetric direction lying in the interior of the limit cone of the normal subgroup.  We therefore add a third Schottky
generator $c\mapsto {\mathsf c}$ so that limit cone of the normal subgroup contains the convex cone spanned by two rays containing $\lambda({\mathsf c})$ and $\lambda({\mathsf c}^{-1})$. 
Choosing $\lambda({\mathsf c})$ not fixed by $\i$ gives
nonsymmetric directions in the interior of this cone. This will allow us
to locate the maximizing direction for a nearby nonsymmetric linear form
inside the limit cone of the normal subgroup.

More precisely, fix a letter $c$ and
let $$F_3=F_2*\langle c\rangle.$$

Let $\kappa\ge 0$ be small enough to satisfy \eqref{choice} and recall the representation $\rho_\kappa: F_2\to \SL_3(\br)$ from \eqref{rhokappa}. 
For a loxodromic element
${\mathsf c}\in\SL_3(\mathbb R)$ and $r\in \mathbb N$, define
a representation
\begin{equation}\label{eqn:F3}
        \widehat\rho_{\cc, \kappa ,r}:F_3\to\SL_3(\mathbb R)
\end{equation}
by
$$
   \widehat\rho_{\cc, \kappa,r}|_{F_2}=\rho_{\kappa},\qquad
    \widehat\rho_{\cc, \kappa,r}(c)={\mathsf c}^r.
$$
We choose $\mathsf c\in\SL_3(\mathbb R)$ loxodromic and in Schottky position with
both $\Gamma_0$ and $\rho_\kappa(F_2)$. We choose $\mathsf c$ generically so
that for every $r\ge1$,
 $\widehat\rho_{\mathsf c, \kappa ,r}$
 is Zariski dense. This is possible by \cite[Proposition 4.4]{Tits_free}.
 For all sufficiently large $r$,
the representations $\widehat\rho_{\cc, \kappa,r}$ are then Borel-Anosov by
the combination theorem \cite{DKL19}. For simplicity, we omit the subscript $\mathsf c$ from the notation and write
$$\widehat\rho_{\kappa ,r}:=\widehat\rho_{\cc, \kappa ,r}.$$

Consider the homomorphism:
$$
    \cal X:F_3\to\mathbb Z,
    \qquad
    {\mathcal X}(a)=1,\quad {\mathcal X}(b)=0,\quad {\mathcal X}(c)=0,
$$
and set
$$
    \mathcal N':=\ker{\mathcal X}.
$$
Equivalently, $\mathcal N'$ is the normal closure of $\langle b,c\rangle$
in $F_3$, and $F_3/\mathcal N'\simeq\mathbb Z$.

  Recall $\Gamma_{\kappa}=\rho_{\kappa}(F_2)$ and set 
$$\Gamma_{\kappa,r}:=\widehat\rho_{\kappa,r}(F_3) \quad\text{and}\quad N'_{\kappa,r}:=\widehat\rho_{\kappa,r}(\cal N').
$$

\begin{lemma}[Free-product estimates]\label{lem:free-product-estimates}
Let $\mathcal U\subset(\fa^+)^\vee-\{0\}$ be compact, and let
$\kappa\ge0$ be sufficiently small. There exist positive constants
$a_{\mathcal U}, B_{\mathcal U}, A_{\mathcal U}$ and $r_0\ge 1$
such that the following hold.

\begin{enumerate}
    \item For all $\varphi\in\mathcal U$, $r\ge1$, and
    $q\in\mathbb Z-\{0\}$,
    \begin{equation}\label{eq:c-block-linear-growth}
        \varphi(\mu(\mathsf c^{rq}))
        \ge
        a_{\mathcal U}r|q|-B_{\mathcal U}.
    \end{equation}

    \item For every reduced word $\gamma=g_0c^{q_1}g_1\cdots c^{q_k}g_k$
with  $g_i\in F_2$ and $ q_i\in\mathbb Z-\{0\}$,
all  $r\ge r_0$ and for all $\varphi\in\mathcal U$,
    \begin{multline}\label{eq:free-product-lower}
   \sum_{i=0}^{k}\varphi(\mu(\rho_{\kappa}(g_i)))
    +
    \sum_{i=1}^{k}\varphi(\mu(\mathsf c^{rq_i}))
    -
    A_{\mathcal U}(k+1)\le \varphi(\mu(\widehat\rho_{\kappa,r}(\gamma)))
   \\  \le   \sum_{i=0}^{k}\varphi(\mu(\rho_{\kappa}(g_i)))
    +
    \sum_{i=1}^{k}\varphi(\mu(\mathsf c^{rq_i}))
   \end{multline}

\end{enumerate}
\end{lemma}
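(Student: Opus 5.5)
For part (1) I will use that $\mathsf c$ is loxodromic, so $\lambda(\mathsf c)$ and $\lambda(\mathsf c^{-1})=\i(\lambda(\mathsf c))$ lie in $\inte\fa^+$. Then every $\varphi\in(\fa^+)^\vee-\{0\}$ is positive on both vectors. By compactness of $\mathcal U$ there is $a>0$ with $\varphi(\lambda(\mathsf c^{\pm1}))\ge 2a$ for all $\varphi\in\mathcal U$. Conjugating $\mathsf c$ to a diagonalizable element, $g\exp(\lambda(\mathsf c))g^{-1}$ up to a commuting elliptic factor, the estimate \eqref{lem.cptcartan} gives
\[
\|\mu(\mathsf c^{p})-p\lambda(\mathsf c)\|\le C_{\mathsf c}, \qquad p\ge1,
\]
and similarly for negative powers with $\lambda(\mathsf c^{-1})$. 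Since $\sup_{\mathcal U}\|\varphi\|<\infty$, this yields \eqref{eq:c-block-linear-growth} with $a_{\mathcal U}=2a$ and $B_{\mathcal U}=C_{\mathsf c}\sup_{\mathcal U}\|\varphi\|$, uniformly in $r$.

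For part (2) I will use the fact that $\varphi\in(\fa^+)^\vee$ is a nonnegative combination $s_1\omega_1+s_2\omega_2$, and that $\omega_k(\mu(g))=\log\|\wedge^k g\|$ for the standard norm. \textbf{Upper bound.} Submultiplicativity of operator norms in $\wedge^k\mathbb R^3$ gives $\omega_k(\mu(gh))\le\omega_k(\mu(g))+\omega_k(\mu(h))$. Taking nonnegative combinations, $\varphi\circ\mu$ is subadditive, which gives the upper bound with no error term.

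\textbf{Lower bound.} This is a uniform ping-pong estimate in each $\wedge^k\mathbb R^3$ ($k=1,2$), in the spirit of the proof of Proposition \ref{estimate}. I use the Schottky position of $\mathsf c$ relative to $\rho_\kappa(F_2)$. There are disjoint compact neighbourhoods $\mathcal C_c$ of the attracting/repelling points of $\wedge^k\mathsf c^{\pm1}$ in $\mathbb P(\wedge^k\mathbb R^3)$, and $\mathcal C_F$ containing the limit set of $\wedge^k\rho_\kappa(F_2)$, at definite distance from the repelling hyperplanes of the other factor. For $r\ge r_0$, every $\wedge^k\mathsf c^{rq}$ maps $\mathcal C_F$ into $\mathcal C_c$. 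Every nontrivial $\wedge^k\rho_\kappa(g)$ maps $\mathcal C_c$ into $\mathcal C_F$ up to finitely many short exceptions, which are absorbed by enlarging constants using Anosov-ness of $\rho_\kappa$. Moreover, for unit $v$ with $[v]\in\mathcal C_F$ (resp. $\mathcal C_c$), we have $\|\wedge^k\mathsf c^{rq}v\|\ge c\|\wedge^k\mathsf c^{rq}\|$ (resp. $\|\wedge^k\rho_\kappa(g)v\|\ge c\|\wedge^k\rho_\kappa(g)\|$), with $c>0$ independent of $r,q,g$. Chaining these inequalities along the word $\gamma$ as in \eqref{pp-ineq3} gives
\[
\log\|\wedge^k\widehat\rho_{\kappa,r}(\gamma)\|\ge\sum_i\log\|\wedge^k\rho_\kappa(g_i)\|+\sum_i\log\|\wedge^k\mathsf c^{rq_i}\|-(2k+1)|\log c|,
\]
plus $O(1)$ for the end factors. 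Combining with coefficients $s_1,s_2\le\sup_{\mathcal U}\|\varphi\|$ gives the lower bound with $A_{\mathcal U}$ proportional to $|\log c|$. When $g_0$ or $g_k$ is trivial, the word simply begins or ends with a $c$-block and the same chaining applies.

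\textbf{Main obstacle.} The hard step is getting the contraction constants uniform for all $g\in F_2$, in particular for elements $\rho_\kappa(g)$ whose $\alpha_k$-gap is small. For finitely many short $g$ the ratio is bounded, since there are only finitely many. For long $g$, the Borel-Anosov property of $\rho_\kappa$ (via \eqref{choice}) gives a large $\alpha_k$-gap, so $\wedge^k\rho_\kappa(g)$ is uniformly contracting off a neighbourhood of its repelling hyperplane. The attracting point of $\wedge^k\rho_\kappa(g)$ lies near the limit set, hence in $\mathcal C_F$. For the intermediate range I would first try simply enlarging $A_{\mathcal U}$ by a uniform bound valid for the finitely many $g$ with $|g|\le R_0$.
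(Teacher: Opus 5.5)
You follow the same route as the paper: part (1) from $\lambda(\mathsf c^{\pm1})\in\inte\fa^+$ and compactness of $\mathcal U$, the upper bound from submultiplicativity in $\wedge^1\mathbb R^3$ and $\wedge^2\mathbb R^3$, and the lower bound from a ping-pong estimate in those two representations. Part (1) and the upper bound are fine.

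The gap is in how you treat short elements in the lower bound. You allow finitely many $g\ne e$ for which $\wedge^k\rho_\kappa(g)$ does not map $\mathcal C_c$ into $\mathcal C_F$, and you propose to absorb them by enlarging $A_{\mathcal U}$. Enlarging constants covers the norm comparison at the block $g$ itself, which is indeed harmless for finitely many $g$. It does not cover the failure of the inclusion. That cost is paid at the adjacent $c$-blocks, and it grows with $r|q|$.

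For instance, suppose some $g\ne e$ has $\rho_\kappa(g)x^+_{\mathsf c}\in\mathbb P(V^-_{\mathsf c})$. Neither the Anosov property of $\rho_\kappa$ nor transversality of $\mathsf c$ to the limit set of $\rho_\kappa(F_2)$ excludes this. Writing $\mathsf c$ in diagonal form, one gets
\[
\omega_1\bigl(\mu(\mathsf c^{n}\rho_\kappa(g)\mathsf c^{n})\bigr)\le n\bigl(\lambda_1(\mathsf c)+\lambda_2(\mathsf c)\bigr)+O_g(1),
\qquad
2\,\omega_1\bigl(\mu(\mathsf c^{n})\bigr)=2n\lambda_1(\mathsf c)+O(1).
\]
Take the reduced word $c^{q}gc^{q}$ (so $k=2$), set $n=rq$, and let $\varphi=s_1\omega_1+s_2\omega_2$ with $s_1>0$. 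Then the defect in \eqref{eq:free-product-lower} is at least $s_1\,rq\,\alpha_1(\lambda(\mathsf c))-O(1)$. No choice of $A_{\mathcal U}$ controls this.

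What is needed is that there are no exceptions at all. For every $g\ne e$ and $k=1,2$, the set $\wedge^k\rho_\kappa(g)\,\mathcal C_c$ must lie in one fixed compact set that is uniformly transverse to the repelling hyperplanes of $\wedge^k\mathsf c^{\pm1}$. This is what the hypothesis that $\mathsf c$ is in Schottky position with $\rho_\kappa(F_2)$ has to supply.

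Anosov-ness gives this for all but finitely many $g$, as you say. For the remaining short $g$, however, it is a genuine condition on the choice of $\mathsf c$ (satisfied for generic $\mathsf c$), not something constants can absorb. With it in hand, take $\mathcal C_F$ to be a neighbourhood of $\overline{\bigcup_{g\ne e}\wedge^k\rho_\kappa(g)\,\mathcal C_c}$. The ping-pong then holds for every block, and your chaining yields the lower bound; this is what the paper's appeal to Benoist's estimate amounts to.

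Separately, your upper bound, like the paper's, uses $s_1,s_2\ge0$. That is the paper's parametrization, but the literal dual cone $(\fa^+)^\vee$ is spanned by $\alpha_1,\alpha_2$; for example $\alpha_1=2\omega_1-\omega_2$ lies in it. So the zero-error upper bound should be read for $\mathcal U$ contained in the cone spanned by $\omega_1,\omega_2$.
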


\begin{proof}
Since $\mathsf c$ is loxodromic, both $\lambda(\mathsf c)$ and
$\lambda(\mathsf c^{-1})$ lie in $\operatorname{int}\fa^+$. Compactness
of $\mathcal U\subset(\fa^+)^\vee-\{0\}$, together with
$$
    n^{-1}\mu(\mathsf c^n)\to\lambda(\mathsf c),
    \qquad
    n^{-1}\mu(\mathsf c^{-n})\to\lambda(\mathsf c^{-1}),
$$
gives \eqref{eq:c-block-linear-growth}. For \eqref{eq:free-product-lower}, apply the usual Schottky ping-pong
estimate in the two fundamental representations $\wedge^j\mathbb R^3$,
$j=1,2$ (see \cite{Benoist97}). Taking logarithms and then applying $\varphi$
gives the stated lower bound, uniformly for $\varphi\in\mathcal U$. The
upper estimate  follows directly from
submultiplicativity in the same two fundamental representations.
\end{proof}

\begin{proposition}\label{prop:add-generator-convergence}
Let $\mathcal U\subset(\fa^+)^\vee-\{0\}$ be compact. For all
sufficiently small $\kappa\ge0$, we have
$$
  \lim_{r\to\infty}
  \sup_{\varphi\in\mathcal U}
    \left|
    \delta_{\Gamma_{\kappa,r},\varphi}
    -
    \delta_{\rho_{\kappa}(F_2),\varphi}
    \right|=0.
$$
\end{proposition}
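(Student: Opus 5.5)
The inequality $\delta_{\Gamma_{\kappa,r},\varphi}\ge\delta_{\rho_\kappa(F_2),\varphi}$ is trivial, because $\rho_\kappa(F_2)=\widehat\rho_{\kappa,r}(F_2)\subset\Gamma_{\kappa,r}$. The content is therefore a uniform upper bound. The plan is to bound the $\varphi$-Poincaré series of $\Gamma_{\kappa,r}$ by a geometric series in the number $k$ of $c$-blocks, and to show that its ratio goes to $0$ as $r\to\infty$.

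Fix $\varepsilon>0$ and set $s=\delta_{\rho_\kappa(F_2),\varphi}+\varepsilon$. First we need uniformity in $\varphi$. Since $\rho_\kappa(F_2)$ is Borel-Anosov, $\varphi(\mu(\rho_\kappa(g)))\ge c_{\mathcal U}|g|-C_{\mathcal U}$ for all $\varphi\in\mathcal U$ (Lemma~\ref{qi}, with the constants uniform over the compact set $\mathcal U\subset(\fa^+)^\vee-\{0\}$; note $\L_{\Gamma}\subset\inte\fa^+$). Hence $\varphi\mapsto\delta_{\rho_\kappa(F_2),\varphi}$ is continuous on $\mathcal U$. Indeed, $\varphi'(\mu)\ge(1-\eta)\varphi(\mu)-O(1)$ whenever $\varphi'$ is close to $\varphi$, and the same bound gives
\[
P(\varphi,s):=\sum_{g\in F_2}e^{-s\varphi(\mu(\rho_\kappa(g)))}
\]
bounded uniformly in $\varphi\in\mathcal U$, say by $P_{\mathcal U,\varepsilon}$. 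To get this, cover $\mathcal U$ by finitely many small neighborhoods and use $s>\delta_{\rho_\kappa(F_2),\varphi}(1+\eta')$ on each.

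Next, every element of $F_3$ is uniquely a reduced word $g_0c^{q_1}g_1\cdots c^{q_k}g_k$ with $g_1,\dots,g_{k-1}\ne e$. Dropping that constraint only enlarges the sum. The lower bound in \eqref{eq:free-product-lower} gives, for $r\ge r_0$,
\[
\sum_{\gamma\in F_3}e^{-s\varphi(\mu(\widehat\rho_{\kappa,r}(\gamma)))}
\le \sum_{k\ge0}e^{sA_{\mathcal U}(k+1)}P(\varphi,s)^{k+1}\Big(\sum_{q\ne0}e^{-s\varphi(\mu(\mathsf c^{rq}))}\Big)^k .
\]
By \eqref{eq:c-block-linear-growth}, the inner sum is at most $2e^{sB_{\mathcal U}}\sum_{q\ge1}e^{-sa_{\mathcal U}rq}$, and this tends to $0$ as $r\to\infty$ uniformly in $\varphi\in\mathcal U$. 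Here $s$ is bounded below by $\inf_{\mathcal U}\delta_{\rho_\kappa(F_2),\varphi}>0$. So for $r$ large the ratio $e^{sA_{\mathcal U}}P_{\mathcal U,\varepsilon}\cdot(\text{inner sum})$ is less than $1/2$, the series converges, and $\delta_{\Gamma_{\kappa,r},\varphi}\le s$ for all $\varphi\in\mathcal U$.

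The main obstacle is making sure the constant $A_{\mathcal U}$ in Lemma~\ref{lem:free-product-estimates} does not depend on $r$. The loss per block must be bounded independently of $r$, or else $r_0$ and $A_{\mathcal U}$ would have to be chosen after $\varepsilon$. The lemma as stated provides exactly this for $r\ge r_0$, so the order of choices is: first $\mathcal U$ and $\kappa$, giving $A_{\mathcal U}$ and $r_0$; then $\varepsilon$, giving $P_{\mathcal U,\varepsilon}$; then $r$. A secondary point is that $\delta$ defined via counting with $\limsup$ equals the abscissa of convergence of the Poincaré series. This is standard, since $\varphi\circ\mu$ is proper and bounded below on these groups. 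Since $\varepsilon$ is arbitrary, the supremum over $\mathcal U$ tends to $0$.
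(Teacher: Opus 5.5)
Your proposal is correct and follows the paper's proof essentially step for step. The lower bound comes from $\rho_\kappa(F_2)\subset\Gamma_{\kappa,r}$; then, at $s=\delta_{\rho_\kappa(F_2),\varphi}+\varepsilon$, the free-product lower estimate bounds the Poincar\'e series by a geometric series in the number of $c$-blocks, whose ratio tends to $0$ uniformly on $\mathcal U$. Your one addition is an explicit justification of the uniform bound on $\sum_{g\in F_2}e^{-s\varphi(\mu(\rho_\kappa(g)))}$: continuity of $\varphi\mapsto\delta_{\rho_\kappa(F_2),\varphi}$ from the linear lower bound, plus a finite cover of $\mathcal U$. The paper simply appeals to the pressure formalism for Schottky groups at that point.
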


\begin{proof}
Since $\widehat\rho_{\kappa,r}|_{F_2}=\rho_\kappa$, we have
$\delta_{\Gamma_{\kappa,r},\varphi}
    \ge
    \delta_{\rho_\kappa(F_2),\varphi}$
for all $r$ and all $\varphi\in\mathcal U$.

Fix $\epsilon>0$, and put
$$
    s_\varphi:=\delta_{\rho_\kappa(F_2),\varphi}+\epsilon.
$$
Since $\rho_\kappa(F_2)$ is Borel-Anosov and
$\mathcal U\subset(\fa^+)^\vee-\{0\}$ is compact, the pressure
formalism for Schottky groups implies that $s_\varphi$ is bounded above
and bounded away from zero on $\mathcal U$. Moreover,
$$
    S_F(\varphi)
    :=
    \sum_{g\in F_2}
    e^{-s_\varphi\varphi(\mu(\rho_\kappa(g)))}
$$
is uniformly bounded for $\varphi\in\mathcal U$. By \eqref{eq:c-block-linear-growth},
$$
    S_{\mathsf c}(r,\varphi)
    :=
    \sum_{q\in\mathbb Z-\{0\}}
    e^{-s_\varphi\varphi(\mu(\mathsf c^{rq}))}
    \to0 \quad\text{uniformly for $\varphi\in\mathcal U$}.
$$

Now let $\gamma=g_0c^{q_1}g_1\cdots c^{q_k}g_k$
be reduced. By \eqref{eq:free-product-lower},
$$
\begin{aligned}
&\sum_{\gamma\in F_3}
e^{-s_\varphi\varphi(\mu(\widehat\rho_{\kappa,r}(\gamma)))} \le
e^{s_\varphi A_{\mathcal U}}S_F(\varphi)
\sum_{k\ge0}
\left(
e^{s_\varphi A_{\mathcal U}}
S_F(\varphi)S_{\mathsf c}(r,\varphi)
\right)^k .
\end{aligned}
$$
Since $s_\varphi$ and $S_F(\varphi)$ are uniformly bounded on
$\mathcal U$, while $S_{\mathsf c}(r,\varphi)\to0$ uniformly, the
geometric series converges for all sufficiently large $r$, uniformly in
$\varphi\in\mathcal U$. Hence
$$
    \delta_{\Gamma_{\kappa,r},\varphi}
    \le
    \delta_{\rho_\kappa(F_2),\varphi}+\epsilon
$$
for all $\varphi\in\mathcal U$ and all sufficiently large $r$. Combining
this with the lower bound and letting $\epsilon\to0$ proves the
proposition.
\end{proof}

\begin{lemma}\label{lem:normal-upper-F3-general}
For every $\varphi\in(\fa^+)^\vee-\{0\}$ and all sufficiently small
$\kappa\ge0$, we have
$$
    \limsup_{r\to\infty}
    \delta_{N'_{\kappa,r},\varphi}
    \le
    \delta_{\Gamma_0,\varphi}.
$$
\end{lemma}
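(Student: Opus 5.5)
The idea is to compare the Poincaré series of $N'_{\kappa,r}$ for $\varphi$ with a free-product series built from the \emph{untwisted} length $\ell$. The key point is that on $\mathcal N'$ the character twist cancels exactly, block by block.

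Fix $\varphi=s_1\omega_1+s_2\omega_2\in(\fa^+)^\vee-\{0\}$ and $\epsilon>0$. Put $s:=\delta_{\Gamma_0,\varphi}+\epsilon$. Taking $\kappa=0$ in Theorem \ref{thm:model-all-positive-forms}, or computing directly, gives $\delta_{\Gamma_0,\varphi}=\frac{1}{s_1+s_2}\delta_{F_2,\ell}$. Hence
$$S_0:=\sum_{g\in F_2}e^{-s(s_1+s_2)\ell(g)}<\infty .$$
Apply Lemma \ref{lem:free-product-estimates} with $\mathcal U=\{\varphi\}$. This gives constants $A=A_{\mathcal U}$ and $r_0$ such that, for every reduced word $\gamma=g_0c^{q_1}g_1\cdots c^{q_k}g_k$ and every $r\ge r_0$,
$$\varphi(\mu(\widehat\rho_{\kappa,r}(\gamma)))\ge \sum_{i=0}^k\varphi(\mu(\rho_\kappa(g_i)))+\sum_{i=1}^k\varphi(\mu(\mathsf c^{rq_i}))-A(k+1).$$

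Next I would evaluate the $F_2$-blocks. By the Cartan projection formula from the proof of Theorem \ref{thm:model-all-positive-forms}, which is valid for $\kappa$ small by \eqref{eq:kappa-choice-all-positive-forms}, we have
$$\varphi(\mu(\rho_\kappa(g)))=(s_1+s_2)\bigl(\ell(g)+t_\varphi\chi(g)\bigr)\qquad(g\in F_2),$$
and this also holds trivially for $g=e$. Now suppose $\gamma\in\mathcal N'$. Since $\mathcal X(c)=0$ and $\mathcal X|_{F_2}=\chi$, we get $\sum_i\chi(g_i)=\mathcal X(\gamma)=0$. The twisted terms therefore cancel, and
$$\varphi(\mu(\widehat\rho_{\kappa,r}(\gamma)))\ge (s_1+s_2)\sum_{i=0}^k\ell(g_i)+\sum_{i=1}^k\varphi(\mu(\mathsf c^{rq_i}))-A(k+1).$$

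I would then drop the constraint $\sum\chi(g_i)=0$ and sum over all reduced words. The inner letters $g_1,\dots,g_{k-1}$ are nontrivial and the end letters may be trivial; bounding each $g_i$-sum by $S_0$, which includes the identity term, only increases the bound. With $S_{\mathsf c}(r):=\sum_{q\ne0}e^{-s\varphi(\mu(\mathsf c^{rq}))}$ this gives
$$\sum_{\gamma\in\mathcal N'}e^{-s\varphi(\mu(\widehat\rho_{\kappa,r}(\gamma)))}\le e^{sA}S_0\sum_{k\ge0}\bigl(e^{sA}S_0\,S_{\mathsf c}(r)\bigr)^k .$$
By \eqref{eq:c-block-linear-growth}, $S_{\mathsf c}(r)\to0$ as $r\to\infty$, so the geometric series converges for all large $r$. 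Hence $\delta_{N'_{\kappa,r},\varphi}\le s$ for $r$ large, and letting $\epsilon\to0$ gives the lemma.

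The step I expect to need the most care is the exact cancellation $\sum_i t_\varphi\chi(g_i)=0$. It requires that the additive error in Lemma \ref{lem:free-product-estimates} be only the per-block constant $A(k+1)$, with $A$ independent of $r\ge r_0$, and that the block values be given exactly by the Cartan formula rather than up to an error depending on the word length of $g_i$. Both are supplied by the earlier lemma and by the small choice of $\kappa$. I would double-check that the linear loss $A(k+1)$ is absorbed by the factor $S_{\mathsf c}(r)\to0$ raised to the power $k$.
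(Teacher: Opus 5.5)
Your proof is correct, but it is organized differently from the paper's. Both arguments rest on the same key observation: on $\mathcal N'$ the twist $t_\varphi\chi$ cancels exactly across the $F_2$-blocks in the lower bound of Lemma~\ref{lem:free-product-estimates}.

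The paper does not sum directly after this cancellation. It absorbs the loss $A_\varphi(k+1)$ into the $k$ letters $\mathsf c^{rq_i}$ by passing to $r'=\lfloor r/2\rfloor$, so that $\varphi(\mu(\mathsf c^{rq}))\ge\varphi(\mu(\mathsf c^{r'q}))+A_\varphi$. It then uses subadditivity of $\varphi\circ\mu$ to get the pointwise comparison $\varphi(\mu(\widehat\rho_{\kappa,r}(w)))\ge\varphi(\mu(\widehat\rho_{0,r'}(w)))-A_\varphi$ for $w\in\mathcal N'$. This yields $\delta_{N'_{\kappa,r},\varphi}\le\delta_{\widehat\rho_{0,r'}(F_3),\varphi}$, and the conclusion follows from Proposition~\ref{prop:add-generator-convergence} applied at $\kappa=0$.

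You instead inline the geometric-series argument from the proof of that proposition, using the untwisted block weights $(s_1+s_2)\ell(g_i)$. Your route is more self-contained. It does not need the auxiliary representations $\widehat\rho_{0,r'}$, so it requires neither Schottky position of $\mathsf c$ relative to $\Gamma_0$ nor the free-product estimates at $\kappa=0$. It also avoids the halving trick. All it uses is the convergence of $\sum_{g\in F_2}e^{-s(s_1+s_2)\ell(g)}$ and $S_{\mathsf c}(r)\to0$, with $A$ independent of $r\ge r_0$, which Lemma~\ref{lem:free-product-estimates} supplies.

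The paper's route buys a clean comparison with an actual group, $\widehat\rho_{0,r'}(F_3)$, and reuses an already-proved proposition instead of repeating the summation.
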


\begin{proof}
Apply Lemma \ref{lem:free-product-estimates} with
$\mathcal U=\{\varphi\}$, and write the resulting constant as
$A_\varphi$. Let
$w=w_0c^{q_1}w_1\cdots c^{q_k}w_k\in\mathcal N'$ with
   $ w_i\in F_2$ and $ q_i\in\mathbb Z-\{0\}$.
Since $w\in\mathcal N'$, the sum of the $a$-exponents of the
$F_2$-blocks is zero: $\sum_{i=0}^{k}\mathcal X(w_i)=0$.

For $u\in F_2-\{e\}$, by the definition of $\rho_{\kappa}$ and the
choice of $\kappa$, we have
$$
    \mu(\rho_{\kappa}(u))
    =
    \operatorname{diag}\left(
        \ell(u)-\frac{\kappa}{2}{\mathcal X}(u),\,
        \kappa{\mathcal X}(u),\,
        -\ell(u)-\frac{\kappa}{2}{\mathcal X}(u)
    \right).
$$
Hence
$$
    \omega_1(\mu(\rho_{\kappa}(u)))
    =
    \ell(u)-\frac{\kappa}{2}{\mathcal X}(u)\text{ and }
    \omega_2(\mu(\rho_{\kappa}(u)))
    =
    \ell(u)+\frac{\kappa}{2}{\mathcal X}(u).
$$
Therefore
$$
    \varphi(\mu(\rho_{\kappa}(u)))
    =
    \varphi(\mu(\rho_{0}(u)))
    +
    \frac{\kappa}{2}(s_2-s_1){\mathcal X}(u).
$$
Applying this to the $F_2$-blocks $w_i$, and using
$\sum_i{\mathcal X}(w_i)=0$, gives
\begin{equation}\label{eq:F2-block-cancellation-general}
    \sum_{i=0}^{k}
    \varphi(\mu(\rho_{\kappa}(w_i)))
    =
    \sum_{i=0}^{k}
    \varphi(\mu(\rho_{0}(w_i))).
\end{equation}

Let $r'=\lfloor r/2\rfloor$. By \eqref{eq:c-block-linear-growth}, after
increasing $r$, we may assume that, for every $q\ne0$,
\begin{equation}\label{eq:C-comparison-varphi}
    \varphi(\mu(\mathsf c^{rq}))
    \ge
    \varphi(\mu(\mathsf c^{r'q}))+A_\varphi .
\end{equation}
Combining \eqref{eq:free-product-lower},
\eqref{eq:F2-block-cancellation-general}, and
\eqref{eq:C-comparison-varphi}, we get
$$
\begin{aligned}
    \varphi(\mu(\widehat\rho_{\kappa,r}(w)))
    \ge&
    \sum_{i=0}^{k}
    \varphi(\mu(\rho_0(w_i)))
    +
    \sum_{i=1}^{k}
    \varphi(\mu(\mathsf c^{r'q_i}))
    -
    A_\varphi \\\ge &  \varphi(\mu(\widehat\rho_{0,r'}(w)))-A_\varphi
    \qquad(w\in\mathcal N')
\end{aligned}
$$

It follows that, for every $s>0$,
$$
\begin{aligned}
    \sum_{w\in\mathcal N'}
    e^{-s\varphi(\mu(\widehat\rho_{\kappa,r}(w)))}
    &\le
    e^{sA_\varphi}
    \sum_{w\in\mathcal N'}
    e^{-s\varphi(\mu(\widehat\rho_{0,r'}(w)))} \\
    &\le
    e^{sA_\varphi}
    \sum_{\gamma\in F_3}
    e^{-s\varphi(\mu(\widehat\rho_{0,r'}(\gamma)))} .
\end{aligned}
$$
Therefore
$$
    \delta_{N'_{\kappa,r},\varphi}
    \le
    \delta_{\widehat\rho_{0,r'}(F_3),\varphi}.
$$
Using Proposition \ref{prop:add-generator-convergence} for
$\widehat\rho_{0,r'}$, we obtain
$$
    \limsup_{r\to\infty}
    \delta_{N'_{\kappa,r},\varphi}
    \le
    \delta_{\Gamma_0,\varphi}.
$$
\end{proof}

\begin{theorem}\label{thm:omega1-gap}\label{ns0}
If $\varphi\in(\fa^+)^\vee-\{0\}$ is nonsymmetric, then
there exists a Zariski dense Borel-Anosov subgroup
$\Gamma<\SL_3(\mathbb R)$ and a cocyclic Zariski dense normal subgroup
$N\lhd\Gamma$ such that
$$
    \delta_{N,\varphi}< \delta_{\Gamma,\varphi}.
$$
\end{theorem}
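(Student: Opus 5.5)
The plan is to take $\Gamma=\Gamma_{\kappa,r}=\widehat\rho_{\kappa,r}(F_3)$ and $N=N'_{\kappa,r}=\widehat\rho_{\kappa,r}(\mathcal N')$ for suitable small $\kappa>0$ and large $r$. Then show that the $\varphi$-critical exponent gap produced by the model representation (Theorem \ref{thm:model-all-positive-forms}) survives the addition of the third Schottky generator. By the choices made before \eqref{eqn:F3}, for every sufficiently large $r$ the group $\Gamma_{\kappa,r}$ is Zariski dense and Borel--Anosov. Moreover $\widehat\rho_{\kappa,r}$ is faithful, since $F_3$ is torsion-free and the image is Anosov. Hence $\Gamma/N\simeq F_3/\mathcal N'\simeq\mathbb Z$, so $N$ is cocyclic. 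Finally, $N$ is an infinite normal subgroup of a Zariski dense subgroup of the simple group $\SL_3(\mathbb R)$, so its Zariski closure is a normal infinite algebraic subgroup and therefore equals $\SL_3(\mathbb R)$. This is exactly the argument used at the end of the proof of Theorem \ref{m}.

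For the strict inequality, write $\varphi=s_1\omega_1+s_2\omega_2$ with $s_1\ne s_2$. The chain I aim for is
\[
\limsup_{r\to\infty}\delta_{N'_{\kappa,r},\varphi}\le\delta_{\Gamma_0,\varphi}=\tfrac{1}{s_1+s_2}\delta_{F_2,\ell}<\tfrac{1}{s_1+s_2}\delta_{F_2,\ell+t_\varphi\chi}=\delta_{\Gamma_\kappa,\varphi}\le\delta_{\Gamma_{\kappa,r},\varphi}.
\]
The first inequality is Lemma \ref{lem:normal-upper-F3-general}. The equality $\delta_{\Gamma_0,\varphi}=\frac1{s_1+s_2}\delta_{F_2,\ell}$ is Theorem \ref{thm:model-all-positive-forms} with $\kappa=0$. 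Alternatively, $\mu(\rho_0(\gamma))=\diag(\ell(\gamma),0,-\ell(\gamma))$, so $\varphi(\mu(\rho_0(\gamma)))=(s_1+s_2)\ell(\gamma)$ directly. The strict middle inequality is Theorem \ref{thm:model-all-positive-forms} for small $\kappa>0$, which applies because $t_\varphi\neq0$ (it ultimately rests on Lemma \ref{lem:pressure-gap}). The last inequality holds because $\rho_\kappa(F_2)\subset\Gamma_{\kappa,r}$; Proposition \ref{prop:add-generator-convergence} even gives convergence, though only monotonicity is needed.

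Fix $\kappa>0$ small enough that Theorem \ref{thm:model-all-positive-forms}, Lemma \ref{lem:normal-upper-F3-general}, and the Borel--Anosov and Zariski density requirements all hold. Set $\eta:=\delta_{\Gamma_\kappa,\varphi}-\delta_{\Gamma_0,\varphi}>0$. Lemma \ref{lem:normal-upper-F3-general} then yields $r$ large with $\delta_{N'_{\kappa,r},\varphi}<\delta_{\Gamma_0,\varphi}+\eta/2<\delta_{\Gamma_\kappa,\varphi}\le\delta_{\Gamma_{\kappa,r},\varphi}$. All requirements are imposed at the single fixed $\kappa$, and the largeness conditions on $r$ are finitely many, so they can be met simultaneously.

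The step I expect to need the most care is the compatibility of the parameter choices. Specifically, Lemma \ref{lem:normal-upper-F3-general} compares $\widehat\rho_{\kappa,r}$ with $\widehat\rho_{0,r'}$, which requires $\mathsf c$ to be in Schottky position with both $\Gamma_0$ and $\Gamma_\kappa$ (arranged earlier). It also requires that Proposition \ref{prop:add-generator-convergence} be applicable at $\kappa=0$ with $r'\to\infty$. If necessary, I would treat the case $\varphi\in\partial(\fa^+)^\vee$ (that is, $s_1=0$ or $s_2=0$) by noting that none of the cited statements require $\varphi$ to lie in the interior, since $\mathcal L_\Gamma-\{0\}\subset\inte\fa^+$ makes $\varphi$ positive there.
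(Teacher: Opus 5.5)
Your proposal is correct and follows the paper's proof essentially verbatim. It uses the same pair $N'_{\kappa,r}\lhd\Gamma_{\kappa,r}$, the upper bound $\limsup_{r}\delta_{N'_{\kappa,r},\varphi}\le\delta_{\Gamma_0,\varphi}$ from Lemma \ref{lem:normal-upper-F3-general}, and the strict model gap $\delta_{\Gamma_0,\varphi}<\delta_{\Gamma_\kappa,\varphi}$ from Theorem \ref{thm:model-all-positive-forms}. The only difference is on the ambient side. You use the trivial inclusion bound $\delta_{\Gamma_\kappa,\varphi}\le\delta_{\Gamma_{\kappa,r},\varphi}$, whereas the paper cites the convergence in Proposition \ref{prop:add-generator-convergence}. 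The inclusion bound is indeed all that is needed there, although that proposition is still used, at $\kappa=0$, inside Lemma \ref{lem:normal-upper-F3-general}.
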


\begin{proof}
 Write $\varphi=s_1\omega_1+s_2\omega_2$ with $s_1,s_2\ge0$ and $s_1+s_2>0$.
By  Theorem \ref{thm:model-all-positive-forms}, together with
Lemma \ref{lem:pressure-gap},  we have
$$
    \delta_{\Gamma_{\kappa},\varphi}
    >
    \delta_{\Gamma_0,\varphi} \quad \text{ for all sufficiently small $\kappa>0$}.
$$
 Indeed, if
$
    t_\varphi
    :=
    \frac{\kappa(s_2-s_1)}{2(s_1+s_2)}\ne 0
$
is sufficiently small, then 
$$
    \delta_{\Gamma_{\kappa},\varphi}
    =
\tfrac{1}{s_1+s_2}\, 
    \delta_{F_2,\ell+t_\varphi{\chi}}
    >
   \tfrac{1}{s_1+s_2}\, 
    \delta_{F_2,\ell}
    =
    \delta_{\Gamma_0,\varphi}.
$$
By Proposition \ref{prop:add-generator-convergence}, we also have
$$ \lim_{r\to \infty} \delta_{\Gamma_{\kappa,r},\varphi}
    =
    \delta_{\Gamma_{\kappa},\varphi}.
$$
On the other hand, by Lemma \ref{lem:normal-upper-F3-general},
$$
    \limsup_{r\to\infty}
    \delta_{N'_{\kappa,r},\varphi}
    \le
    \delta_{\Gamma_0,\varphi}.
$$
Combining the last three displays gives
$$
    \limsup_{r\to\infty}
    \delta_{N'_{\kappa,r},\varphi}
    <
    \lim_{r\to\infty}
    \delta_{\Gamma_{\kappa,r},\varphi}.
$$
Hence, for all sufficiently large $r$,
$$
    \delta_{N'_{\kappa,r},\varphi}
    <
    \delta_{\Gamma_{\kappa,r},\varphi}.
$$
Fix such an $r$, and set
$$
    \Gamma:=\Gamma_{\kappa,r},
    \qquad
    N:=N'_{\kappa,r}.
$$
For $r$ sufficiently large, $\Gamma$ is Borel-Anosov and Zariski dense
by the choice of $\cc$. Since the representation is Borel-Anosov, it is
faithful. Hence
$$
    \Gamma/N\simeq F_3/\mathcal N'\simeq \mathbb Z,
$$
so $N$ is a cocyclic normal subgroup of $\Gamma$.
Finally, $N$ is Zariski dense. This proves the theorem.
\end{proof}

\subsection*{A growth-indicator gap inside a common limit cone}
We now turn to Theorem \ref{thm:growth-indicator-gap}.
Let $\kappa>0$ be sufficiently small and set for $r\in \N$,
$$
    \Gamma_r:=\widehat\rho_{\kappa,r}(F_3),
    \quad 
    N'_r:=N'_{\kappa,r}\quad
    \Gamma_\infty:=\rho_{\kappa}(F_2).
$$
Let $$\varphi_0=\omega_1+\omega_2.$$

For $r$ sufficiently large, $\Gamma_r$ is Zariski dense and
Borel-Anosov. Hence $\psi_{\Gamma_r}$ is strictly concave on
$\operatorname{int}\mathcal L_{\Gamma_r}$.
 Therefore, for every
$\varphi$ positive on $\mathcal L_{\Gamma_r}-\{0\}$, the variational
formula \eqref{var}
has a unique maximizer $v_{r,\varphi}\in
\operatorname{int}\mathcal L_{\Gamma_r}$, characterized by
$$
    \varphi(v_{r,\varphi})=1,
    \qquad
    \delta_{\Gamma_r,\varphi}=\psi_{\Gamma_r}(v_{r,\varphi}).
$$

For the model group $\Gamma_\infty$, we define $v_{\infty,\varphi}$
similarly. Its uniqueness follows from the explicit Schottky pressure
formula for $\delta_{\Gamma_\infty,\varphi}$, equivalently from the strict
convexity of the associated pressure function.

Note also that $\varphi\mapsto \delta_{\Gamma_r,\varphi}$ is convex
on $\operatorname{int}(\mathcal L_{\Gamma_r}^{\vee})=\{\varphi\in \fa^*:\varphi >0 \text{ on $\L_{\Gamma_r}-\{0\}$}\}$. Indeed, for each
fixed nonzero $v\in\mathcal L_{\Gamma_r}$, the function
$\varphi\mapsto \frac{\psi_{\Gamma_r}(v)}{\varphi(v)}$
is convex on $\operatorname{int}(\mathcal L_{\Gamma_r}^{\vee})$, since $\psi_{\Gamma_r}(v)\ge 0$ and
$x\mapsto 1/x$ is convex on $(0,\infty)$. Hence
$\varphi\mapsto \delta_{\Gamma_r,\varphi}$
is convex as the supremum of convex functions.

\begin{lemma}[Uniform convergence of maximizing directions]
\label{lem:uniform-maximizer-convergence}
There exists a neighborhood $\mathcal U$ of $\varphi_0$ such that as $r\to\infty$,
$$
    v_{r,\varphi}\to v_{\infty,\varphi} \quad\text{uniformly for $\varphi\in\mathcal U$}.
$$

\end{lemma}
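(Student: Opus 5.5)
The plan is to deduce the convergence of maximizers from the uniform convergence of dual critical exponents (Proposition \ref{prop:add-generator-convergence}), using convex duality and a uniform strict convexity of the limiting function $\varphi\mapsto\delta_{\Gamma_\infty,\varphi}$ near $\varphi_0$. The key observation is that the maximizing direction is the gradient of the dual critical exponent. Concretely, $v_{r,\varphi}$ is characterized by $\varphi(v_{r,\varphi})=1$ and $\psi_{\Gamma_r}\le \delta_{\Gamma_r,\varphi}\,\varphi$ with equality at $v_{r,\varphi}$. By \eqref{var}, for any nearby $\varphi'$,
\[
\delta_{\Gamma_r,\varphi'}\ \ge\ \frac{\psi_{\Gamma_r}(v_{r,\varphi})}{\varphi'(v_{r,\varphi})}=\frac{\delta_{\Gamma_r,\varphi}}{\varphi'(v_{r,\varphi})}.
\]
Hence $-\delta_{\Gamma_r,\varphi}\,v_{r,\varphi}$ is a subgradient of the convex function $D_r:=\delta_{\Gamma_r,\cdot}$ at $\varphi$, after linearizing $1/\varphi'(v)$ around $\varphi'=\varphi$. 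More cleanly, I would work with the convex, $1$-homogeneous (up to inverse) function $\varphi\mapsto 1/\delta_{\Gamma_r,\varphi}=\inf_v \varphi(v)/\psi_{\Gamma_r}(v)$. This function is concave, being an infimum of linear functions of $\varphi$, and its supergradient at $\varphi$ is exactly $v_{r,\varphi}/\psi_{\Gamma_r}(v_{r,\varphi})$. Uniqueness of the maximizer from strict concavity (Theorem \ref{st}) gives differentiability, so
\[
\nabla_\varphi\bigl(1/\delta_{\Gamma_r,\varphi}\bigr)=v_{r,\varphi}\,\delta_{\Gamma_r,\varphi}^{-1}.
\]
The same holds for $\Gamma_\infty$, with uniqueness supplied by the Schottky pressure formula.

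The steps, in order, are as follows.
\begin{enumerate}
\item Fix a compact convex neighborhood $\mathcal U'$ of $\varphi_0$ inside $\operatorname{int}\mathcal L_{\Gamma_\infty}^\vee$, so that all forms in $\mathcal U'$ are positive on $\mathcal L_{\Gamma_r}-\{0\}$ for large $r$. For this I would note that $\mathcal L_{\Gamma_r}\to\mathcal L_{\Gamma_\infty}$ in the sense that any $\varphi$ strictly positive on a slightly enlarged cone works, via Lemma \ref{lem:free-product-estimates}. Failing that, one can simply take $\mathcal U'\subset \operatorname{int}(\fa^+)^\vee$, which is allowed since $\varphi_0=\omega_1+\omega_2$ is strictly positive on $\fa^+-\{0\}$.
\item Apply Proposition \ref{prop:add-generator-convergence} on $\mathcal U'$. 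This gives $f_r:=1/\delta_{\Gamma_r,\cdot}\to f_\infty:=1/\delta_{\Gamma_\infty,\cdot}$ uniformly on $\mathcal U'$, using that the exponents are bounded away from $0$ and $\infty$.
\item Invoke the standard fact from convex analysis: if concave functions converge uniformly on an open convex set to a limit that is differentiable there, then their supergradients converge to the gradient of the limit, locally uniformly. The uniform version follows by a contradiction/compactness argument, since $\nabla f_\infty$ is continuous because a differentiable concave function is $C^1$.
\item Conclude $v_{r,\varphi}=\delta_{\Gamma_r,\varphi}\nabla f_r(\varphi)\to \delta_{\Gamma_\infty,\varphi}\nabla f_\infty(\varphi)=v_{\infty,\varphi}$ uniformly on a smaller neighborhood $\mathcal U$.
\end{enumerate}

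I expect the main obstacle to be justifying differentiability of $f_\infty$, that is, uniqueness of $v_{\infty,\varphi}$ for all $\varphi$ near $\varphi_0$, since $\Gamma_\infty$ is reducible and Theorem \ref{st} does not directly apply. I would verify it directly from the model. By Theorem \ref{thm:model-all-positive-forms}, $\delta_{\Gamma_\infty,\varphi}$ equals $\frac{1}{s_1+s_2}H(t_\varphi)$ with $H$ the real-analytic pressure function from Lemma \ref{lem:pressure-gap}. This gives explicit analyticity of $f_\infty$ in $\varphi$ near $\varphi_0$, hence differentiability, and uniqueness of supergradients follows. The mild risk in this step is that the formula only covers $\varphi\in(\fa^+)^\vee$. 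That suffices here because $\varphi_0$ lies in the interior of $(\fa^+)^\vee$.
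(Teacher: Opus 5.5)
Your proposal is correct and follows essentially the same route as the paper. Both arguments take a compact neighborhood inside $\operatorname{int}(\fa^+)^\vee$ and obtain uniform convergence of the exponents there from Proposition \ref{prop:add-generator-convergence}. Both get differentiability from uniqueness of the maximizer (strict concavity for $\Gamma_r$, the real-analytic pressure formula for $\Gamma_\infty$), then use convergence of gradients under uniform convergence of convex or concave functions, and finally identify the gradient with the maximizing vector.

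The only difference is cosmetic. You work with the concave function $1/\delta_{\Gamma_r,\varphi}=\inf_v \varphi(v)/\psi_{\Gamma_r}(v)$, an infimum of linear functions, so its supergradient $v_{r,\varphi}/\delta_{\Gamma_r,\varphi}$ can be read off directly. The paper works with the convex function $\delta_{\Gamma_r,\varphi}$ and derives $D_\varphi\delta_{\Gamma_r,\varphi}(\eta)=-\delta_{\Gamma_r,\varphi}\,\eta(v_{r,\varphi})$ from one-sided difference quotients.
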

\begin{proof}
Choose a compact neighborhood $\mathcal U$ of $\varphi_0$ in $\operatorname{int}((\fa^+)^\vee)$.
By Proposition \ref{prop:add-generator-convergence}, we have
$$
    \delta_{\Gamma_r,\varphi}
   \to
    \delta_{\Gamma_\infty,\varphi} \quad\text{uniformly for $\varphi\in\mathcal U$}.
$$
After shrinking $\mathcal U$ if necessary, the limiting exponent is bounded
away from zero on $\mathcal U$.
For each sufficiently large $r$, the uniqueness of the maximizer noted
above implies that $\varphi\mapsto\delta_{\Gamma_r,\varphi}$
is differentiable on $\mathcal U$. For the limiting model group, writing
$
    \varphi=s_1\omega_1+s_2\omega_2$
the explicit model formula gives
$$
    \delta_{\Gamma_\infty,\varphi}
    =
    \frac{1}{s_1+s_2}\,
    \delta_{F_2,\ell+
    \frac{\kappa(s_2-s_1)}{2(s_1+s_2)}\chi}.
$$
By the pressure argument used in the proof of
\Cref{lem:pressure-gap}, the right-hand side is real analytic for
$\varphi$ near $\varphi_0$. Hence
$$\varphi\mapsto\delta_{\Gamma_\infty,\varphi}
$$
is $C^1$ on $\mathcal U$, after shrinking $\mathcal U$ if necessary. We now use the following standard consequence of convexity: if
differentiable convex functions converge locally uniformly to a $C^1$
convex function, then their differentials converge locally uniformly on
compact subsets. Since the functions
$\varphi\mapsto \delta_{\Gamma_r,\varphi}$ are convex and converge
uniformly on $\mathcal U$ to the $C^1$ function
$\varphi\mapsto \delta_{\Gamma_\infty,\varphi}$, 
 we obtain
$$
    D_\varphi\delta_{\Gamma_r,\varphi}
    \to
    D_\varphi\delta_{\Gamma_\infty,\varphi}
    \qquad\text{uniformly for }\varphi\in\mathcal U.
$$

For every $\eta\in\fa^*$, we get
\be\label{dvv}
    D_\varphi\delta_{\Gamma_r,\varphi}(\eta)
    =
    -\delta_{\Gamma_r,\varphi}\,\eta(v_{r,\varphi}).
\ee 
Indeed, set $F(\varphi)=\delta_{\Gamma_r,\varphi}$. By \eqref{var}, for small $t$,
$$
    F(\varphi+t\eta)
    \ge
    \frac{\psi_{\Gamma_r}(v_{r,\varphi})}{(\varphi+t\eta)(v)}
    =
    \frac{F(\varphi)}{1+t\eta(v_{r,\varphi})}.
$$
Dividing by $t>0$ and letting $t\to0^+$ gives
$D_\varphi F(\eta)\ge -F(\varphi)\eta(v_{r,\varphi}).$
Applying the same argument with $-\eta$ in place of $\eta$ gives the reverse inequality, yielding \eqref{dvv}.

The same formula holds for the model group:
$$
    D_\varphi\delta_{\Gamma_\infty,\varphi}(\eta)
    =
    -\delta_{\Gamma_\infty,\varphi}\,\eta(v_{\infty,\varphi}).
$$
Since $\delta_{\Gamma_r,\varphi}\to\delta_{\Gamma_\infty,\varphi}>0$
uniformly on $\mathcal U$, the convergence of differentials implies that for every $\eta\in\fa^*$,
$$
    \eta(v_{r,\varphi})\to \eta(v_{\infty,\varphi})
    \qquad\text{uniformly for }\varphi\in\mathcal U.
$$
 Choosing a basis of $\fa^*$, we conclude that
$ v_{r,\varphi}\to v_{\infty,\varphi}$ uniformly for $\varphi\in\mathcal U$.

\end{proof}

\begin{lemma}\label{lem:maximizer-near-barycenter}
Suppose that
$\lambda({\mathsf c})\ne \op{i}(\lambda({\mathsf c}))$. Then there exist a
conic neighborhood $U$ of the barycentric ray in $\fa^+$, a neighborhood
$\mathcal U$ of $\varphi_0$ in $\fa^*$, and $r_0\ge1$ such that, for
all $r\ge r_0$ and all $\varphi\in\mathcal U$, 
 $$v_{r,\varphi}\in U\quad\text{and}\quad 
    U\subset \operatorname{int}\L_{N'_r}
    \quad\text{for all } r\ge r_0.
$$
\end{lemma}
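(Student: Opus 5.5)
Two things have to be proved: the maximizers $v_{r,\varphi}$ stay near the barycentric ray, and a fixed cone around that ray lies in $\inte\L_{N'_r}$ for all large $r$. The first comes from Lemma~\ref{lem:uniform-maximizer-convergence} together with a computation in the model group $\Gamma_\infty=\rho_\kappa(F_2)$. The second comes from the commutator trick of Lemma~\ref{lem:symmetric-directions-normality} and Benoist's Theorem~\ref{thm:Benoist-Jordan-limit-cone}.

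\emph{Step 1: locate $v_{\infty,\varphi_0}$.} The form $\varphi_0=\omega_1+\omega_2$ is symmetric, and $\psi_{\Gamma_\infty}\circ\i=\psi_{\Gamma_\infty}$. By uniqueness of the maximizer, $\i(v_{\infty,\varphi_0})=v_{\infty,\varphi_0}$. In $\SL_3(\br)$ the $\i$-fixed rays of $\fa^+$ form exactly the barycentric ray $\br_{>0}\operatorname{diag}(1,0,-1)$, so $v_{\infty,\varphi_0}$ lies on it. This is also visible from the formula: at $t_{\varphi_0}=0$ the derivative term vanishes, since $H'(0)=0$. The model formula in Theorem~\ref{thm:model-all-positive-forms} is real analytic in $\varphi$, so $\varphi\mapsto v_{\infty,\varphi}$ is continuous near $\varphi_0$; this follows from \eqref{dvv}, because $v_{\infty,\varphi}$ is read off from $D_\varphi\delta_{\Gamma_\infty,\varphi}$. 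Fix any conic neighborhood $U$ of the barycentric ray; Step 2 will specify how small it must be. Shrink $\mathcal U$ so that the direction of $v_{\infty,\varphi}$ lies in a smaller cone $U'\Subset U$ for all $\varphi\in\mathcal U$. Lemma~\ref{lem:uniform-maximizer-convergence} then gives $v_{r,\varphi}\in U$ for all $\varphi\in\mathcal U$ and $r\ge r_0$.

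\emph{Step 2: a fixed cone inside $\L_{N'_r}$, uniformly in $r$.} The element $c\in\mathcal N'$ maps to $\mathsf c^r$, so $\lambda(\mathsf c)$ and $\lambda(\mathsf c^{-1})=\i\lambda(\mathsf c)$ lie in $\L_{N'_r}$. Because $\lambda(\mathsf c)\ne\i\lambda(\mathsf c)$, these are two distinct rays, symmetric about the barycentric line. Their sum $\lambda(\mathsf c)+\i\lambda(\mathsf c)$ lies on the barycentric ray; this also follows directly from Lemma~\ref{lem:symmetric-directions-normality}. Next, $b\in\mathcal N'$, so $\lambda(j(b))$ contributes the ray through $\operatorname{diag}(1,0,-1)$, since $\chi(b)=0$. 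This gives only three rays in a plane, and $\fa$ for $\SL_3$ is two-dimensional. So the convex cone $\mathcal C_0$ spanned by $\lambda(\mathsf c)$ and $\i\lambda(\mathsf c)$ is already a two-dimensional cone whose interior contains the barycentric ray. These rays are independent of $r$ and lie in $\L_{N'_r}$ for every $r$. The subgroup $N'_r$ is Zariski dense, so by Benoist its limit cone is convex, and hence $\mathcal C_0\subset\L_{N'_r}$. Choose $U\subset\inte\mathcal C_0$ before Step 1; then $U\subset\inte\L_{N'_r}$ for all $r$, with interior taken in $\fa$.

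\emph{Main obstacle.} The delicate point is the uniformity in Step 1. It requires $v_{\infty,\varphi}$ to depend continuously on $\varphi$ and $v_{\infty,\varphi_0}$ to be exactly barycentric. Both follow from symmetry and uniqueness, combined with the convex-differentiability argument already used in Lemma~\ref{lem:uniform-maximizer-convergence}. The order of quantifiers matters and must be respected: first $U$ from $\mathsf c$, then $\mathcal U$ from continuity at $\varphi_0$, then $r_0$ from uniform convergence.
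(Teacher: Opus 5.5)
Your proposal is correct and follows the paper's proof essentially verbatim. You choose $U$ inside the cone spanned by $\lambda(\mathsf c)$ and $\lambda(\mathsf c^{-1})=\i\lambda(\mathsf c)$, which lies in $\L_{N'_r}$ for every $r$ because $c\in\mathcal N'$ and $N'_r$ is Zariski dense. You then place $v_{\infty,\varphi_0}$ on the barycentric ray by $\i$-symmetry and uniqueness, get continuity of $\varphi\mapsto v_{\infty,\varphi}$ from \eqref{dvv}, and conclude with Lemma~\ref{lem:uniform-maximizer-convergence}, fixing $U$, then $\mathcal U$, then $r_0$ in that order, exactly as the paper does. The asides about $b$ and Lemma~\ref{lem:symmetric-directions-normality} are unnecessary (the barycentricity of $\lambda(\mathsf c)+\i\lambda(\mathsf c)$ is immediate from $\i$-invariance), but they do no harm.
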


\begin{proof}
Let $U$ be a conic neighborhood of
the barycentric ray whose closure is contained in the interior of the convex cone spanned by 
 $\la({\mathsf c})$ and $\la({\mathsf c}^{-1})$.
Since $c,c^{-1}\in\mathcal N'$,  the limit cone $\L_{N_r'}$ contains the convex cone spanned  by
 $\la({\mathsf c})$ and $\la({\mathsf c}^{-1})$.
Therefore
$U\subset \operatorname{int}\L_{N_r'}.$

It remains to show that $v_{r,\varphi}\in U$ for $\varphi$ near
$\varphi_0$ and $r$ large. Since $\varphi_0$ and $\psi_{\Ga_\infty}$ are both $\i$-invariant, it follows from the uniqueness that $v_{\infty,\varphi_0}$ lies on the barycentric ray.
 The map
$$
    \varphi\mapsto v_{\infty,\varphi}
$$
is continuous near $\varphi_0$. Indeed, since $\varphi\mapsto\delta_{\Gamma_\infty,\varphi}$ is real analytic near
$\varphi_0$, and since $v_{\infty, \varphi}$ satisfies
$$
    D_\varphi\delta_{\Gamma_\infty,\varphi}(\eta)
    =
    -\delta_{\Gamma_\infty,\varphi}\,\eta(v_{\infty,\varphi})
    \qquad(\eta\in\fa^*),
$$
the vector $v_{\infty, \varphi}$ depends continuously on $\varphi$.
Therefore, after shrinking
$\mathcal U$, we have
$$
    v_{\infty,\varphi}\in U
    \qquad(\varphi\in\mathcal U).
$$
By the uniform convergence $v_{r,\varphi}\to v_{\infty,\varphi}$ (\Cref{lem:uniform-maximizer-convergence}), after
increasing $r_0$ we obtain
$v_{r,\varphi}\in U$
 for all $r\ge r_0$ and $\varphi\in\mathcal U$.
This proves the lemma.
\end{proof}

\GrowthIndicatorGap*

The rest of this section is devoted to its proof.
Choose $\mathsf c$ as in Lemma \ref{lem:maximizer-near-barycenter}. Thus there
exist a neighborhood $U$ of the barycentric ray, a neighborhood
$\mathcal U$ of $\varphi_0$, and $r_0\ge1$ such that, for all $r\ge r_0$,
$$
    U\subset \operatorname{int}\L_{N'_r}.
$$

Choose a nonsymmetric form $
    \varphi=s_1\omega_1+s_2\omega_2\in \mathcal U$ with
    $s_1,s_2>0$ and $s_1\ne s_2$.
By the proof of \Cref{thm:omega1-gap},  after increasing $r$ if necessary,
we have
$$
    \delta_{N'_r,\varphi}
    <
    \delta_{\Ga_r,\varphi}.
$$
Fix such an $r$, and set 
$$
    \Gamma:=\Ga_r,
    \quad\text{and}\quad
    N:=N'_r.
$$
For $r$ sufficiently large, $\Gamma$ is Zariski dense and Borel-Anosov,
and $N\lhd\Gamma$ is cocyclic and Zariski dense.
Let $v=v_\varphi$ be the unique  maximizing vector for
$\delta_{\Gamma,\varphi}$, normalized by $\varphi(v)=1$. By
Lemma \ref{lem:maximizer-near-barycenter}, we have
$$
    v\in U\subset \operatorname{int}\L_N.
$$

We claim that
\be\label{dif}
    \psi_N(v)\ne \psi_\Gamma(v).\ee 
Indeed, if $\psi_N(v)=\psi_\Gamma(v)$, then since $v\in \L_N$ and
$\varphi(v)=1$, by Lemma \ref{KMO_tent} we would have
$$
    \delta_{N,\varphi}
    \ge
    \psi_N(v)
    =
    \psi_\Gamma(v)
    =
    \delta_{\Gamma,\varphi},
$$
contradicting $\delta_{N,\varphi}<\delta_{\Gamma,\varphi}$. This proves \eqref{dif}.

We now show that one may further arrange that
$$
    \mathcal L_N=\mathcal L_\Gamma .
$$
For this, we use the following extension of
\cite[Theorem A.1]{DeyHurtado25}, where the subgroup $\Delta$ was
assumed to be cyclic. The same argument applies to a finitely generated
free factor once the required ping-pong configuration is arranged. 
\begin{lemma}\label{lem:same-cone-finite-index}
Let $\Gamma<\SL_3(\mathbb R)$ be a Zariski dense Borel-Anosov subgroup. Suppose that
$\Gamma$ splits as a free product
$$
    \Gamma=\Delta*\langle \mathsf c\rangle,
$$
where $\Delta<\Gamma$ is a finitely generated subgroup and
$\mathsf c\in\Gamma$ has infinite order with $\la(\mathsf c)\ne \la (\mathsf c^{-1})$.
 Suppose that
    \begin{enumerate}
        \item $\lambda(\mathsf c)$ lies outside the smallest closed convex cone in $\fa^+$ containing $\mathcal L_{\Delta}$;
        \item for all finite index subgroups $\Delta'$ of $\Delta$, the subgroup $\langle \Delta', \mathsf c\rangle$ is Zariski dense.
    \end{enumerate} 
    Then there exists a finite index subgroup $\Delta'$ of $\Delta$ such that the limit cone $\mathcal L_{\Delta'* \langle\mathsf c\rangle}$ is equal to the convex cone bounded by the two rays containing $\la(\mathsf c)$ and $\la (\mathsf c^{-1})$.
 
\end{lemma}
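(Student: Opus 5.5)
The approach is to prove the two inclusions $\mathcal C_{\mathsf c}\subset\mathcal L_{\Delta'*\langle\mathsf c\rangle}$ and $\mathcal L_{\Delta'*\langle\mathsf c\rangle}\subset\mathcal C_{\mathsf c}$ separately, where $\mathcal C_{\mathsf c}$ denotes the closed convex cone bounded by the rays through $\lambda(\mathsf c)$ and $\lambda(\mathsf c^{-1})=\i(\lambda(\mathsf c))$. Since $\fa^+$ is two-dimensional for $\SL_3(\mathbb R)$, every closed convex cone in $\fa^+$ is an angular sector. The limit cone $\mathcal L_\Delta$ is $\i$-invariant, and it is contained in $\operatorname{int}\fa^+$ because $\Delta<\Gamma$ is Borel-Anosov. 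Hypothesis (1), together with $\i$-invariance, then forces $\mathcal L_\Delta\subset\operatorname{int}\mathcal C_{\mathsf c}$. By compactness of directions there is a margin $\epsilon>0$: every $v\in\fa^+$ whose direction is $\epsilon$-close to $\mathcal L_\Delta$ satisfies $\operatorname{dist}(v,\partial\mathcal C_{\mathsf c})\ge\epsilon\|v\|$.

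\emph{Lower inclusion.} For any finite index $\Delta'<\Delta$, the group $\langle\Delta',\mathsf c\rangle$ is Zariski dense by (2). By Benoist's theorem its limit cone is convex. It contains $\lambda(\mathsf c)$ and $\lambda(\mathsf c^{-1})$ by Theorem \ref{thm:Benoist-Jordan-limit-cone}, and hence it contains $\mathcal C_{\mathsf c}$.

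\emph{Upper inclusion.} By Theorem \ref{thm:Benoist-Jordan-limit-cone} it suffices to show $\lambda(w)\in\mathcal C_{\mathsf c}$ for every $w\in\Delta'*\langle\mathsf c\rangle$. Elements conjugate into $\Delta'$ have Jordan projection in $\mathcal L_\Delta\subset\mathcal C_{\mathsf c}$, and elements conjugate into $\langle \mathsf c\rangle$ have Jordan projection on the boundary rays. So I may take $w=d_1\mathsf c^{q_1}\cdots d_k\mathsf c^{q_k}$ cyclically reduced, with $d_i\in\Delta'-\{e\}$ and $q_i\ne0$. Since $\Gamma=\Delta*\langle\mathsf c\rangle$ is hyperbolic and $\Delta$ is quasiconvex, the word $w^n$ obtained by concatenating geodesic representatives of the blocks is a uniform quasigeodesic in $\Gamma$. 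Applying Proposition \ref{Grprod} iteratively, with a constant $C$ depending only on the splitting, yields
\[
\Bigl\|\mu(w^n)-n\sum_{i}\mu(d_i)-n\sum_i\mu(\mathsf c^{q_i})\Bigr\|\le 2nkL+O(1).
\]
Because $\mathsf c$ is loxodromic, $\mu(\mathsf c^{q})$ lies within a bounded distance $B$ of the ray through $\lambda(\mathsf c^{\pm1})$, i.e.\ of $\partial\mathcal C_{\mathsf c}$.

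Now I choose $\Delta'$, using residual finiteness of the linear group $\Delta$, so that every $d\in\Delta'-\{e\}$ has $\|\mu(d)\|\ge R$ with $R$ huge, and also has direction $\epsilon$-close to $\mathcal L_\Delta$. The latter holds for all but finitely many elements of $\Delta$, since $\mu(\Delta)$ stays within bounded distance of $\mathcal L_\Delta$. Then each $\mu(d_i)$ lies at distance at least $\epsilon R$ inside $\mathcal C_{\mathsf c}$. I charge to $d_i$ the adjacent error $2L+B$, which is at most $\epsilon R$ once $R$ is large. Since $\mathcal C_{\mathsf c}$ is a convex cone, a sum of vectors each of which is at least $\epsilon R$ deep inside, with perturbations each smaller than that depth, remains inside. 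Hence $\mu(w^n)\in\mathcal C_{\mathsf c}$ up to an $O(1)$ term, and dividing by $n$ gives $\lambda(w)\in\mathcal C_{\mathsf c}$.

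The main obstacle is the uniform coarse additivity in the third step. Proposition \ref{Grprod} needs the concatenation of block geodesics to be a $C$-quasigeodesic with $C$ independent of $\Delta'$ and of $k$. I would first try to derive this from the free-product structure: a normal form in $\Delta*\langle\mathsf c\rangle$ has word length equal to the sum of block lengths up to bounded error per junction, and then word lengths are converted to Cartan data via Proposition \ref{Grprod}. If that route fails, the fallback is a direct ping-pong estimate in the two fundamental representations $\wedge^j\mathbb R^3$, as in Lemma \ref{lem:free-product-estimates}. That would use transversality of the attracting and repelling flags of $\mathsf c$ to the limit set of $\Delta$, which the Borel-Anosov free product provides.
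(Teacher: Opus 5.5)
Your proof is correct. Its skeleton matches the paper's: you use $\i$-invariance to place $\mathcal L_\Delta-\{0\}$ in $\operatorname{int}\mathcal C_{\mathsf c}$, get the lower inclusion from convexity (Zariski density), reduce to cyclically reduced words and their powers $w^n$, and pass to a finite-index $\Delta'$ whose nontrivial elements are long and point near $\mathcal L_\Delta$.

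The genuine difference is where the uniform additivity estimate comes from.
\begin{itemize}
\item The paper first builds a ping-pong configuration in the flag variety. It takes neighborhoods $X,Y$ with $\mathsf c^nY\subset\operatorname{int}X$, and uses residual finiteness to get $gX\subset\operatorname{int}Y$ for all $g\in\Delta'-\{e\}$. It then invokes the Dey--Hurtado Schottky estimate $\|\mu(w)-\sum\mu(g_i)-\sum\mu(\mathsf c^{n_i})\|\le C_0k$. It finishes with the ratio $\alpha_1/\alpha_2$, treating $w$ and $w^{-1}$ separately for the two boundary rays.
\item You instead stay inside the ambient Borel--Anosov group $\Gamma$ and apply Proposition \ref{Grprod} junction by junction.
\end{itemize}

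Your first route works as stated, so you do not need the fallback. Take the generating set $S_\Delta\cup\{\mathsf c^{\pm1}\}$. The retractions of $\Gamma$ onto $\Delta$ and onto $\langle\mathsf c\rangle$ show that each syllable has the same length in $\Gamma$ as in its factor. The length of a normal form is exactly the sum of its syllable lengths, so every Gromov product at a junction vanishes. Hence Proposition \ref{Grprod} applies with $C=0$. Telescoping gives a total error of at most $(2kn-1)L$, with no extra $O(1)$. Since $L$ depends only on $\Gamma$, it is automatically uniform in $\Delta'$ and $k$.

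What your route buys is a shorter argument with no flag-variety dynamics: residual finiteness is needed only to discard short or badly directed elements. Your distance-to-boundary margin also handles both boundary rays at once. The paper's estimate, by contrast, depends only on the ping-pong data of $\Delta'$ and $\langle\mathsf c\rangle$, the mechanism behind combination theorems. Yours leans on the hypothesis, available here, that the whole free product is Borel--Anosov.

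One minor correction: $\Delta$ need not be Zariski dense, so do not cite Benoist's bounded-distance property for $\mu(\Delta)$. What you actually need is that all but finitely many $d\in\Delta$ have direction close to $\mathcal L_\Delta$. This follows directly from the definition of $\mathcal L_\Delta$ as the asymptotic cone of $\mu(\Delta)$, together with discreteness.
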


\begin{proof} Let $\mathcal C_{\mathsf c}$ denote the convex cone spanned by two rays
$\la(\cc)$ and $\la(\cc^{-1})$. Since $\L_\Delta$ is $\i$-invariant, so is the smallest convex cone containing it. Since $\i (\la (\cc))=\la(\cc^{-1})$,
 the first hypothesis implies that $\la(\cc^{-1})$ lies outside the smallest convex cone containing $\L_\Delta$. Therefore we have  $\L_\Delta-\{0\}\subset \inte \mathcal C_{\mathsf c}$. 

We first arrange a ping-pong configuration.  Since $\Gamma$ is
Borel-Anosov, its boundary map to the full flag variety $\mathcal F$ is antipodal.
 Since
$\Delta$ is a free factor of the word-hyperbolic group $\Gamma$, it is
quasiconvex in $\Gamma$. Hence $\Delta$ is again Borel-Anosov, with
limit set $\Lambda_\Delta\subset \cal F$. 

Let $\xi_{\cc}\in \mathcal F$ and $\xi_{\cc^{-1}}\in \mathcal F$ denote the attracting fixed points of $\cc$ and $\cc^{-1}$ respectively.
 Choose compact neighborhoods $Y$ of $\Lambda_\Delta$ and
$X$ of
$    \{\xi_{\cc} ,\xi_{\cc^{-1}}\}\cup
    \bigcup_{n\ne0}\mathsf c^n\Lambda_\Delta$
so that every flag in $X$ is antipodal to every flag in $Y$, and so that for all $n\in \Z-\{0\}$,
$$
    \mathsf c^nY\subset\operatorname{int}X
$$
By the convergence dynamics of the Anosov subgroup $\Delta$, all but
finitely many elements $g\in\Delta$ satisfy
$$
    gX\subset\operatorname{int}Y.
$$
Since $\Delta$ is finitely generated and linear, it is residually finite.
Passing to a finite-index subgroup $\Delta'<\Delta$ which avoids this
finite exceptional set, we obtain
$$
    gX\subset\operatorname{int}Y
    \qquad \text{for all $g\in\Delta'-\{e\}$.}
$$
Thus $\Delta'$ and $\langle\mathsf c\rangle$ play ping-pong with the
sets $X$ and $Y$.

By \cite[Corollary 3.8]{DeyHurtado25} (see also
\cite[Section 4.1]{Benoist97}), there exists a constant $C_0>0$ such
that every reduced word
$$
    w=g_1\mathsf c^{n_1}\cdots g_k\mathsf c^{n_k},
    \qquad
    g_i\in\Delta'-\{e\},\quad n_i\in\mathbb Z-\{0\},
$$
satisfies
$$
    \left\|
    \mu(w)
    -
    \sum_{i=1}^k\mu(g_i)
    -
    \sum_{i=1}^k\mu(\mathsf c^{n_i})
    \right\|
    \le C_0k.
$$
Since
$$
    \|\mu(\mathsf c^n)-\lambda(\mathsf c^n)\|=O(1)
    \qquad(n\in\mathbb Z),
$$
we may enlarge $C_0$ and write
$$
    \left\|
    \mu(w)
    -
    \sum_{i=1}^k\mu(g_i)
    -
    \sum_{i=1}^k |n_i|\lambda(\mathsf c^{\operatorname{sgn}n_i})
    \right\|
    \le C_0k.
$$

If $w$ is cyclically reduced, applying this estimate to $w^m$, dividing
by $m$, and letting $m\to\infty$, gives
\be\label{give}
    \left\|
    \lambda(w)
    -
    \sum_{i=1}^k\mu(g_i)
    -
    \sum_{i=1}^k |n_i|\lambda(\mathsf c^{\operatorname{sgn}n_i})
    \right\|
    \le C_0k.
\ee

Indeed, when $w$ is cyclically reduced, the word $w^m$ is obtained by
concatenating $m$ copies of $w$ without cancellation, and $\lambda(w)=\lim_{m\to\infty}\frac1m\mu(w^m)$.

If $w$ is not cyclically reduced, then $w$ is conjugate either to a
cyclically reduced word or to an element of one of the free factors. Since
the Jordan projection is invariant under conjugation, replacing $w$ by a
conjugate does not change $\lambda(w)$. Thus the preceding estimate
applies after conjugating $w$ to cyclically reduced form; if $w$ is
conjugate into $\Delta'$ or into $\langle \mathsf c\rangle$, the desired
cone containment follows directly from $ \mathcal L_{\Delta'}\subset \mathcal C_{\mathsf c}$ or $
    \lambda(\mathsf c^{\pm1})\in\mathcal C_{\mathsf c}$.

We now show that every Jordan projection $\la(w)$  lies in $\mathcal C_{\mathsf c}$.
For $v\in\operatorname{int}\mathfrak a^+$, set
$$
    R(v):=\frac{\alpha_1(v)}{\alpha_2(v)}.
$$
Let
$$
    R_+:=
    \max\{R(\lambda(\mathsf c)),R(\lambda(\mathsf c^{-1}))\},
    \qquad
    R_-:=
    \min\{R(\lambda(\mathsf c)),R(\lambda(\mathsf c^{-1}))\}.
$$
Then $\mathcal C_{\mathsf c}$ is precisely the closed cone of vectors
$v\in\mathfrak a^+$ satisfying
$$
    R_-\le R(v)\le R_+.
$$

Since $$\mathcal L_\Delta-\{0\}
    \subset
    \operatorname{int}\mathcal C_{\mathsf c},$$
the ratios $R(v)$, for $v\in\mathcal L_\Delta-\{0\}$, are uniformly
bounded away from $R_+$ and $R_-$. Moreover, since $\mathcal L_\Delta$  is the asymptotic cone of $\mu(\Delta)$, after passing to a deeper
finite-index subgroup $\Delta'$, if necessary, we may therefore assume
that, for every non-trivial $g\in\Delta'$,
\be\label{muc} 
    \alpha_2(\mu(g))>C_0
\quad\text{ and }\quad 
    \frac{\alpha_1(\mu(g))+C_0}
         {\alpha_2(\mu(g))-C_0}
    <
    R_+.
\ee 

So
$$
D_w:=
\sum_{i=1}^k\bigl(\alpha_2(\mu(g_i))-C_0\bigr)
+
\sum_{i=1}^k |n_i|
\alpha_2\bigl(\lambda(\mathsf c^{\operatorname{sgn} n_i})\bigr) >0.
$$
 Since
 $ {\alpha_1(\lambda(\mathsf c^{\pm1}))}\le R_+
         {\alpha_2(\lambda(\mathsf c^{\pm1}))}$ and
 $ {\alpha_1(\mu(g))+C_0}< R_+
         {\alpha_2(\mu(g))-C_0}$ for all $g\in \Delta'$,
the estimate for $\lambda(w)$ in \eqref{give} gives
$$
    \alpha_1(\lambda(w))\le R_+D_w,
    \qquad
    \alpha_2(\lambda(w))\ge D_w.
$$
Hence
$$
    R(\lambda(w))\le R_+.
$$

Applying the same argument to $w^{-1}$, and using
$\lambda(w^{-1})=\op{i}(\lambda(w))$
gives the lower bound
$$
    R(\lambda(w))\ge R_-.
$$
Therefore
$$
    \lambda(w)\in\mathcal C_{\mathsf c}
    \qquad
    \text{for all $w\in\Delta'*\langle\mathsf c\rangle$}.
$$

By hypothesis, $\Delta'*\langle\mathsf c\rangle$ is Zariski dense in
$\SL_3(\mathbb R)$. Hence, its limit cone is the
smallest closed convex cone containing the Jordan projections of its
elements. Since all these Jordan projections lie in $\mathcal C_{\mathsf c}$,
we get
$$
    \mathcal L_{\Delta'*\langle\mathsf c\rangle}
    \subset
    \mathcal C_{\mathsf c}.
$$
The reverse inclusion holds because $\mathsf c$ and $\mathsf c^{-1}$
belong to $\Delta'*\langle\mathsf c\rangle$. 
This proves the lemma.
\end{proof}

In the construction above, we may additionally require that $\lambda(\mathsf c)$ lies outside $\mathcal L_{\rho_\kappa(\langle a,b\rangle)}$ and that for all $m\in\mathbb N$, $\langle \rho_{\kappa}(\langle a^m,b^m\rangle), \mathsf c \rangle$ is Zariski dense. In this case, \Cref{lem:same-cone-finite-index} implies that there exists $m\in\mathbb N$ such that $\mathcal L_{\widehat\rho_{\kappa,r}(\langle a^m,b^m,c\rangle)}$ is equal to the convex cone $\mathcal L$ bounded by the two rays containing $\lambda(\mathsf c)$ and $\lambda(\mathsf c^{-1})$.

We now repeat the preceding argument for the restriction $\widehat\rho_{\kappa,r}\vert_{\langle a^m,b^m,c\rangle}$. Note that every Zariski dense subgroup of $\widehat\rho_{\kappa,r}(\langle a^m,b^m,c\rangle)$ containing a nontrivial power of $\mathsf c$ also has the same limit cone $\mathcal L$. In particular, in the proof of the first part, we may take
$$
\Gamma = \widehat\rho_{\kappa,r}(\langle a^m,b^m,c\rangle)
\quad\text{and}\quad
N = \widehat\rho_{\kappa,r}(\langle\!\langle b^m,c\rangle\!\rangle),
$$
and obtain $\mathcal{L}_N=\mathcal{L}_\Gamma$. Here $\langle\!\langle b^m,c\rangle\!\rangle$ denotes the normal closure of $\langle b^m,c\rangle$ in $\langle a^m,b^m,c\rangle$. The remainder of the argument is unchanged. This completes the proof of Theorem \ref{thm:growth-indicator-gap}.

\begin{Rmk}
The appendix argument also gives a directional lower bound for growth
indicators. Let $N\lhd\Gamma$ be a Zariski dense normal subgroup of a
Borel-Anosov subgroup \(\Gamma<G\). Then
\be\label{normalg}
    \psi_N(v)\ge \frac{1}{2} \psi_\Gamma(v) \quad\text{for every $v\in\mathfrak a^+$ with $\op{i}(v)=v$.}
\ee

Indeed, the proof is a cone-localized version of the conjugacy-counting
argument in the appendix. Choose a non-trivial element $u\in N$, and write $u=w^p$ with $w\in\Gamma$ primitive and $p\ge1$. Since $N$ is
normal, all conjugates $fw^pf^{-1}$ lie in $N$. After choosing the
double-coset representatives $f$ as in the appendix, the map $f\mapsto fw^pf^{-1}$ has uniformly bounded multiplicity, and coarse
Cartan additivity gives
\[
    \mu(fw^pf^{-1})=\mu(f)+\mu(f^{-1})+O(1)
    =
    \mu(f)+\op{i}\mu(f)+O(1).
\]
Now suppose that \(v\in\mathfrak a^+\) satisfies \(\op{i}(v)=v\). If
\(\mu(f)\) lies in a sufficiently small cone around \(v\), then
\(\mu(f)+\op{i}\mu(f)\) lies in a prescribed cone around the same ray
\(\mathbb R_{>0}v\), and its norm is \(2\|\mu(f)\|+O(1)\). Hence, for every
open cone \(\mathcal C\) containing \(v\), there is a smaller open cone
\(\mathcal C'\) containing \(v\) such that, up to a polynomial factor in
\(T\),
\[
\begin{aligned}
&\#\left\{h\in N:
    \begin{array}{c}
    \mu(h)\in\mathcal C,\\
    \|\mu(h)\|\le 2T+O(1)
    \end{array}
    \right\} \\
&\hspace{35mm}\gg
    \#\left\{\gamma\in\Gamma:
    \begin{array}{c}
    \mu(\gamma)\in\mathcal C',\\
    \|\mu(\gamma)\|\le T
    \end{array}
    \right\}.
\end{aligned}
\]
Taking logarithms, dividing by \(2T\), and then letting \(T\to\infty\), gives
\[
    \tau_{\mathcal C}(N)\ge \frac12\,\tau_{\mathcal C'}(\Gamma).
\]
Here
\[
\tau_{\cal C}(\star)
=
\limsup_T\frac1T\log
\#\{g\in\star:\mu(g)\in\cal C,\ \|\mu(g)\|<T\}.
\]
Finally, taking the infimum over cones \(\mathcal C\) containing \(v\) yields
$\psi_N(v)\ge \frac12\psi_\Gamma(v)$, as claimed.
\end{Rmk} 
\newpage
\appendix
\section{Critical exponents of normal subgroups}
\begin{center}
    \textsc{Konstantinos Tsouvalas}
\end{center}

\subsection*{Critical exponents of normal subgroups} 
For an arbitrary normal subgroup, without a coamenability assumption,
we have the following lower bound.
\begin{theorem}\label{m2}
Let $G$ be a connected semisimple real algebraic group, let $\Ga<G$ be a
Borel-Anosov subgroup that is not virtually cyclic, and let $N\lhd\Ga$ be
an infinite normal subgroup. Then
\[
    \delta_N\ge\frac12\delta_\Ga.
\]
\end{theorem}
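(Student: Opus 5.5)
The plan is to count conjugates of a single element of $N$, as sketched in the closing Remark of the previous section. Since $N$ is an infinite normal subgroup of the torsion-free (after passing to finite index) hyperbolic group $\Gamma$, it contains an element $u\ne e$ of infinite order. Write $u=w^p$ with $w$ primitive, so that the centralizer $Z_\Gamma(u)=\langle w\rangle$ is infinite cyclic. For every $f\in\Gamma$ the conjugate $fuf^{-1}$ lies in $N$. The map $f\mapsto fuf^{-1}$ is constant exactly on the cosets $f\langle w\rangle$. So the goal is to choose, for each coset, a representative $f$ that is "reduced" relative to $w$, and then show that
\[
\mu(fuf^{-1})=\mu(f)+\op{i}\mu(f)+O(1),
\qquad\text{hence}\qquad
d(o,fuf^{-1}o)=2\|\mu(f)\|+O(1)\le 2d(o,fo)+O(1).
\]

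For the representatives, take $f$ to be a shortest element of $f\langle w\rangle$ in word length. Hyperbolicity then shows that a word geodesic for $f$ followed by the fixed word $u$ followed by $f^{-1}$ is a uniform quasigeodesic. One should also discard those $f$ whose geodesic ends fellow-travelling the axis of $w$ backwards; the shortest-representative choice essentially forces this. Consequently $|fuf^{-1}|=2|f|+O(1)$, and the Gromov products at the junctions are bounded. Proposition \ref{Grprod}, applied twice, gives the coarse additivity $\mu(fuf^{-1})=\mu(f)+\mu(u)+\mu(f^{-1})+O(1)$. Since $\mu(f^{-1})=\op{i}\mu(f)$ and $\|\mu(f)+\op{i}\mu(f)\|\le 2\|\mu(f)\|$, the upper bound on the norm follows. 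In fact the triangle inequality alone already gives $d(o,fuf^{-1}o)\le 2d(o,fo)+d(o,uo)$, so coarse additivity is not even needed for the upper bound. What matters is the multiplicity bound.

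For multiplicity, the map from representatives to $N$ is injective on cosets, since $fuf^{-1}=f'uf'^{-1}$ forces $f^{-1}f'\in Z(u)=\langle w\rangle$. It remains to count the cosets. The number of $f\in\Gamma$ with $d(o,fo)<T$ is $e^{(\delta_\Gamma+o(1))T}$ along a subsequence. Each coset $f\langle w\rangle$ meets this ball in at most $O(T)$ elements, because $\langle w\rangle$ is undistorted (the orbit map is a quasi-isometric embedding) and a translate of a quasi-geodesic line meets a ball of radius $T$ in $O(T)$ points. So at least $e^{(\delta_\Gamma+o(1))T}/O(T)$ distinct cosets have a representative $f$ of norm less than $T$, and by minimality the shortest representative also lies in a comparable ball $d(o,fo)<CT$. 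Here I would prefer to use the Riemannian norm directly: choose $f$ minimizing $d(o,fo)$ within the coset. This needs no hyperbolicity, since the bound $d(o,fuf^{-1}o)\le 2d(o,fo)+d(o,uo)<2T+C$ holds for any representative. Hence
\[
\#\{h\in N: d(o,ho)<2T+C\}\ \ge\ \frac{\#\{f\in\Gamma:d(o,fo)<T\}}{O(T)},
\]
and taking $\limsup\frac1{2T}\log$ gives $\delta_N\ge\frac12\delta_\Gamma$.

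The main obstacle is the counting of cosets of $\langle w\rangle$ inside a ball, namely the $O(T)$ bound. It requires that $\langle w\rangle$ be undistorted in $G/K$, uniformly over left translates. Since $\Gamma$ is Borel--Anosov, $\|\mu(w^n)\|\ge c|n|-C$, and therefore $\#\{n: d(o,fw^no)<T\}\le \#\{n: \|\mu(w^n)\|<T+d(o,fo)\}$. This gives the bound $O(T)$ when $d(o,fo)<T$, which is exactly the case we need. The hypothesis that $\Gamma$ is not virtually cyclic is used to ensure $\delta_\Gamma>0$ and that this polynomial loss is negligible. Torsion, if present, is handled by passing to a torsion-free finite-index subgroup $\Gamma_0$ and replacing $N$ by $N\cap\Gamma_0$, which is still infinite and normal in $\Gamma_0$; this changes neither critical exponent.
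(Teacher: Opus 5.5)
Your proof is correct, but the part that actually carries it (your last two paragraphs) is more elementary than the paper's argument.

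The paper proceeds in three steps:
\begin{itemize}
\item It takes a primitive element $w$ inside $N$, citing Kapovich. This is where it uses that $N$, hence $\Gamma$, is not virtually cyclic.
\item It builds double-coset representatives $Q$ for $\langle w\rangle\backslash\Gamma/\langle w\rangle$ with uniformly bounded Gromov products (Lemma \ref{double-coset}). It then uses Proposition \ref{Grprod} to get $\mu(w^rfw^s)=\mu(w^r)+\mu(f)+\mu(w^s)+O(1)$, which yields $\#\{f\in Q:\|\mu(f)\|\le T\}\gg T^{-2}\#\{\gamma\in\Gamma:\|\mu(\gamma)\|\le T\}$ (Lemma \ref{exp-coset}).
\item Only then does it apply the triangle-inequality bound $\|\mu(fwf^{-1})\|\le 2\|\mu(f)\|+\|\mu(w)\|$.
\end{itemize}

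Your route differs at each of these points:
\begin{itemize}
\item You conjugate $u=w^p\in N$ with $w$ primitive in $\Gamma$ but not necessarily in $N$. This removes the need for a primitive element of $N$.
\item You parametrize the conjugates by left cosets of $Z_\Gamma(u)=\langle w\rangle$ rather than double cosets.
\item You bound $\#\bigl(f\langle w\rangle\cap\{\gamma:d(o,\gamma o)<T\}\bigr)=O(T)$ directly from the linear lower bound $\|\mu(w^n)\|\ge c|n|-C$ and the triangle inequality.
\end{itemize}
This loses only a factor $T$ instead of $T^2$ and uses no coarse additivity at all.

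In fact, any $f$ in the ball represents its coset equally well, since $d(o,fuf^{-1}o)\le 2d(o,fo)+d(o,uo)$ holds for every $f$. So neither the shortest nor the $d$-minimizing representative is needed, and your first paragraph (shortest representatives, quasigeodesic concatenation) can be dropped.

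The non-virtually-cyclic hypothesis is also superfluous in your argument, not something needed to make the polynomial loss negligible: if $\Gamma$ is virtually cyclic, then $\delta_\Gamma=0$ and the claim is trivial. What you really use is only that some nontrivial $u\in N$ has a centralizer with subexponential orbit growth.

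What the paper's heavier decomposition buys is Cartan-level control for the representatives, namely $\mu(fw^pf^{-1})=\mu(f)+\op{i}\mu(f)+O(1)$. That is what the cone-localized bound \eqref{normalg} in the closing remark requires. A norm-only count like yours proves Theorem \ref{m2} but not that directional refinement.
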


This subsection is devoted to the proof. Since the critical exponent is
unchanged upon passing to a finite-index subgroup, we assume for the rest
of the appendix that $\Gamma$ is torsion-free. We fix a word metric $|\cdot|$ on $\Gamma$ and for $\gamma_1,\gamma_2\in \Gamma$, we denote by $$(\gamma_1\cdot \gamma_2)=|\gamma_1|+|\gamma_2|-|\gamma_1^{-1}\gamma_2|$$ the Gromov product of $\gamma_1,\gamma_2$.

\begin{lemma}\label{double-coset}
Let
$w\in \Gamma$ be a primitive element. There exist $M>0$ and a set
$Q$ of double coset representatives for $\langle w\rangle$ in $\Gamma$ such that, for
any $h\in Q$ non-trivial and any $r,s\in \mathbb Z$,
$$
    \big||h^{\pm 1}w^r|-|h|-|w^r|\big|\leq M \text{ and }
    \big||w^s h^{\pm 1}w^r|-|w^s h^{\pm 1}|-|w^r|\big|\leq M.
$$
\end{lemma}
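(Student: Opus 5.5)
The idea is to choose, in each double coset $\langle w\rangle h\langle w\rangle$, a representative of minimal word length, and then use hyperbolicity of $\Gamma$ together with quasiconvexity of the cyclic subgroup $\langle w\rangle$ to show that a shortest representative is coarsely orthogonal to the axis of $w$ on both sides. Since $\Gamma$ is Borel-Anosov, it is word-hyperbolic, and it is torsion-free. The element $w$ is primitive, so $\langle w\rangle$ is a maximal cyclic subgroup. This subgroup is quasiconvex: $n\mapsto w^n$ is a quasi-geodesic with constants depending only on $w$. By Morse stability, there is a constant $\kappa$ such that $\{w^n\}$ is $\kappa$-Hausdorff close to a bi-infinite geodesic $\ell_w$.

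\textbf{Step 1 (choice of $Q$).} For each double coset $D=\langle w\rangle h\langle w\rangle$, choose $h_D\in D$ minimizing $|\cdot|$. Let $Q$ be the set of all these $h_D$, with $e$ representing the trivial coset.

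\textbf{Step 2 (right multiplication).} Take $h\in Q$ and compare $(h^{-1}\cdot w^r)$. Minimality gives $|hw^{r}|\ge |h|$ for all $r$, and $|w^{s}h|\ge|h|$ for all $s$. So $h$ is a shortest element in its coset $h\langle w\rangle$, which means $e$ is, up to bounded error, a nearest-point projection of $h^{-1}$ to the orbit $\langle w\rangle$. The standard lemma on projections to quasiconvex sets in a $\delta$-hyperbolic space then says that a geodesic from $h^{-1}$ to any $w^r$ passes within a uniform distance $M_0=M_0(\delta,\kappa)$ of the projection point $e$. Equivalently, $|h^{-1}w^r|\ge |h|+|w^r|-2M_0$. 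The reverse inequality is the triangle inequality, which gives the bound for $h^{-1}w^r$. For $hw^r$, apply the same reasoning to $h^{-1}\in Q^{-1}$. This is allowed because $h^{-1}$ is minimal in its own double coset $\langle w\rangle h^{-1}\langle w\rangle$, as inversion preserves length and maps double cosets to double cosets.

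\textbf{Step 3 (two-sided estimate).} The second inequality, $\big||w^sh^{\pm1}w^r|-|w^sh^{\pm1}|-|w^r|\big|\le M$, is where I expect the main obstacle. Projecting to $\langle w\rangle$ from the left and right separately is not enough by itself. We also need $w^sh$ to project onto $\langle w\rangle$ near $w^s$ and $e$ in a coarsely consistent way. If $h$ is long compared with a constant $K$, the projections of $\ell_w$ and $h\ell_w$ onto each other have bounded diameter. This follows from the fact that $\langle w\rangle$ is maximal cyclic, hence malnormal in torsion-free hyperbolic $\Gamma$: the fellow-travelling time of $\ell_w$ and $h\ell_w$ for $h\notin\langle w\rangle$ is bounded uniformly in $h$. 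Minimality of $h$ then places the short connecting segment between $\ell_w$ and $h\ell_w$ near $e$ (respectively near $h$). The path $w^s\to w^sh\to w^shw^r$ is therefore a concatenation of three geodesics with bounded backtracking at the junctions, so it is a uniform quasi-geodesic. This gives both estimates with $M$ depending on $\delta$, $\kappa$, and the malnormality constant. The finitely many $h\in Q$ with $|h|\le K$ are handled by enlarging $M$, using the Step 2 estimate plus $|h|\le K$ applied to each of them.
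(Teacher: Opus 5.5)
Your Step 2 is fine, with one slip. A geodesic from $h^{-1}$ to $w^r$ passing near $e$ controls $|hw^r|$, not $|h^{-1}w^r|$. But minimality in the double coset gives both $|hw^j|\ge|h|$ and $|w^jh|\ge|h|$, so both $h$ and $h^{-1}$ project near $e$, and both cases of the first inequality follow.

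The genuine gap is the last sentence of Step 3. For the finitely many $h\in Q$ with $|h|\le K$, you derive the second inequality from the Step 2 estimate and $|h|\le K$. That cannot work. For bounded $|h|$, the second inequality amounts to $|w^sh^{\pm1}w^r|\ge|w^s|+|w^r|-C$ for all $r,s\in\mathbb Z$, a two-sided statement about which Step 2 says nothing. The implication is in fact false without primitivity. Take $w=u^2$: the double coset $\langle w\rangle u\langle w\rangle$ consists of the odd powers of $u$. Its shortest element $h$ satisfies your Step 2 bounds and is a single element of bounded length. But $h$ commutes with $w$, so $|w^shw^{-s}|=|h|$ while $|w^sh|+|w^{-s}|\to\infty$. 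So the short elements of $Q$ are exactly where primitivity of $w$ is indispensable, and your argument does not use it there. Conversely, for long $h$ malnormality is not needed. Step 2 already bounds the junction defects $|w^s|+|h|-|w^sh|$ and $|h|+|w^r|-|hw^r|$. The four-point condition then gives three-term additivity once $|h|$ exceeds a constant depending only on these bounds and $\delta$.

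The repair is to apply the malnormality input you already stated to every $h\in Q-\{e\}$, with no case split.
\begin{itemize}
\item Suppose $w^{a_1},\dots,w^{a_k}$ (distinct $a_i$) lie within $R$ of points $hw^{b_i}$ and $k>\#B_w(e,R)$. Then two of the elements $w^{-a_i}hw^{b_i}$ coincide, giving $h^{-1}w^mh=w^n$ with $m\ne0$. In the torsion-free hyperbolic group $\Gamma$, primitivity then forces $h\in\langle w\rangle$.
\item Hence the nearest-point projection of $h\langle w\rangle$ to $\langle w\rangle$ has diameter bounded independently of $h$.
\item That projection contains a point near the projection of $h$, which is near $e$ by minimality. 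So every $hw^r$ projects uniformly near $e$.
\item The projection lemma then gives $|w^shw^r|=d(w^{-s},hw^r)\ge|w^s|+|hw^r|-C$.
\item Combined with Step 2 and $|w^sh|\le|w^s|+|h|$, this yields the second inequality for $h$, and symmetrically for $h^{-1}$.
\end{itemize}

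This is the coarse-geometric counterpart of the paper's argument. The paper picks representatives from a compact fundamental domain for $\langle w\rangle$ acting on $(\Gamma\cup\partial\Gamma)-\{w^+,w^-\}$. It then uses that $h\{w^+,w^-\}$ stays uniformly away from $\{w^+,w^-\}$ for nontrivial $h\in Q$, which is precisely where it invokes primitivity.
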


\begin{proof}
Since $\langle w\rangle$ is a quasiconvex subgroup of $\Gamma$, it acts cocompactly on
$$
    \mathcal X_w
    :=
    \big(\Gamma\cup \partial\Gamma\big)
    \smallsetminus
    \{w^+,w^-\},
$$
where $w^+$ and $w^-$ are the attracting and repelling fixed points of $w$ in
$\partial\Gamma$. Choose a compact fundamental domain
$\mathcal F_1\subset \mathcal X_w$ for this action, so that
$ \mathcal X_w=\langle w\rangle \mathcal F_1$.
In particular, the points of $\mathcal F_1$ are uniformly separated from $w^+$ and
$w^-$ in the visual metric. Let $T_1:=\Gamma\cap \mathcal F_1$.
For each $t\in T_1$, write $t^{-1}=w^{m(t)}\overline t $ for some $m(t)\in\mathbb Z$ and some $\overline t\in T_1$, 
and set $ T:=\{\overline t:t\in T_1\}$.
Then any element of $T$, as well as its inverse up to left multiplication by a power of
$w$, remains uniformly away from $\{w^+,w^-\}$ in the visual metric. Choose from $T$ one
representative for each double coset of $\langle w\rangle$ in
$\Gamma - \langle w\rangle$, and denote the resulting set by $Q$. By construction, for any $h\in Q$, the points represented by $h$ and $h^{-1}$ are
uniformly away from $w^+$ and $w^-$. Hence the Gromov products    $(h^{\mp1} \cdot w^r)$, $h\in Q,\ r\in\mathbb Z$,
are uniformly bounded.  This proves the first estimate.
The same compactness argument shows that since for any $h\in Q$ non-trivial, $h\{w^{+},w^{-}\}$ is uniformly away from $\{w^{+},w^{-}\}$ in $\Gamma \cup \partial \Gamma$, the Gromov products
$\big((w^s h^{\pm1})^{-1}\cdot w^r\big)$, $ h\in Q,\ r,s\in\mathbb Z$,
are uniformly bounded. Equivalently, $\big||w^s h^{\pm 1}w^r|-|w^s h^{\pm 1}|-|w^r|\big|$
is uniformly bounded. Enlarging the constant if necessary completes the proof.
\end{proof}

\begin{lemma}\label{exp-coset}
 For any primitive element $w\in\Gamma$, there exist
a set $Q\subset\Gamma$ of double coset representatives for $\langle w\rangle$ in
$\Gamma$ and a constant $c>0$ such that, for all $T\geq 1$,
\begin{equation}\label{exponent-ineq1}
    \#\{g\in Q:\|\mu(g)\|\leq T\}
    \geq
    {c}{T^{-2}}
    \#\{g\in\Gamma:\|\mu(g)\|\leq T\}.
\end{equation}

\end{lemma}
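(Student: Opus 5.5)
We take $Q$ to be the set of double coset representatives from Lemma \ref{double-coset}, together with the constant $M$. Every $g\in\Gamma$ can be written as $g=w^s h w^r$ with $h\in Q$ and $r,s\in\mathbb Z$. The plan is to show that this decomposition costs at most a polynomial factor in the count. Concretely, we bound $\#\{g:\|\mu(g)\|\le T\}$ by the number of triples $(s,h,r)$ in which $\|\mu(h)\|$, $|r|$ and $|s|$ are all controlled by $T$. The number of admissible pairs $(r,s)$ is $O(T^2)$, which accounts for the factor $T^{-2}$.

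\textbf{Step 1: additivity.} Lemma \ref{double-coset} says that the concatenations $h\cdot w^r$ and $(w^sh)\cdot w^r$ are $M$-quasi-geodesic, in the sense that word lengths add up to $M$. The same argument gives $|w^sh|\ge |w^s|+|h|-M'$: in Lemma \ref{double-coset} we choose $h$ with $h^{-1}$ uniformly far from $w^\pm$, so the Gromov product $(w^{-s}\cdot h)$ is bounded. Combining these,
\[
|w^shw^r|\ge |w^s|+|h|+|w^r|-M''.
\]
We then apply Proposition \ref{Grprod} twice, with $C=M''$. This gives
\[
\mu(w^shw^r)=\mu(w^s)+\mu(h)+\mu(w^r)+O(1),
\]
with the error uniform over $h\in Q$ and $r,s\in\mathbb Z$. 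All three Cartan projections lie in $\fa^+$, and $\fa^+$ is an acute cone for the Killing norm. Hence $\|x+y+z\|\ge c_1\max(\|x\|,\|y\|,\|z\|)$ for $x,y,z\in\fa^+$. Since $\Gamma$ is Anosov, $\|\mu(w^r)\|\ge c_2|r|-c_3$. Consequently, $\|\mu(g)\|\le T$ forces $\|\mu(h)\|\le C_4T+C_5$ and $|r|,|s|\le C_4T+C_5$.

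\textbf{Step 2: counting.} The map $(s,h,r)\mapsto w^shw^r$ is surjective onto $\Gamma$. Step 1 then gives
\[
\#\{g\in\Gamma:\|\mu(g)\|\le T\}\le (2C_4T+2C_5+1)^2\,\#\{h\in Q:\|\mu(h)\|\le C_4T+C_5\}.
\]
This inequality has the constant $C_4$ inside the argument of the count.

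\textbf{Step 3: removing $C_4$ (main obstacle).} A multiplicative rescaling of $T$ is not harmless at the level of counts. So rather than estimating crudely through the maximum, we use the additive form of Step 1 more carefully: $\|\mu(h)\|\le\|\mu(g)\|+O(1)$. This holds because $\mu(w^s)$ and $\mu(w^r)$ lie in $\fa^+$, and for $x,y\in\fa^+$ we have $\langle x,y\rangle\ge 0$, so $\|x+y\|\ge\|x\|$. Therefore $\|\mu(h)\|\le T+C$, and the $T^2$ bound on pairs $(r,s)$ still holds. The additive constant $C$ is then absorbed in one of two ways. One option is to enlarge $Q$: multiply each representative by bounded elements, which changes neither the double cosets nor the multiplicity up to a constant. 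The other option is to compare counts at $T$ and $T+C$ directly.

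This comparison is the delicate step. The pure counting function need not satisfy a doubling-type bound at the additive scale. If needed, we instead use the decomposition $g=w^s h w^r$ with $h$ chosen among the representatives so that $\|\mu(h)\|\le\|\mu(g)\|-C$, by shifting $h$ within its double coset by a bounded amount. Alternatively, we replace $T$ by $T-C$ and use that counts in balls of radius $T$ and $T-C$ are comparable for Anosov groups, by purely exponential growth up to constants (Sambarino; Quint). Either route yields \eqref{exponent-ineq1} with some $c>0$.
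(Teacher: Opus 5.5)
Your Steps 1--2, together with the additive bound $\|\mu(h)\|\le\|\mu(g)\|+O(1)$ that Step 3 derives from acuteness of $\mathfrak a^+$, follow the paper's argument. You use the same $Q$ from Lemma \ref{double-coset}, the same uniform additivity $\mu(w^shw^r)=\mu(w^s)+\mu(h)+\mu(w^r)+O(1)$ via Proposition \ref{Grprod}, and the same $O(T^2)$ count of pairs $(r,s)$. This gives
\[
A(T)\le C_2T^2B(T+L)+C_2T,\qquad A(T):=\#\{g\in\Gamma:\|\mu(g)\|\le T\},\quad B(T):=\#\{h\in Q:\|\mu(h)\|\le T\}.
\]
Your max-type bound is in fact the right justification here. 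The paper's intermediate display, which uses the sum $\|\mu(w^r)\|+\|\mu(w^s)\|+\|\mu(f)\|$, would need a reverse triangle inequality. One small correction: Lemma \ref{double-coset} says nothing for $h=e$, and indeed $\mu(w^sw^r)$ is not $\mu(w^s)+\mu(w^r)+O(1)$ when $r,s$ have opposite signs. So the double coset $\langle w\rangle$ must be counted separately, as the paper does with the $C_2T$ term. Taking $e$ as its representative in $Q$ also keeps the left side of \eqref{exponent-ineq1} positive for small $T$.

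The genuine weak point is the last step, which the paper dispatches with ``absorbing the fixed additive error $L$''. You are right that this requires $A(T)\le C'A(T-L)$, but two of your three remedies fail.
\begin{itemize}
\item Multiplying representatives by bounded elements generally moves them into other double cosets. The enlarged set is then no longer a set of double coset representatives, and Theorem \ref{m2} uses injectivity of $f\mapsto fwf^{-1}$ on $Q$.
\item The representative cannot be chosen depending on $g$, since $Q$ is fixed. Moreover, for $g=h\in Q$ the requirement $\|\mu(h)\|\le\|\mu(g)\|-C$ is impossible.
\item Counting asymptotics for Cartan-norm balls would work, but this is a far heavier input than needed. Such results are typically proved under Zariski density, which the appendix does not assume.
\end{itemize}

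An elementary argument suffices. Fix $k$, and for $|g|\ge k$ let $g'$ be the vertex of a word geodesic $[e,g]$ at distance $k$ from $g$; set $s=g'^{-1}g$. Proposition \ref{Grprod} gives $\|\mu(g)-\mu(g')-\mu(s)\|\le L_0$, and the Borel--Anosov condition gives $\alpha(\mu(s))\ge k/C-C$ for every simple root $\alpha$. Let $\rho^\vee$ be defined by $\alpha(\rho^\vee)=1$ for all simple $\alpha$. Writing $x=m\rho^\vee+x'$ with $m=\min_\alpha\alpha(x)$ and $x'\in\mathfrak a^+$, acuteness yields $\langle x,u\rangle\ge c_0\min_\alpha\alpha(x)$ for all $x\in\mathfrak a^+$ and unit $u\in\mathfrak a^+$. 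Here $c_0=\min_u\langle\rho^\vee,u\rangle>0$, because $\rho^\vee\in\operatorname{int}\mathfrak a^+$ and $\mathfrak a^+$ is acute. Taking $u=\mu(g')/\|\mu(g')\|$, we get
\[
\|\mu(g)\|\ge\langle\mu(g')+\mu(s),u\rangle-L_0\ge\|\mu(g')\|+c_0(k/C-C)-L_0\ge\|\mu(g')\|+L
\]
for $k$ large. Since $g\mapsto g'$ is at most $\#\{s:|s|=k\}$-to-one, $A(T)\le C'A(T-L)$ for $T\ge L$. Combining this with the display above, evaluated at $T-L$, gives \eqref{exponent-ineq1}.
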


\begin{proof}
Let $Q\subset\Gamma$ be the set of double coset representatives provided by
Lemma \ref{double-coset}. Since $Q$ is a set of representatives for
$\langle w\rangle\backslash
    \big(\Gamma\smallsetminus \langle w\rangle\big)/
    \langle w\rangle$
any element of $\Gamma\smallsetminus\langle w\rangle$ can be written uniquely in the
form
$$
    w^r f w^s,
    \qquad r,s\in\mathbb Z,\quad f\in Q.
$$

By Lemma \ref{double-coset}, there exists $C>0$ such that, for any
$r,s\in\mathbb Z$ and any $f\in Q$,
$$
    \big||w^r f w^s|-|f|-|w^r|-|w^s|\big|\leq C.
$$
Applying Proposition \ref{Grprod}, we obtain a constant $L>0$ such that
\begin{equation}\label{exponent-ineq4}
    \sup_{r,s\in\mathbb Z}
    \sup_{f\in Q}
    \big\|
    \mu(w^r f w^s)-\mu(f)-\mu(w^r)-\mu(w^s)
    \big\|
    \leq L.
\end{equation}

Since $\langle w\rangle$ is
quasi-isometrically embedded, there exists $C_1>0$ such that, for all $T\geq 1$,
$$
    \#\{r\in\mathbb Z:\|\mu(w^r)\|\leq T\}\leq C_1T.
$$
Using \eqref{exponent-ineq4}, we obtain, after enlarging constants if necessary,
\begin{multline*}
    \#\{g\in\Gamma:\|\mu(g)\|\leq T\}
    \leq
    \#\{r\in\mathbb Z:\|\mu(w^r)\|\leq T\} \\
    \quad+
    \#\Big\{
    (r,s,f)\in\mathbb Z^2\times Q:
    \|\mu(w^r)\|+\|\mu(w^s)\|+\|\mu(f)\|\leq T+L
    \Big\}  \\
    \leq
    C_2T^2
    \#\{f\in Q:\|\mu(f)\|\leq T+L\}+C_2T .
    \end{multline*}

Absorbing the fixed additive error $L$ and the lower-order term into the polynomial
factor proves \eqref{exponent-ineq1}. 
\end{proof}

\begin{proof}[Proof of Theorem \ref{m2}]
Since $\Gamma$ is torsion-free and hyperbolic and $N$ is not virtually
cyclic, there exists a primitive element $w\in N$; see, for example, the
proof of \cite[Theorem 7.7]{IKapovich}.

Let $Q$ be the set of double coset representatives of
$\langle w\rangle$ given by Lemma \ref{exp-coset}.
Since $\Gamma$ is torsion-free hyperbolic and
$w$ is primitive, the centralizer of $w$ is $\langle w\rangle$.  Hence
the map $Q\to \Gamma$ given by $ f\mapsto fw f^{-1} $
is injective.
 Moreover, by subadditivity of the Cartan projection and by
$\|\mu(f^{-1})\|=\|\mu(f)\|$, we have
$$
    \|\mu(fw f^{-1})\|
    \leq
    2\|\mu(f)\|+\|\mu(w)\|.
$$
Hence, for any $T\geq 1$,
$$
    \#\{h\in N:\|\mu(h)\|\leq 2T+\|\mu(w)\|\}
    \geq
    \#\{f\in Q:\|\mu(f)\|\leq T\}.
$$
Using \eqref{exponent-ineq1}, we obtain
$$
    \#\{h\in N:\|\mu(h)\|\leq 2T+\|\mu(w)\|\}
    \geq
 {c}{T^{-2}}
    \#\{g\in\Gamma:\|\mu(g)\|\leq T\}.
$$
Taking logarithms, dividing by $2T+\|\mu(w)\|$, and passing to the upper limit gives $\delta_N\geq \delta_\Gamma/2 $.
\end{proof}

We remark that  estimates similar to Theorem \ref{m2} have been established for the growth of confined subgroups in discrete subgroups of isometry groups of Gromov hyperbolic and $\textup{CAT}(0)$ spaces, see \cite[Theorem 1.3]{CGYZ24}. We would like to thank Inhyeok Choi for pointing out to us the paper \cite{CGYZ24}.


\begin{thebibliography}{10}

\bibitem{AbelsMargulisSoifer}
Herbert Abels, Gregory~A. Margulis, and Gregory~A. So\u{i}fer.
\newblock Semigroups containing proximal linear maps.
\newblock {\em Israel J. Math.}, 91(1-3):1--30, 1995.

\bibitem{burger_Inv}
Norbert A'Campo and Marc Burger.
\newblock R\'{e}seaux arithm\'{e}tiques et commensurateur d'apr\`es {G}. {A}.
  {M}argulis.
\newblock {\em Invent. Math.}, 116(1-3):1--25, 1994.

\bibitem{Benoist96}
Yves Benoist.
\newblock Actions propres sur les espaces homog\`enes r\'eductifs.
\newblock {\em Ann. of Math. (2)}, 144(2):315--347, 1996.

\bibitem{Benoist97}
Yves Benoist.
\newblock Propri\'et\'es asymptotiques des groupes lin\'eaires.
\newblock {\em Geom. Funct. Anal.}, 7(1):1--47, 1997.

\bibitem{CGYZ24}
Inhyeok Choi, Ilya Gekhtman, Wenyuan Yang, and Tianyi Zheng.
\newblock Confined subgroups in groups with contracting elements.
\newblock Preprint, {arXiv}:2405.09070, 2024.

\bibitem{ShapiraEtAll25}
R\'emi Coulon, Rhiannon Dougall, Barbara Schapira, and Samuel Tapie.
\newblock Twisted {P}atterson-{S}ullivan measures and applications to
  amenability and coverings.
\newblock {\em Mem. Amer. Math. Soc.}, 305(1539):v+93, 2025.

\bibitem{DeyHurtado25}
Subhadip Dey and Sebastian Hurtado.
\newblock Remarks on discrete subgroups with full limit sets in higher rank
  {L}ie groups.
\newblock {\em Int. Math. Res. Not. IMRN}, (17):Paper No. rnaf268, 26, 2025.

\bibitem{DKL19}
Subhadip Dey, Michael Kapovich, and Bernhard Leeb.
\newblock A combination theorem for {A}nosov subgroups.
\newblock {\em Math. Z.}, 293(1-2):551--578, 2019.

\bibitem{DKO_AR}
Subhadip Dey, Dongryul~M. Kim, and Hee Oh.
\newblock Ahlfors regularity of {P}atterson-{S}ullivan measures of {A}nosov
  groups and applications.
\newblock {\em Compos. Math.} Vol 162 (2026), 743-791.


\bibitem{DeyOh25}
Subhadip Dey and Hee Oh.
\newblock Deformations of {A}nosov subgroups: limit cones and growth
  indicators.
\newblock {\em J. Lond. Math. Soc. (2)}, 112(3):Paper No. e70280, 35, 2025.


\bibitem{ELO_anosov}
Samuel Edwards, Minju Lee, and Hee Oh.
\newblock Anosov groups: local mixing, counting and equidistribution.
\newblock {\em Geom. Topol.}, 27(2):513--573, 2023.

\bibitem{FockGoncharov11}
Vladimir Fock and Alexander Goncharov.
\newblock Moduli spaces of local systems and higher {T}eichm\"uller theory.
\newblock {\em Publ. Math. Inst. Hautes \'Etudes Sci.}, (103):1--211, 2006.

\bibitem{GTa}
Olivier Glorieux and Samuel Tapie.
\newblock Critical exponents of normal subgroups in higher rank.
\newblock {\em (arXiv:2006.05730)}.

\bibitem{GuichardWienhard12}
Olivier Guichard and Anna Wienhard.
\newblock Anosov representations: domains of discontinuity and applications.
\newblock {\em Invent. Math.}, 190(2):357--438, 2012.

\bibitem{IKapovich}
Ilya Kapovich.
\newblock A non-quasiconvexity embedding theorem for hyperbolic groups.
\newblock {\em Math. Proc. Camb. Philos. Soc.}, 127(3):461--486, 1999.

\bibitem{KLP17}
Michael Kapovich, Bernhard Leeb, and Joan Porti.
\newblock Anosov subgroups: dynamical and geometric characterizations.
\newblock {\em Eur. J. Math.}, 3(4):808--898, 2017.

\bibitem{KLP25}
Michael Kapovich, Bernhard Leeb, and Joan Porti.
\newblock Morse actions of discrete groups on symmetric spaces: local-to-global
  principle.
\newblock {\em Geom. Topol.}, 29(5):2343--2390, 2025.

\bibitem{kassel}
Fanny Kassel.
\newblock Deformation of proper actions on reductive homogeneous spaces.
\newblock {\em Math. Ann.}, 353(2):599--632, 2012.

\bibitem{kassel-potrie}
Fanny Kassel and Rafael Potrie.
\newblock Eigenvalue gaps for hyperbolic groups and semigroups.
\newblock {\em J. Mod. Dyn.}, 18:161--208, 2022.

\bibitem{KMO_tent}
D.~M. Kim, Y.~Minsky, and H.~Oh.
\newblock Tent property of the growth indicator functions and applications.
\newblock {\em Geom. Dedicata}, 218(1):Paper No.14, 18, 2024.

\bibitem{KOW_indicators}
Dongryul~M. Kim, Hee Oh, and Yahui Wang.
\newblock Properly discontinuous actions, growth indicators, and conformal
  measures for transverse subgroups.
\newblock {\em Math. Ann.}, 393(2):2391--2450, 2025.

\bibitem{Labourie06}
Fran\c{c}ois Labourie.
\newblock Anosov flows, surface groups and curves in projective space.
\newblock {\em Invent. Math.}, 165(1):51--114, 2006.

\bibitem{lahn2025}
Max Lahn.
\newblock Reducible suspensions of {A}nosov representations.
\newblock {\em (arXiv:2312.09886), Groups Geom. Dyn.}
\newblock To appear.

\bibitem{LeeOh23}
Minju Lee and Hee Oh.
\newblock Invariant measures for horospherical actions and {A}nosov groups.
\newblock {\em Int. Math. Res. Not. IMRN}, (19):16226--16295, 2023.



\bibitem{Quint02}
Jean-Fran\c{c}ois Quint.
\newblock Divergence exponentielle des sous-groupes discrets en rang
  sup\'erieur.
\newblock {\em Comment. Math. Helv.}, 77(3):563--608, 2002.

\bibitem{Quint_indicator}
Jean-Fran\c{c}ois Quint.
\newblock L'indicateur de croissance des groupes de {S}chottky.
\newblock {\em Ergodic Theory Dynam. Systems}, 23(1):249--272, 2003.

\bibitem{Quint_Schottky}
Jean-Fran\c{c}ois Quint.
\newblock Groupes de {S}chottky et comptage.
\newblock {\em Ann. Inst. Fourier (Grenoble)}, 55(2):373--429, 2005.

\bibitem{Roblin05}
Thomas Roblin.
\newblock Un th\'eor\`eme de {F}atou pour les densit\'es conformes avec
  applications aux rev\^etements galoisiens en courbure n\'egative.
\newblock {\em Israel J. Math.}, 147:333--357, 2005.

\bibitem{Sambarino14}
Andr\'{e}s Sambarino.
\newblock Hyperconvex representations and exponential growth.
\newblock {\em Ergodic Theory Dynam. Systems}, 34(3):986--1010, 2014.

\bibitem{sambarino2022report}
Andr\'{e}s Sambarino.
\newblock A report on an ergodic dichotomy.
\newblock {\em Ergodic Theory Dynam. Systems}, 44(1):236--289, 2024.


\bibitem{Tits_classification}
Jacques Tits.
\newblock Classification of algebraic semisimple groups.
\newblock {\em Proc. Sympos. Pure. Math.}, 9:33--62, 1966.

\bibitem{Tits_free}
Jacques Tits.
\newblock Free subgroups in linear groups.
\newblock {\em J. Algebra}, 20:250--270, 1972.



\end{thebibliography}
\end{document}